\documentclass[twoside,leqno]{article}

\usepackage[letterpaper]{geometry}

\usepackage{siamproceedings}

\usepackage{color}
\usepackage[T1]{fontenc}
\usepackage{amsfonts}
\usepackage{amssymb}
\usepackage{graphicx}
\usepackage{epstopdf}
\usepackage{enumitem}
\usepackage{algorithmic}
\ifpdf
  \DeclareGraphicsExtensions{.eps,.pdf,.png,.jpg}
\else
  \DeclareGraphicsExtensions{.eps}
\fi

\newsiamremark{remark}{Remark}
\newsiamthm{counterexample}{Counterexample}
\newsiamremark{hypothesis}{Hypothesis}
\crefname{hypothesis}{Hypothesis}{Hypotheses}

\newsiamthm{claim}{Claim}
\crefname{claim}{Claim}{Claim}
\newsiamthm{fact}{Fact}
\crefname{fact}{Fact}{Fact}

\newsiamremark{question}{Question}
\newsiamremark{problem}{Problem}

\usepackage{amsopn}

\newcommand\Diff{\operatorname{Diff}}
\newcommand\Prob{\mathbb P}
\newcommand\Proberg{\Prob_{\rm erg}}
\newcommand\hTOP{h_{\rm TOP}}
\newcommand\htop{h_{\rm top}}
\newcommand\RR{\mathbb R}
\newcommand\MME{\operatorname{MME}}
\newcommand\Probhyp{\Prob_{\rm hyp}}

\newcommand\HC{\operatorname{HC}}
\newcommand\homrel{\stackrel{HC}\sim}

\begin{document}

\newcommand\relatedversion{}
\renewcommand\relatedversion{\thanks{The full version of the paper can be accessed at \protect\url{https://arxiv.org/abs/0000.00000}}} 

\title{{\Large Chaos on surfaces and beyond: a new notion of dynamical hyperbolicity}\\$ $\\
\small (text prepared for the International Congress of Mathematicians 2026)}
    \author{J\'er\^ome Buzzi\thanks{C.N.R.S. \& Universit\'e Paris-Saclay (\email{jerome.buzzi@universite-paris-saclay.fr}, \url{https://imo.universite-paris-saclay.fr/~buzzi).}}
}

\date{}

\maketitle


\fancyfoot[R]{\scriptsize{Preprint version}}





\begin{abstract}
We present some developments in the study of chaotic dynamics following the solution of a conjecture of Newhouse on the measures maximizing the entropy of smooth surface diffeomorphisms.  We focus on \emph{strong positive recurrence}, a generalization of the classical Anosov-Smale theory of uniform hyperbolicity introduced in a joint work with Sylvain Crovisier and Omri Sarig. This new property is general enough to be satisfied by all smooth surface diffeomorphisms with positive entropy, yet it still ensures many quantitative properties such as exponential mixing or limit theorems for regular functions. We also present some open problems, including its abundance (or not) in higher dimensions.
\end{abstract}

\noindent
{\it Keywords and phrases:} smooth ergodic theory; symbolic dynamics; entropy; Lyapunov exponent; homoclinic class; measure maximizing the entropy; strong positive recurrence; exponential mixing; almost sure invariance principle; effective intrinsic ergodicity.

\medbreak

\noindent
{\it 2020 Mathematics Subject Classification.} 37C40, 28D20, 37A25, 37B10, 37D25, 37D35, 37E30.

\section{Introduction.}
This text is devoted to the presentation of some recent developments in the ergodic theory of hyperbolic dynamics for a general mathematical readership with a passing familiarity with dynamics. 
We present investigations motivated the solution \cite{BCS-MME} of Newhouse conjecture about measures maximizing the entropy for smooth surface diffeomorphisms. 

Our focus will be on \emph{Strong Positive Recurrence} (or SPR) for diffeomorphisms, a new notion of hyperbolicity  introduced in \cite{BCS-SPR} and involving hyperbolicity, Lyapunov exponents, and Kolmogorov-Sina\u\i\  entropy.

Unless otherwise specified, the results are joint with Sylvain Crovisier and Omri Sarig. I do not claim any exhaustivity and I apologize to the many colleagues whose important work I omitted.

\subsection{Invariant measures and entropies.}
We recall some basic terminology and introduce our notations, referring to \cite{Katok-Hasselblatt} for definitions and background (see also \cite{Downarowicz-book} about dynamical entropies).
 
For a Borel map $f:M\to M$, $\Prob(f)$ denotes the set of $f$-invariant Borel probability measures. If the space $M$ is a compact metric space, then $\Prob(f)$ equipped with the weak topology (that is, the weak star topology induced by the continuous functions on $M$) is also compact and metrizable.

The idea of entropy is fundamental in dynamics. It gives rise to many notions which can be defined following the same general scheme. One first defines an entropy at some scale $\epsilon>0$ as the growth exponent $\limsup_{n\to\infty} \frac1n\log r(\epsilon,n)$ of some numbers $r(\epsilon,n)$ of ``relevant'' orbit segments of length $n$ that can be distinguished at the scale $\epsilon$. The entropy is then $h=\lim_{\epsilon\to0} h(\epsilon)$. This scheme can be applied to the two following classical dynamical entropies. 

The \emph{topological entropy} $\htop(f)$ for a continuous map $f$ is obtained by considering \emph{all} orbit segments to be relevant in the previous scheme. Focusing on sets of orbit segments representative with respect to some measure, one obtains the \emph{Kolmogoro-Sinai  entropy} $h(f,\mu)$ for endomorphisms $f$ of probability spaces defined by a measure~$\mu$. Given a Borel map $f$ on some Borel space $M$, this induces a function $h(f,\cdot):\Prob(f)\to[0,\infty]$. One considers the top entropy and the measures achieving it (called \emph{measures maximizing the entropy} or MMEs):
 $$
    \hTOP(f):= \sup_{\nu\in\Prob(f)} h(f,\nu) \text{ and }\MME(f) := \left\{\mu\in\Proberg(f) : h(f,\mu) =\hTOP(f) \right\}
 $$
(one restricts to $\Proberg(f)$ since a measure has maximal entropy if and only if almost all its ergodic components have).

When $f$ is  continuous on a compact metric space $M$, the \emph{variational principle} identifies the supremum $\hTOP(f)$ with the topological entropy $\htop(f)$.
Thus one can understand measures maximizing the entropy as ``seeing the whole complexity of the map'' and they often satisfy some  equidistribution properties.  
A theorem of Newhouse \cite{Newhouse-Continuity} ensures that all $C^\infty$ maps $f$ on compact manifolds have $\MME(f)\ne\emptyset$ (even though this may fail in finite differentiability \cite{Misiurewicz-NoMax,Buzzi-NoMax}).

\medskip

\paragraph{Standing assumptions and notations.} 
$M$ is a manifold of dimension $d\ge2$. When we want to recall its dimension we write $M^d$. All manifolds are compact, boundaryless, and equipped with a Riemannian structure. We denote by $\|\cdot\|$ the corresponding norms on the tangent spaces $T_xM$, $x\in M$, as well as the induced operator norms. The set of $C^r$-diffeomorphisms of $M$ is written as $\operatorname{Diff}^r(M)$. It is equipped with the usual topology ($r\ge1$ and often $r=\infty$; when $r=k+\alpha$ with $0<\alpha<1$, then the diffeomorphisms are $k$ times continuously differentiable and its $k$th order differential is H\"older-continuous with exponent $\alpha$). We write $C^{1+}$ or $\Diff^{1+}(M)$ to denote the $C^{1+\alpha}$ maps or diffeomorphisms for some unspecified $\alpha$. A subset $X\subset M$ is invariant under a map $f$ if $f(X)=X$. We can then define the restriction $f:X\to X$ denoted by $f|_X$.

\subsection{Outline of the paper.} 
We start in Section 2 by recalling the Anosov-Smale theory of uniform hyperbolicity and its motivation by  attempts to describe most or typical dynamics. We then review some more recent notions of hyperbolicity, so that we can introduce Strong Positive Recurrence (SPR) in a natural way.

In Section 3, we will discuss SPR for smooth surface diffeomorphisms, explaining the relation with a continuity property of exponents proved in \cite{BCS-ECLE}.

In Section 4, we introduce the key notion of Borel homoclinic class from \cite{BCS-MME}. It is at this level that one has a general uniqueness result of the MME and it is also there that one is able to characterize finely the SPR property and build appropriate codings. 

This will lead us to the ``symbolic ancestor'', i.e., the SPR property for Markov shifts which gives its name to the new property and also is the main tool that allow us to explore the consequences in Section 5.

In Section 6 we present some well-studied classes of diffeomorphisms that we can be easily shown to be SPR (based on previous works).
 
Finally in section 7, we select some open problems that we believe to be relevant to the development of the theory.

\section{Hyperbolicity.}
In this section, we start by recalling the problem of structural stability for dissipative diffeomorphisms of compact manifolds and how it motivated the study of hyperbolicity. We define various types of dynamical hyperbolicity, first following  the classical works of Anosov and Smale, then giving some elements of the theory of Pesin and finally of Young towers. We see how they  naturally lead to Strong Positive Recurrence (or SPR).

\subsection{Structural stability.}
Andronov and Pontryagin argued in a celebrated 1937 paper \cite{Andronov-Pontryagin1937}  that, since one cannot exactly know the laws of evolution of physical systems, the diffeomorphisms $f:M\to M$ modelling them should be chosen \emph{structurally stable}: using some relevant topology, any small perturbation $g$ of $f$ remains topologically conjugate to $f$, i.e., there is a homeomorphism $h:M\to M$ such that $f = h^{-1}\circ g\circ f$.

A weaker notion is defined in terms of the chain recurrent set $\mathcal R(f)$, i.e., the set of points such that, for every $\epsilon>0$, there is an $\epsilon$-pseudo-orbit from $x$ to $x$, see  \cite[\S2.3]{Robinson-book}. One says that $f$ is \emph{$C^r$ $\mathcal R$-stable} if for every $C^r$ small perturbation $g$ of $f$ in the space of all diffeomorphisms, the restrictions $f|_{\mathcal R(f)}$ and $g|_{\mathcal R(g)}$ are topologically conjugate.

Such questions of stability were revived by Smale \cite{Smale1967} who, for a short period of time, conjectured \cite{Smale-Mexicana} that typical dissipative dynamical systems should have a very simple dynamics to a finite number of hyperbolic periodic orbits  and in particular be structurally stable.

\subsection{Lyapunov exponents and Pesin blocks.}
Loosely speaking, a diffeomorphism $f$ is \emph{nonuniformly hyperbolic} when all (relevant) tangent vectors $v\in TM$ admit non-zero \emph{(pointwise) Lyapunov exponents}:
 $$
    \lambda(f,v):=\lim_{n\to\infty} \frac1n\log\|Df^n.v\| \ne 0 \quad
    (\|\cdot\| \text{ denotes the Euclidean norms induced by the Riemannian structure}).
 $$
The above limits do not exist everywhere, but the multiplicative ergodic theorem of Oseledets \cite{Katok-Hasselblatt} ensures their existence for all $v\in T_xM$ for almost every point $x\in M$ with respect to any invariant probability measure (the Oseledets-regular points). More precisely, this theorem gives a basis $(v_1,\dots,v_d)$ of $T_xM$ and $d$ numbers $\lambda^1(f,x)\ge\lambda^2(f,x)\ge\dots\ge\lambda^d(f,x)$ such that $\lambda^i(f,x)=\lim_{n\to\pm\infty} \frac1n\log\|Df^n.v_i\|$. For $\mu\in\Prob(f)$, we consider the \emph{average exponents}:
 $$
   \lambda^i(f,\mu) := \int_M \lambda^i(f,x)\, d\mu(x) \qquad (i=1,\dots,d)
 $$
When $\mu$ is ergodic, $\lambda^f(i,\mu)$ is the value of $\lambda^i(f,x)$ for $\mu$-a.e. $x\in M$.

The \emph{unstable space} at $x\in M$ is the vector subspace $E_x^+\subset T_xM$ generated by all $v_i$'s with $\lambda(f,v_i)>0$. The \emph{stable space} $E_x^-$ is defined similarly by the $v_i$'s with $\lambda(f,v_i)<0$. These spaces depend measurably on $x\in M$.

In general, the limit defining the Lyapunov exponent is not uniform with respect to the base point of the tangent vector $v$.
A measure of this (lack of)  uniformity  is to consider how the following sets fill the space:

\begin{definition}
A \emph{Pesin block} is a nonempty Borel set $\Lambda\subset M$ such that,
for some numbers $0<\epsilon<\chi$ and some number $C>0$, the following hold for every $x\in\Lambda$:
 $$
   \exists E\oplus F = T_xM \quad
    \forall k\in\mathbb Z\;\forall n\in\mathbb N\;
       \max\left(\|Df^n|Df^k(E)\|, \|D_xf^{-n}|Df^k(F)\|\right) \le C e^{\epsilon |k|} e^{-n\chi}.
 $$
We denote by $\Lambda(\chi,\epsilon,C)$  the set of all points $x\in M$ satisfying the above.
\end{definition}

In other words, points in a given Pesin block, though they may fail to have well-defined exponents, satisfy $(C,\chi)$-uniform expansion/contraction estimates. Moreover this uniformity only worsens $\epsilon$-slowly along the orbits.

\medbreak

We note that any Pesin block is contained in a standard Pesin block $\Lambda(\chi,\epsilon,C)$. Moreover,  each $\Lambda(\chi,\epsilon,C)$ is compact and is approximately invariant: $f^{\pm1}(\Lambda(\chi,\epsilon,C))\subset \Lambda(\chi,\epsilon,e^\epsilon C)$.
We also note that the splitting $E\oplus F$ is unique when it exists since $0<\epsilon<\chi$. In fact $E=E_x^-$ and $F=E_x^+$ if $x$ is Oseledets-regular. 

\subsection{Anosov-Smale uniform hyperbolicity.}

Inspired by key examples such as geodesic flows  with negative curvature, Anosov, Smale, Newhouse and others introduced notions of uniform hyperbolicity. We note the following two (see, e.g., \cite{Robinson-book}):

\begin{definition}
The diffeomorphism $f$ is \emph{Anosov} if the whole manifold $M$ is a Pesin block.
It is \emph{$\mathcal R$-hyperbolic}  if its chain recurrent set is a Pesin block. 
\end{definition}

Let us give three examples. 
(1) Compactify the action of $x\mapsto 2x$ on $\mathbb R^d$ to obtain a uniformly hyperbolic diffeomorphism of the $d$-sphere whose limit set is reduced to the two hyperbolic fixed points corresponding to $0$ (repelling) and to $\infty$ (attracting). 
(2) Quotient the action of $(x,y)\mapsto(2x+y,x+y)$ by $\mathbb Z^2$ to obtain an Anosov diffeomorphism on the torus $\mathbb R^2/\mathbb Z^2$. 
(3) Smale constructed ``by hand'' the so-called \emph{Smale horseshoe}, a uniformly hyperbolic diffeomorphism  on the sphere whose limit set is a Cantor set (up to finitely many hyperbolic periodic orbits).
See \cref{fig:ex-unif-hyp}.

\begin{figure}[htbp]
  \centering
  \includegraphics[width=0.2\columnwidth]{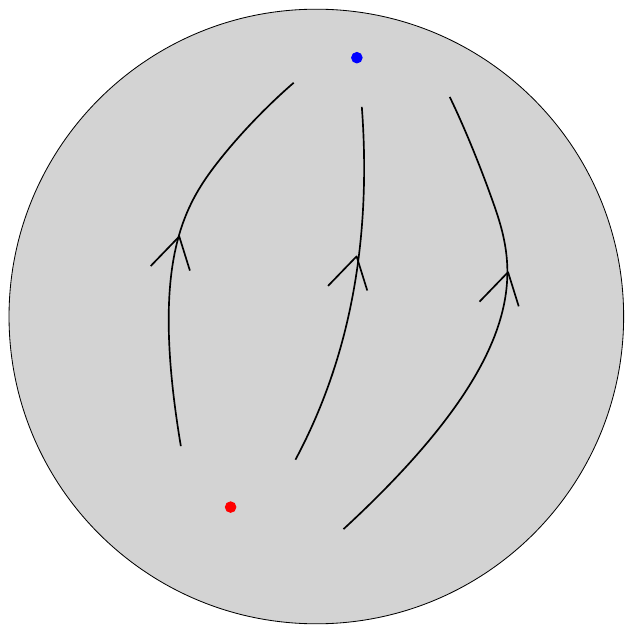}
  \hspace{0.8cm}
  \includegraphics[width=0.11\columnwidth]{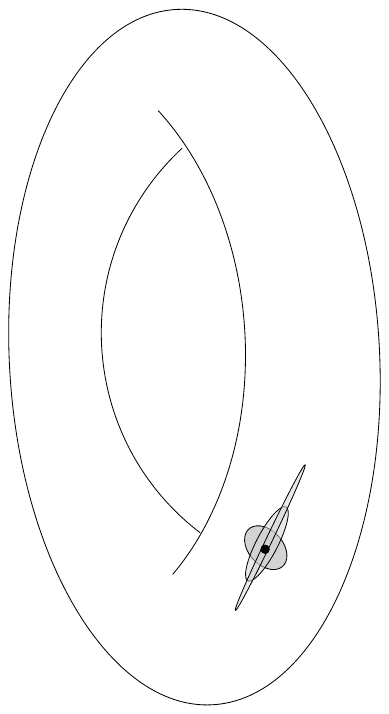}
  \hspace{0.8cm}
  \includegraphics[width=0.2\columnwidth]{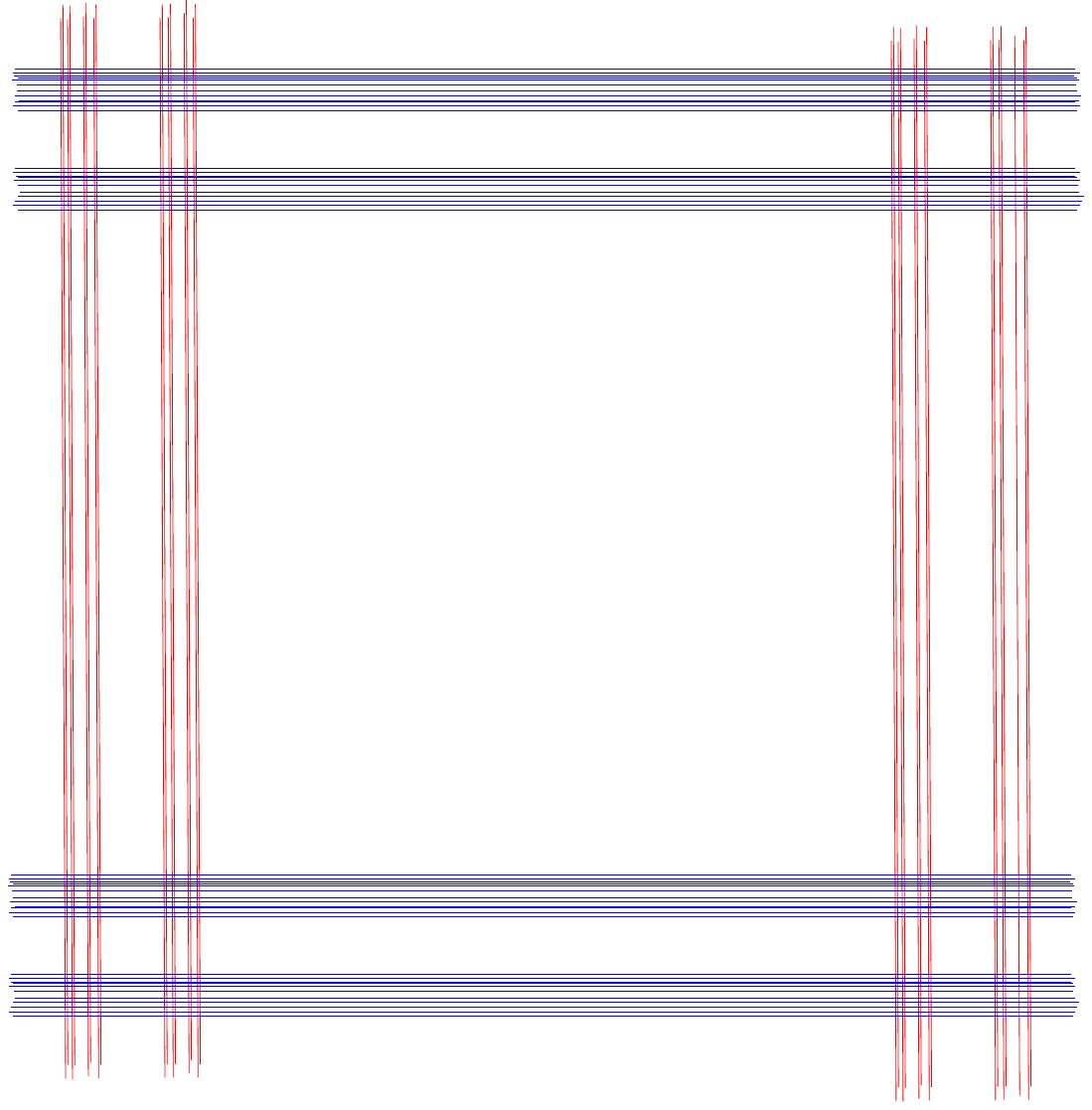}
  \caption{From left to right: (1) Morse-Smale dynamics on a sphere with two fixed points (a  source and a sink); (2) Toral automorphism with a few iterates of a small ball around $0$; (3) Smale's horsehoes with its local stable (horizontal, blue) and unstable (vertical red) manifolds.}
  \label{fig:ex-unif-hyp}
\end{figure}

Uniform hyperbolicity is a classical and rather complete theory. Such dynamics\footnote{To avoid triviality we assume from now on that the recurrent set is infinite.}  exhibit very rich behavior which is both chaotic at the individual level and strongly structured at the global level.
For instance, on the one hand, individual orbits are ``strongly instable'': there is $\epsilon_0>0$ such that, for any two distinct points $x,y$ in $\mathcal R(f)$, $d(f^n(x),f^n(y))>\epsilon_0$ for some $n\in\mathbb Z$. Moreover, there is an exponentially large number of well-separated orbit segments in the sense that th topological entropy $\htop(f)$ is positive.

On the other hand, the Artin-Mazur zeta functions
 \begin{equation}\label{eqZeta}
     \zeta_f(z) := \exp \left( \sum_{n\ge1} \frac{z^n}{n} \#\operatorname{Fix}(f,n)\right)
     \text{ where } \operatorname{Fix}(f,n) \text{ is the set of isolated points in }\{x\in M: f^n(x)=x\} 
  \end{equation}
which in general can be purely formal power series \cite{Kaloshin-zeta},
have, in the uniformly hyperbolic case, analytic extension to rational functions (implying very strong constraints on the  number of periodic orbits).

\medskip

Palis and Smale \cite{Palis-Smale-1970} conjectured and Robinson \cite{Robinson-1976} demonstrated   stability theorems. Even more strikingly, uniform hyperbolicity and structural stability coincide \cite{Mane-Stability} (see \cite[\S9.9 and \S10.4]{Robinson-book} for this version):

\begin{theorem}[Ma\~n\'e]
A diffeomorphism of a compact manifold is $\mathcal R$-hyperbolic if and only if it is $\mathcal R$-stable with respect to the $C^1$ topology. 
\end{theorem}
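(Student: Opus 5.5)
The plan is to prove the two implications separately. The direction \emph{$\mathcal R$-hyperbolic $\Rightarrow$ $\mathcal R$-stable} is the classical one (Palis--Smale conjecture, Robinson's theorem), and the converse \emph{$C^1$ $\mathcal R$-stable $\Rightarrow$ $\mathcal R$-hyperbolic} is Ma\~n\'e's stability theorem. Both are deep, so the proposal only sketches the architecture, and I would largely defer to \cite{Robinson-book} for the technical details.

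For hyperbolic $\Rightarrow$ stable, I would first use the spectral decomposition. Since $\mathcal R(f)$ is a Pesin block, i.e. uniformly hyperbolic, closing and shadowing show that periodic points are dense in $\mathcal R(f)$. Local product structure then splits $\mathcal R(f)$ into finitely many basic sets $\Lambda_1,\dots,\Lambda_k$. Next I would check the absence of cycles: a cycle among basic sets would, by chain recurrence, merge them, which contradicts the decomposition. So the basic sets carry a filtration (a Lyapunov function). For $g$ $C^1$-close to $f$, each $\Lambda_i$ has a hyperbolic continuation $\Lambda_i(g)$, conjugate to $\Lambda_i$ by the shadowing lemma (structural stability of hyperbolic sets). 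The filtration persists, which forces $\mathcal R(g)\subset \bigcup_i \Lambda_i(g)$: chain recurrent points of $g$ must remain in small neighbourhoods of the $\Lambda_i$, and there they are shadowed. Gluing the local conjugacies gives $f|_{\mathcal R(f)}\simeq g|_{\mathcal R(g)}$.

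For stable $\Rightarrow$ hyperbolic, I would follow Ma\~n\'e and argue in four steps.
\begin{enumerate}
\item All periodic points of every $g$ near $f$ are hyperbolic. Otherwise, Franks' lemma perturbs a non-hyperbolic periodic orbit into one whose number of periodic points changes, breaking the conjugacy on $\mathcal R$.
\item Periodic points are dense in $\mathcal R(f)$. This uses Pugh's $C^1$ closing lemma, or its chain version, together with stability.
\item One obtains uniform estimates on periodic orbits of nearby $g$. The stable/unstable indices are locally constant, the angles between $E^s$ and $E^u$ are bounded below, and the averages $\frac1{\pi(p)}\sum\log\|Dg^{m}|E^s\|$ are uniformly negative along the period. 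Each of these is again proved by a Franks-lemma perturbation that would otherwise create a non-hyperbolic periodic point, contradicting the first step. This yields a dominated splitting on the closure of the periodic points.
\item Domination plus uniform contraction along periodic orbits is promoted to uniform hyperbolicity on the whole closure, i.e. on $\mathcal R(f)$.
\end{enumerate}

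The main obstacle is the last step, passing from periodic-orbit estimates to uniform contraction of $E^s$ on all of $\mathcal R(f)$. I would first try Ma\~n\'e's ergodic closing lemma. If $E^s$ is not uniformly contracted, some ergodic measure $\mu$ has $\int\log\|Df|E^s\|\,d\mu\ge0$. Generic points of $\mu$ can be closed by $C^1$-small perturbations into periodic orbits whose stable averages approximate that integral, and this contradicts the uniform negativity from the third step.
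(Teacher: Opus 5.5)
The paper gives no proof of this theorem; it only quotes it from Ma\~n\'e and \cite{Robinson-book}. Your sketch of $\mathcal R$-hyperbolic $\Rightarrow$ $\mathcal R$-stable is the standard argument and is fine. When you glue the conjugacies, also check that $\Lambda_i(g)\subset\mathcal R(g)$; this holds because the conjugacy carries the dense periodic points of $\Lambda_i$ to periodic points of $g$.

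The converse, however, has a genuine gap, and it sits at the core of the stability conjecture: the \emph{index problem}. In step~3 you assert that the indices are ``locally constant'' and that this follows from a Franks-lemma perturbation. There are two readings:
\begin{itemize}
\item If you mean that the continuation of a periodic orbit keeps its index, this is true. But together with Ma\~n\'e's uniform estimates it only gives a dominated splitting on each $\overline{\operatorname{Per}_i(f)}$ \emph{separately}.
\item If you mean that nearby periodic points have the same index, this does not follow from Franks' lemma. That lemma modifies the derivative along one periodic orbit at a time and gives no relation between two distinct orbits.
\end{itemize}
At a point of $\overline{\operatorname{Per}_i(f)}\cap\overline{\operatorname{Per}_j(f)}$ with $i<j$ you merely get a finer dominated splitting $E\oplus E^c\oplus F$. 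The middle bundle is expanded on average along one family of orbits and contracted along the other, and nothing here is contradictory. A contradiction needs an orbit that relates the two families, e.g.\ one spending long times near both so that a perturbation can make a central exponent vanish. That is a global, connecting-type argument, not a perturbation of the derivative along a fixed orbit. So you do not obtain a single hyperbolic splitting on $\overline{\operatorname{Per}(f)}$.

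The same issue breaks step~4. The ergodic closing lemma gives, for $g$ near $f$, a periodic orbit shadowing a $\mu$-generic orbit, but it does not control its index. If that index is smaller than $i=\dim E^s$, the continuation of $E^s$ along the new orbit contains expanded eigendirections. Your uniform bound on the averages of $\log\|Dg^m|_{E^s}\|$ concerns the orbit's \emph{own} stable bundle, so it is perfectly compatible with $\int\log\|Df^m|_{E^s}\|\,d\mu\ge0$. (Note also that you must use the same iterate $m$ as in step~3, not $Df$.)

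What is missing is a robust separation of indices: a neighbourhood of $f$ and $\delta>0$ such that, for every $g$ in it, periodic orbits of $g$ with different indices stay $\delta$ apart. This goes beyond the argument you describe. It follows from Palis's theorem ($C^1$ $\Omega$-stable $\Rightarrow$ Axiom~A without cycles). It also follows from the theorem of Aoki and Hayashi: if every $C^1$-perturbation of $f$ has only hyperbolic periodic orbits, then $f$ is Axiom~A without cycles.

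With the latter result the cleanest route is a chain of four implications:
\begin{itemize}
\item $\mathcal R$-stable $\Rightarrow$ all periodic orbits of nearby diffeomorphisms are hyperbolic (your step~1);
\item $\Rightarrow$ Axiom~A without cycles (Aoki--Hayashi);
\item $\Rightarrow$ $\mathcal R(f)=\Omega(f)$;
\item $\Rightarrow$ $\mathcal R(f)$ is hyperbolic.
\end{itemize}
This route also makes step~2 unnecessary. For $\mathcal R(f)$, as opposed to $\Omega(f)$, step~2 would otherwise require the Bonatti--Crovisier connecting lemma for pseudo-orbits, not just Pugh's closing lemma.
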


Thus, a structurally stable dynamics can be very complex. But the {\it coup de grâce} was still to come:

\begin{theorem}[Abraham-Smale \cite{Abraham-Smale-1970},  Newhouse \cite{Newhouse-PSPM-1970}, Bonatti-Diaz-Pujals \cite{Bonatti-Diaz-Pujals-Sinks}]
The set of uniformly hyperbolic diffeomorphisms is not dense in $\Diff^r(M)$ as soon as ($r\ge2$ and $\dim M\ge 2$) or ($r\ge1$ and $\dim M\ge3$).
\end{theorem}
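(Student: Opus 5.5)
The statement combines three classical results, and I would treat it as two cases matching the two hypotheses. In each case the goal is to find a nonempty open set $\mathcal U\subset\Diff^r(M)$ that contains no $\mathcal R$-hyperbolic diffeomorphism. The obstruction will be a \emph{robust} phenomenon that is incompatible with the chain recurrent set being a Pesin block in the sense of the definition above. The tool for this is Ma\~n\'e's theorem, together with the more elementary fact that $\mathcal R$-hyperbolic diffeomorphisms satisfy Axiom A and the no-cycles condition. From this it follows that an $\mathcal R$-hyperbolic $f$ has only finitely many periodic sinks, and that no basic piece of $f$ can have a tangency between its stable and unstable laminations: a tangency at a chain recurrent point would contradict the uniqueness of the splitting $E\oplus F$.

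\textbf{Case $r\ge2$, $\dim M\ge2$ (Newhouse).} I would first build, on a disc in the surface (or a surface-like region with dominated splitting in higher dimension), a $C^2$ diffeomorphism $f_0$ with a hyperbolic horseshoe $K$ whose stable and unstable Cantor sets have thickness satisfying $\tau^s(K)\tau^u(K)>1$. One way is to take a dissipative horseshoe with strong distortion, and then to create a quadratic homoclinic tangency between $W^s(K)$ and $W^u(K)$ by a local perturbation. The second step is to use the $C^2$ continuity of the thicknesses and Newhouse's gap lemma: two interleaved Cantor sets whose thickness product exceeds one must intersect. This shows that every $g$ that is $C^2$-close to $f_0$ still has a tangency between the stable and unstable laminations of the continuation $K_g$. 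That tangency lies in $\mathcal R(g)$, since it sits in the homoclinic class of $K_g$, and so $g$ is not $\mathcal R$-hyperbolic. For $\dim M>2$, I would embed this construction along a strongly normally contracting direction. The main obstacle is the thickness estimate: thickness depends on $C^2$ distortion bounds, and this is exactly where $r\ge2$ is needed.

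\textbf{Case $r\ge1$, $\dim M\ge3$.} Here I would follow two routes. In the spirit of Abraham--Smale, I would produce a robust heterodimensional cycle. The construction takes two hyperbolic sets of different stable index whose invariant manifolds intersect in a way that persists. A $C^1$-robust cycle is obtained through blenders in the Bonatti--Diaz sense. Any $\mathcal R$-hyperbolic diffeomorphism has a chain recurrent set split into finitely many basic pieces with constant index and without cycles. A persistent cycle linking pieces of different index therefore puts them in a common chain class with non-constant index, which contradicts the existence of a single continuous splitting on a Pesin block. In the spirit of Bonatti--Diaz--Pujals, one can alternatively show that $C^1$-locally generic diffeomorphisms near such a cycle have infinitely many sinks, which Axiom A forbids. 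The hardest step is the robustness of the blender, i.e.\ showing that the $cu$-strips through the blender keep meeting $W^s$ under every $C^1$-small perturbation. I would establish this by using the covering property of the blender's iterated function system on the central direction, which is an open condition.
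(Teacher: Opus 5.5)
The paper gives no proof of this statement, only the three citations. Your two main arguments are the classical ones behind them: Newhouse's thick horseshoe with a gap-lemma persistent tangency for $r\ge2$ on surfaces, and a $C^1$-robust heterodimensional cycle built from a blender for $\dim M\ge3$. You identify the contradiction correctly in both cases. A hyperbolic $\mathcal R(g)$ forces $T_xW^s\oplus T_xW^u=T_xM$ at every chain-recurrent point. This rules out a tangency, and by a dimension count it also rules out a connecting orbit between saddles of different indices.

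In the Newhouse step, say explicitly that the tangency is placed so that the two projected Cantor sets are linked with some margin, i.e.\ neither is contained in the closure of a gap of the other. That condition, not the tangency itself, is the open condition the gap lemma needs.

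Some minor points:
\begin{itemize}
\item Ma\~n\'e's theorem plays no role; only the elementary transversality fact above is used.
\item For $\dim M\ge3$ the higher-dimensional Newhouse embedding is superfluous. A nonempty $C^1$-open set $\mathcal U$ without hyperbolic elements already gives the nonempty $C^r$-open set $\mathcal U\cap\Diff^r(M)$ for every $r\ge1$.
\item Large thickness needs no distortion. An affine horseshoe with contraction slightly below $1/2$ and expansion slightly above $2$ has both thicknesses large; $C^2$ enters only through the continuity of thickness under perturbation.
\end{itemize}

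The one real error is your alternative Bonatti--Diaz--Pujals route, which you should drop. It is not true that $C^1$-generic diffeomorphisms near a robust heterodimensional cycle have infinitely many sinks. The standard blender-based cycles sit in a region with a partially hyperbolic splitting $E^s\oplus E^c\oplus E^u$, where $E^s$ and $E^u$ are nontrivial and uniformly contracted and expanded. So no periodic orbit contained in that region can be a sink or a source at all. The BDP dichotomy yields sinks or sources only for homoclinic classes that robustly admit \emph{no} dominated splitting, plus a saddle with Jacobian less than one if you want sinks rather than sources. That requires a different construction from the one you describe.
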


There are dynamics that cannot be made structurally stable by a small perturbation! So even to study generic dynamics (or just a dense set of diffeomorphisms), uniformly hyperbolic dynamics is not enough. This, together with a number of important examples, has motivated the study of hyperbolic-like dynamics ``beyond uniform hyperbolicity''.
 We refer the interested reader to \cite{BDV-book-2005,Crovisier-ast}.

\subsection{Pesin nonuniform hyperbolicity.}
This point of view is rooted in smooth ergodic theory, i.e., one studies the dynamics of typical points with respect to some invariant measure $\mu\in\Prob(f)$.

\begin{definition}[Pesin]
An invariant probability measure $\mu\in\Prob(f)$ is \emph{(nonuniformly) Pesin-hyperbolic} if, for $\mu$-a.e. $x\in M$, for all $v\in T_xM\setminus0$, $\lambda(f,v)\ne0$. It is \emph{$\chi$-hyperbolic} for some $\chi>0$ if, for $\mu$-a.e. $x\in M$, for all $1\le i\le d$, $|\lambda^i(f,x)|>\chi$. Moreover, it is of \emph{saddle type} if there are both positive and negative exponents at $\mu$-a.e. $x\in M$.
\end{definition}

Let us stress that  Pesin-hyperbolicity is a property of an invariant measure, not of the diffeomorphism. Note also that $\mu\in\Prob(f)$ is Pesin-hyperbolic if and only if almost all of its ergodic components are Pesin-hyperbolic.

It is usually a difficult question to exclude zero exponents. The following fundamental result allows to deduce hyperbolicity from  entropy for surface diffeomorphisms:

\begin{theorem}[Ruelle-Margulis]\label{thmRuelleInequ}
Let $f$ be a $C^1$ diffeomorphism of the compact manifold $M$. For any $\mu\in\Prob(f)$,
 $$
    h(f,\mu) \leq  \int_M \sum_{i=1}^d \max(\lambda^i(f,x),0)\, d\mu(x).
$$
In particular, if $M$ is a surface and $\mu$ is ergodic, then  $h(f,\mu)>0$ implies that $\mu$ is $\chi$-hyperbolic of saddle-type for any $0<\chi<h(f,\mu)$.
\end{theorem}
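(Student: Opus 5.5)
The main inequality is the Ruelle (Margulis–Ruelle) inequality. I will prove it by the classical partition-counting argument and then derive the surface corollary from it.

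By the ergodic decomposition, and because both sides are affine in $\mu$, it suffices to treat ergodic $\mu$. The right-hand side is unchanged if $f$ is replaced by $f^m$ and the result divided by $m$. The same holds for $h(f^m,\mu)=m\,h(f,\mu)$. So we may work with a high iterate $g=f^m$, for which the tangent map is almost constant on small balls relative to its size.

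Fix a small scale $\delta>0$ and take a partition $\mathcal P$ of $M$ into pieces of diameter at most $\delta$ whose boundaries are $\mu$-negligible. The Kolmogorov–Sinai entropy satisfies
$$h(g,\mu)=h(g,\mu,\mathcal P)\le \int H_\mu(\mathcal P\mid g^{-1}\mathcal P)\,d\mu \lesssim \int \log N(x)\,d\mu(x),$$
up to an error, where $N(x)$ bounds the number of elements of $\mathcal P$ that meet $g(P(x))$. Here $P(x)$ is the element of $\mathcal P$ containing $x$. By $C^1$ uniform continuity, $g(P(x))$ is contained in a slightly enlarged image of a $\delta$-ball under the linear map $D_xg$, which is an ellipsoid with semi-axes $\delta\sigma_i(D_xg)$. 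Counting $\delta$-cubes covering this ellipsoid gives
$$N(x)\le C_d\prod_i \max(\sigma_i(D_xg),1),$$
with $C_d$ depending only on the dimension (and on the geometry of $\mathcal P$, which has bounded multiplicity). Hence
$$h(f,\mu)\le \frac1m\Bigl(\log C_d+\int\log\prod_i\max(\sigma_i(D_xf^m),1)\,d\mu\Bigr).$$
As $m\to\infty$, the subadditive ergodic theorem applied to the exterior powers $\log\|\Lambda^k D_xf^m\|$ gives convergence of $\frac1m\int\log\prod_i\max(\sigma_i,1)\,d\mu$ to $\int\sum_i\max(\lambda^i,0)\,d\mu$, while the constant $\frac1m\log C_d$ disappears.

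The main obstacle is making the counting uniform. Partition elements must have diameter comparable to $\delta$ and small $\mu$-boundary. Each image $g(P)$ must meet at most $C_d\prod\max(\sigma_i,1)$ elements, with the constant independent of $m$. One must also handle the nonlinearity error $\|D_yg-D_xg\|$ for $y$ near $x$, which requires choosing $\delta$ depending on $m$ after $m$ is fixed. I would use a grid-like partition in charts and accept a constant $C_d$ that depends only on $d$.

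For the surface statement, let $\mu$ be ergodic with $h(f,\mu)>0$ and $d=2$. The inequality gives $\max(\lambda^1,0)+\max(\lambda^2,0)\ge h(f,\mu)>0$, and applied to $f^{-1}$ (same entropy, exponents negated) it gives $\max(-\lambda^1,0)+\max(-\lambda^2,0)\ge h(f,\mu)$.

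Suppose both exponents were positive. Then the second inequality would have zero left side, which is impossible. By symmetry, both exponents cannot be negative. Hence $\lambda^1>0>\lambda^2$, so the measure is of saddle type. The first inequality then reads $\lambda^1\ge h$ and the second reads $-\lambda^2\ge h$, so both $|\lambda^i|\ge h(f,\mu)>\chi$. This is exactly $\chi$-hyperbolicity for every $0<\chi<h(f,\mu)$.
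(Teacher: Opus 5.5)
The paper does not prove this statement: Ruelle's inequality is quoted as a classical result, and the surface consequence is left as an immediate remark. Your route is the standard one: Ruelle's counting argument for a high iterate, then the subadditive ergodic theorem, then the inequality for $f$ and $f^{-1}$ on surfaces. It works in outline, but three steps need tightening.

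\textbf{Choice of partition.} The equality $h(g,\mu)=h(g,\mu,\mathcal P)$ is not available for an arbitrary small-diameter partition, since a diffeomorphism need not be expansive. Use instead $h(g,\mu)=\lim_{\delta\to0}h(g,\mu,\mathcal P_\delta)$ for partitions with diameters tending to $0$. This suffices because your bound holds for every $\delta$ below a threshold depending on $m$.

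\textbf{Direction of conditioning.} $H_\mu(\mathcal P\mid g^{-1}\mathcal P)$ is controlled by the number of elements of $\mathcal P$ meeting $g^{-1}(Q)$, i.e.\ by $Dg^{-1}$, not by your $N(x)$. To use $N(x)$, write
$h(g,\mu,\mathcal P)=h(g^{-1},\mu,\mathcal P)\le H_\mu(\mathcal P\mid g\mathcal P)\le\sum_{Q\in\mathcal P}\mu(Q)\log\#\{P\in\mathcal P:P\cap g(Q)\ne\emptyset\}=\int\log N\,d\mu$.
For the limit $m\to\infty$, use $\prod_i\max(\sigma_i,1)=\max_{0\le k\le d}\|\Lambda^kD_xf^m\|$. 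Each $\frac1m\log\|\Lambda^kD_xf^m\|$ converges in $L^1(\mu)$ to $\lambda^1+\dots+\lambda^k$ (it is bounded because $f$ is a $C^1$ diffeomorphism of a compact manifold). The maximum of these limits over $k$ is $\sum_i\max(\lambda^i,0)$. The reduction to ergodic $\mu$ and the $\mu$-negligible boundaries are unnecessary.

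\textbf{Surface case analysis.} Ruling out ``both exponents positive'' and ``both negative'' does not yet give $\lambda^1>0>\lambda^2$, because the cases with a zero exponent remain. Argue instead as follows. If $\lambda^2\ge0$, then $\max(-\lambda^1,0)+\max(-\lambda^2,0)=0<h(f,\mu)$, contradicting the inequality for $f^{-1}$. If $\lambda^1\le0$, then $\max(\lambda^1,0)+\max(\lambda^2,0)=0<h(f,\mu)$, contradicting the inequality for $f$. With $\lambda^1>0>\lambda^2$ established, your final step giving $\lambda^1\ge h(f,\mu)$ and $-\lambda^2\ge h(f,\mu)$, hence $\chi$-hyperbolicity of saddle type, is correct.
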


As the product of a positive entropy diffeomorphism with a circle rotation shows, the last assertion does not hold in dimension $3$ or higher.

\medbreak

The starting point of Pesin theory is the following construction of smooth invariant manifolds which are the basic tools for studying many problems (local ergodicity, Markov partitions, spectral decompositions to name a few). See \cite{Katok-Hasselblatt} for  background.

\begin{theorem}[Pesin stable manifold]\label{thmPesinStable}
Let $f\in\Diff^r(M)$ with $r>1$ and $M$ a closed manifold. Assume that $\mu\in\Prob(f)$  is hyperbolic. Then for $\mu$-a.e. $x\in M$, the set
 $$
    W^s(f,x) := \left\{y\in M : \limsup_{n\to+\infty}\frac1n\log d(f^ny,f^nx)<0 \right\}
 $$
is a $C^r$ immersed manifold with tangent space $T_xW^s(f,x)$ coinciding with the stable Oseledets space:
 $
   E^s_x:=\{v\in T_xM: \limsup_{n\to+\infty}\frac1n\log\|Df^n.v\|<0\}.
 $
Moreover, the map $x\mapsto W^s(f,x)$ is measurable in the $C^r$ compact-open topology.
\end{theorem}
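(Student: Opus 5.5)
The plan is the classical Pesin route: reduce to uniform estimates on Pesin blocks, use Lyapunov charts to conjugate $f$ near the orbit of $x$ to small perturbations of hyperbolic linear maps, and build local stable manifolds by a Hadamard--Perron graph-transform (or Perron--Lyapunov--Perron) argument. First, by Oseledets' theorem and hyperbolicity of $\mu$, I choose $\chi>0$ (on each ergodic component, smaller than the least absolute exponent) and $0<\epsilon\ll\chi$. Then $\mu$-a.e. $x$ lies in some $\Lambda(\chi,\epsilon,C)$ with splitting $E^-_x\oplus E^+_x$. Since $\mu(\bigcup_C\Lambda(\chi,\epsilon,C))=1$, it suffices to work on a fixed block and then take the countable union.

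Next I introduce Lyapunov (adapted) norms $\|v\|'_x=\sum_{n\ge0}e^{n(\chi-2\epsilon)}\|Df^nv\|$ on $E^-_x$, defined analogously with $f^{-n}$ on $E^+_x$. On the orbit of $x$ these are comparable to the Riemannian norm up to a factor $C e^{2\epsilon|k|}$, and in them $Df$ is uniformly hyperbolic with rate $e^{-(\chi-2\epsilon)}$. Using exponential charts rescaled by these norms, I write $f$ near $f^kx$ as $F_k=A_k+R_k$ on a ball of radius $r_k=r_0e^{-3\epsilon|k|}$. Here $A_k$ is block-diagonal hyperbolic, and since $r>1$ (the $C^{1+\alpha}$ assumption) the Hölder bound on $Df$ gives $\operatorname{Lip}(R_k)\le\delta$ on that ball, once $r_0$ is small compared with $C^{-1}$ and $\alpha,\epsilon$ are balanced. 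This is exactly where $r>1$ is needed.

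Then I apply a graph transform to the sequence $(F_k)_{k\ge0}$. The set of points whose forward $F$-orbit stays in the shrinking boxes is the graph of a $C^1$ map $\psi_x:E^-\to E^+$ with $\psi_x(0)=0$ and $D\psi_x(0)=0$, because the pullback of admissible graphs by $F_k^{-1}$ contracts in the $C^0$ and $C^1$ topologies. Points on this graph converge at rate $e^{-(\chi-3\epsilon)n}$. Conversely, a cone argument shows that any $y$ whose orbit stays exponentially close (faster than $e^{-3\epsilon n}$) must lie on the graph. Regularity $C^r$ comes from the fiber contraction theorem applied to jets of order up to $r$, using the $C^r$ bounds of $f$ and the domination between the rates. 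The local manifold $W^s_{loc}(x)$ thus has tangent space $E^-_x$, which equals $E^s_x$ for Oseledets-regular $x$.

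Globally, $W^s(f,x)=\bigcup_n f^{-n}(W^s_{loc}(f^nx))$. This union is increasing, since $f^n x$ returns to a block of bounded constant infinitely often by Poincaré recurrence, or by tempered growth of $C$. It is therefore an immersed $C^r$ manifold. I need $y$ with negative exponential rate to eventually enter $W^s_{loc}(f^nx)$, which holds because $d(f^ny,f^nx)\le e^{-an}$ beats the radius $r_0e^{-3\epsilon n}$ once $\epsilon<a/3$; taking $\epsilon\to0$ along a sequence handles all rates. Measurability follows because every construction is a limit of continuous operations on measurable data $(E^\pm_x,C(x))$. The main obstacle is the $C^r$ regularity together with the uniform choice of chart sizes against the temperedness, that is, balancing $\epsilon$ with the Hölder exponent $\alpha$. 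I would handle it first with the Lyapunov-norm estimates, which make everything uniform on a block.
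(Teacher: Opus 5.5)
Your proposal follows the paper's route: the paper deduces the theorem from the exhaustion of $\mu$ by Pesin blocks $\Lambda(\chi,\epsilon,C)$, together with the existence and continuous dependence of stable manifolds on each block. Your Lyapunov-chart graph transform is the standard proof of that block lemma, which the paper takes from the literature; your jet argument also supplies the $C^r$ regularity that the paper's block lemma only states as $C^1$, and for non-ergodic $\mu$ the countable union should run over rational $\chi,\epsilon$ as well as over $C$.

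One detail needs fixing: with the $e^{2\epsilon|k|}$ distortion between Lyapunov and Riemannian norms, the H\"older bound on a ball of radius $r_0e^{-3\epsilon|k|}$ only gives $\operatorname{Lip}(R_k)\lesssim e^{(2-3\alpha)\epsilon|k|}$, which is unbounded when $\alpha<2/3$ whatever $\epsilon$ and $r_0$ are. So the radius must shrink like $r_0e^{-3\epsilon|k|/\alpha}$, and you need $\epsilon\ll\alpha\chi$ so that the graph transform still pulls admissible graphs over box $k+1$ back to admissible graphs over box $k$. The threshold $\epsilon<a/3$ in your last step then becomes roughly $\epsilon<a/(2+3/\alpha)$, once the conversion factor $e^{2\epsilon n}$ is included.
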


The \emph{Pesin unstable manifolds} are $W^u(f,x):=W^s(f^{-1},x)$. Their properties follow from the above theorem.

\medbreak

A hallmark of Pesin theory is the need that the diffeomorphism is at least $C^{1+\alpha}$ with $\alpha>0$ (see \cite{BCS-Pesin}).
\Cref{thmPesinStable} can be deduced from the next two results about Pesin blocks:

\begin{proposition}\label{propPesinBlock}
Given a $\chi$-hyperbolic measure $\mu$, for any $0<\epsilon<\chi$,
 $
   \lim_{C\to\infty} \mu\left(\Lambda(\chi,\epsilon,C)\right)=1.
 $
\end{proposition}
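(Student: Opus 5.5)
The plan is to attach to each Oseledets-regular point $x$ an optimal constant $C(x)$ and to show that $C(x)<\infty$ for $\mu$-a.e.\ $x$. By definition $\{x: C(x)\le C\}\subset\Lambda(\chi,\epsilon,C)$. These sets increase with $C$, so the conclusion follows by monotone convergence of measures. Here $C(x)$ is the smallest $C$ such that $x\in\Lambda(\chi,\epsilon,C)$ with the Oseledets splitting $E=E^-_x$, $F=E^+_x$; it is measurable by the measurability in the multiplicative ergodic theorem. One may assume $\mu$ ergodic only for convenience. The argument is pointwise, and the exponents are allowed to depend on $x$.

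\textbf{Step 1 (non-tempered estimate at the point).} Fix a regular $x$ and choose $\delta(x)>0$ with $|\lambda^i(f,x)|>\chi+\delta$ for all $i$; this uses the strict inequality in $\chi$-hyperbolicity. Since $\frac1n\log\|D_xf^n|E^-_x\|\to\max_{\lambda^i<0}\lambda^i(f,x)<-\chi-\delta$, the quantity
$$
Q^-(x):=\sup_{n\ge0}\|D_xf^n|E^-_x\|\,e^{n(\chi+\delta/2)}
$$
is finite. Define $Q^+$ in the same way using $f^{-1}$ on $E^+_x$, and set $Q=\max(Q^-,Q^+)$. To keep $\delta$ measurable and invariant, I will take $\delta(x)$ to be a fixed measurable function of the exponents, for instance half the gap between $\chi$ and $\min_i|\lambda^i(f,x)|$. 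This function is $f$-invariant.

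\textbf{Step 2 (temperedness along the orbit).} This is the main obstacle: we need $Q(f^kx)\le C e^{\epsilon|k|}$ uniformly in $k$, and not just finiteness at $x$. First I show $\frac1{|k|}\log Q(f^kx)\to0$ as $k\to\pm\infty$. By the chain rule and invariance of $E^\pm$,
$$
Q^-(x)\le \max\bigl(1,\ \|Df\|_\infty e^{\chi+\delta}\,Q^-(fx)\bigr).
$$
Hence $g=\log Q$ satisfies $g-g\circ f\le c$ for a constant $c$, and symmetrically for $f^{-1}$ using $Q^+$. I then apply the standard lemma: if $g\ge0$ is measurable and finite, and $g\circ f-g$ is bounded on one side, then $(g\circ f-g)$ is quasi-integrable and $\frac1k g\circ f^k\to0$ a.e. The proof goes by Birkhoff's theorem combined with Poincaré recurrence, which forces the Birkhoff average of the coboundary to vanish.

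\textbf{Step 3 (tempering kernel).} Given this subexponential growth, I will use Pesin's tempering kernel lemma in the form
$$
K(x):=\sup_{k\in\mathbb Z}Q(f^kx)e^{-\epsilon|k|/2}<\infty,
$$
which gives $K\ge Q$ and $K(f^{\pm1}x)\le e^{\epsilon/2}K(x)$. Hence $Q(f^kx)\le K(x)e^{\epsilon|k|/2}$. Combining this with Step 1 applied at $f^kx$ gives
$$
\|Df^n|Df^k(E^-_x)\|\le K(x)e^{\epsilon|k|}e^{-n\chi},
$$
and the analogous bound for $F$ with $f^{-n}$. Thus $C(x)\le K(x)<\infty$ a.e., which concludes the proof.
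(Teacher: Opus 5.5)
Your proof is correct and is the standard Pesin-theory argument: Oseledets regularity makes the non-tempered constant $Q$ finite, a bounded-coboundary lemma plus Poincar\'e recurrence makes $\log Q$ tempered, and the tempering kernel $K$ places $x$ in $\Lambda(\chi,\epsilon,K(x))$; the paper gives no proof of its own and only quotes this classical fact, which is proved this way. One small repair: your two displayed estimates (for $\log Q^-$ under $f$ and for $\log Q^+$ under $f^{-1}$) bound the coboundary of $g=\log Q$ from opposite sides, so either apply the lemma to $\log Q^{-}$ and $\log Q^{+}$ separately and take the maximum, or note that also $Q^-(fx)\le\|Df^{-1}\|_\infty e^{-(\chi+\delta/2)}Q^-(x)$, so each coboundary is in fact bounded.
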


\begin{lemma}
For any parameters $0<\epsilon\ll\chi$ and $C\ge1$:
(i) for any $x\in\Lambda(\chi,\epsilon,C)$, $W^s(f,x)$ is a $C^1$ immersed manifold; (ii)~$x\in\Lambda(\chi,\epsilon,C)\longmapsto W^s(f,x)$ is continuous in the compact-open topology.
\end{lemma}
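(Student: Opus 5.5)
The statement is the classical Pesin stable manifold theorem restricted to a Pesin block, so the plan is to run the Hadamard--Perron graph transform in Lyapunov charts. The constants $(\chi,\epsilon,C)$ give uniform control of the charts over the block. The standing assumption $f\in\Diff^{1+\alpha}$ will be used throughout, as it is needed for Pesin theory.

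\textbf{Lyapunov charts.} Fix $x\in\Lambda(\chi,\epsilon,C)$ with splitting $E\oplus F$. For $k\in\mathbb Z$ define adapted norms on $T_{f^kx}M$ by
$$
|v|'_k=\sum_{n\ge0}e^{n(\chi-2\epsilon)}\|Df^nv\| \quad (v\in Df^kE),
$$
and symmetrically on $Df^kF$ using $f^{-n}$. The definition of the block gives $\|\cdot\|\le|\cdot|'_k\le C'e^{\epsilon|k|}\|\cdot\|$, where $C'$ depends only on $(\chi,\epsilon,C)$. In these norms $Df$ contracts $Df^kE$ by at least $e^{-(\chi-2\epsilon)}$ and expands $Df^kF$ by at least $e^{\chi-2\epsilon}$. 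Composing with the exponential map gives local charts at $f^kx$ of size $r_k=r_0e^{-\epsilon|k|/\alpha'}$. Here I use the H\"older continuity of $Df$: in each chart, $f$ is a hyperbolic linear map plus a term whose Lipschitz constant is at most $\delta$, provided $\epsilon\ll\chi$ and $r_0$ is small depending only on the block constants.

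\textbf{Graph transform.} Consider the sequence of spaces of $1$-Lipschitz graphs $\phi_k:E_k(r_k)\to F_k$ over the stable directions. The map $\phi_{k+1}\mapsto\phi_k$ induced by $f^{-1}$ is a contraction in the $C^0$ norm, since $f^{-1}$ expands $E$ and contracts $F$. Its fixed point $(\phi_k)$ is unique, and its graph at $k=0$ is the local stable manifold $W^s_{loc}(x)$. A point lies on it if and only if its forward orbit stays in the shrinking charts, which happens if and only if it converges exponentially at rate about $\chi-3\epsilon$.

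A $C^1$ estimate comes from the invariant cone argument applied to tangent planes. The graph is then $C^1$ and tangent to $E$.

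Finally $W^s(f,x)=\bigcup_n f^{-n}W^s_{loc}(f^nx)$. Any $y$ with exponential forward convergence eventually enters the local manifold along the orbit, since $r_n$ decays slower than $d(f^ny,f^nx)$ when $\epsilon\ll\chi$. This gives (i): $W^s(f,x)$ is an increasing union of embedded $C^1$ discs, i.e. an immersed $C^1$ manifold.

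\textbf{Continuity, part (ii).} First, the splitting $E\oplus F$ depends continuously on $x$ in the compact block. This follows because it is unique and is characterized by closed conditions with uniform constants, so a limit of splittings satisfies the same estimates.

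Next, the chart data and the contraction constant are uniform over the block. The fixed point of a uniformly contracting family that depends continuously on parameters depends continuously on them. So $x\mapsto W^s_{loc}(x)$ is continuous in the $C^1$ topology, and in particular in the compact-open topology.

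The global manifolds are then obtained by applying $f^{-n}$ to $W^s_{loc}(f^nx)$, where $f^nx$ lies in the block $\Lambda(\chi,\epsilon,e^{n\epsilon}C)$. Each finite-$n$ piece therefore varies continuously. Continuity on compact pieces of the immersed manifold follows.

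\textbf{Main obstacle.} The delicate step is the uniformity of the $C^1$ nonlinearity bound in the charts. The chart distortion grows like $e^{\epsilon|k|}$, and it must be beaten by the shrinking radii $r_k$ while the contraction factor still stays below $1$. I would first try choosing $r_k=r_0e^{-2\epsilon|k|/\alpha}$ and verify the inequality $\epsilon(1+2/\alpha)<\chi/2$, which is what the assumption $\epsilon\ll\chi$ should guarantee.
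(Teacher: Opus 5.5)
Your strategy (adapted norms along the orbit, charts of tempered radius $r_k$, Hadamard--Perron graph transform, then globalization and continuity via compactness of the block) is the classical Pesin-theory route. The paper states this lemma without proof, as background, so this is the argument it implicitly relies on. There is, however, a genuine gap in the globalization step.

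\textbf{The gap.} The paper's $W^s(f,x)$ is the set of $y$ with $\limsup_n\frac1n\log d(f^ny,f^nx)<0$. That is, $y$ converges at \emph{some} rate $\eta>0$, which may be tiny and unrelated to $\chi$. Your justification ``$r_n$ decays slower than $d(f^ny,f^nx)$ when $\epsilon\ll\chi$'' only holds if $\eta$ beats the decay rate of the charts: roughly $\epsilon/\alpha$, plus the $e^{c\epsilon n}$ distortion of the adapted norms.
\begin{itemize}
\item If $0<\eta\lesssim\epsilon/\alpha$, then $f^ny$ can stay outside the Lyapunov chart of $f^nx$ for every $n$. Your characterization of $W^s_{loc}$ (the \emph{whole} forward orbit stays in the shrinking charts) then never applies.
\item A Perron-type bootstrap along the orbit does not help either: it improves the rate $\eta$ only when $\alpha\eta$ dominates a multiple of $\epsilon$.
\item Since $\epsilon$ is fixed by the block, you cannot shrink it to match $\eta$.
\end{itemize}
What your argument actually proves is that $\bigcup_n f^{-n}W^s_{loc}(f^nx)$ is an injectively immersed $C^1$ manifold. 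It also proves that this union equals the set of points converging at a \emph{fast} rate (say $\limsup\frac1n\log d(f^ny,f^nx)<-c'\epsilon$), and that (ii) holds for that family.

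\textbf{How to close it.} To recover the paper's definition you need an extra ingredient, namely the one used when deducing Theorem~\ref{thmPesinStable}. By Proposition~\ref{propPesinBlock}, $\mu$-a.e.\ $x$ lies in $\Lambda(\chi,\epsilon',C_{\epsilon'})$ for \emph{every} $\epsilon'\in(0,\chi)$ (take a countable sequence $\epsilon'\to0$). Given $y$ with rate $\eta$, rerun your construction with $\epsilon'\ll\alpha\eta$. Then $f^ny$ eventually enters and stays in the $\epsilon'$-charts, so it converges at rate close to $\chi$. By the fast-rate characterization it therefore lies in the $\epsilon$-local stable manifold of some $f^nx$. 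For an arbitrary point of a single block this device is unavailable, so there (i) and (ii) should be read for the fast stable set.

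\textbf{Two smaller points.}
\begin{itemize}
\item The paper's block definition contains no angle condition. Transversality of $E$ and $F$ does follow from the estimates, but with a worse exponent. So your norm comparison is really $C'e^{c\epsilon|k|}$, with $c$ depending on $\chi$ and $\sup\|Df\|$. This is harmless after shrinking $\epsilon$.
\item Continuity of the fixed point in $x$ is not literally that of a uniformly contracting family depending continuously on a parameter, because the dependence of the $k$-th step on $x$ is not uniform in $k$. You need the usual truncation: $\phi_0$ depends on $\phi_k$ only up to $\theta^k$. For the $C^1$ statement you also need a fiber-contraction or equicontinuity argument.
\end{itemize}
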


\subsection{Stronger notions of nonuniform hyperbolicity.}
Pesin hyperbolicity (the nonvanishing of the Lyapunov exponents) yields a number of deep results. For instance, Pesin has shown that the ergodic components of hyperbolic smooth invariant measures are themselves smooth, hence countably many \cite{Barreira-Pesin-book-2023}. Sarig has shown similarly that the ergodic components of measures maximizing the entropy on surfaces are also countably many when the topological entropy is positve \cite{Sarig-JAMS}. However, Pesin hyperbolicity is unsufficient for many common and important properties such as existence of the relevant measures and more quantitative properties such as speeds of mixing, large deviation principles, or limit theorems (see below).

A natural idea is to try and restore some uniformity in the hyperbolicity by iterating suitable powers of the transformation, i.e., trying to rewrite the large iterate $f^n$ as a composition $f^{n_r}\circ\dots\circ f^{n_1}$ such that (i) each iterate $f^{n_i}$ is almost uniformly hyperbolic; (ii) the iterates $n_i$ are not too large. Sometimes this can be done by taking the first return map to a carefully selected subset, but often one needs to select ``good returns''. L.-S.~Young \cite{YoungTower1,YoungTower2} has proposed a very successful scheme of this type with respect to some natural reference measure such as the volume. 

Such \emph{Young towers} (as they are called now) are a very powerful tool that has been adapted to many important and diverse nonuniformly hyperbolic dynamics. We refer to the book \cite{Alves-book-2020} and the references therein for an uptodate presentation and their numerous and important applications and consequences. However, finding a good subset on which to define the good returns, as well as the exact specification of these good returns, remains a delicate problem.

\subsection{Definition of Strong Positive Recurrence.} 
\cite{BCS-SPR} defines Strong Positive Recurrence for a diffeomorphism on an invariant measurable subset $X$ in terms of its ergodic measures  with \emph{nearly} maximal entropy:

\begin{definition}\label{def-diffeo-SPR}
A diffeomorphism $f$ of a compact manifold $M$ is said to be \emph{Strongly Positively Recurrent} (\emph{SPR} for short) on some invariant measurable subset $X$ if for some $\chi_0>0$, for any $\epsilon>0$, there are numbers  $h_0<\hTOP(f|_X)$, $C_0,m_0>0$ such that:
 $$
   \forall \mu\in\Proberg(f|_X)\quad h(f,\mu)>h_0\implies \mu(\Lambda(\chi_0,\epsilon,C_0))\ge m_0.
 $$
We say that $f$ is SPR, when it is SPR on $X=M$.
\end{definition}

This generalizes uniform hyperbolicity. Indeed, for an Anosov or Axiom-A diffeomorphism, there is a single Pesin block that carries all invariant measures. 

\medbreak

The SPR property implies a wealth of consequences for measures maximizing the entropy when $X=M$ or when $X$ is ``reasonable'' (see below).

\begin{remark}\label{remSPRpot}
The SPR property above only allows the study of measures maximizing the entropy. However, it extends naturally to \emph{equilibrium states.} 
Those are the measures maximizing the so-called pressure $p_\phi(f,\mu):=h(f,\mu)+\int_M\phi\, d\mu$ for some upper-bounded Borel function $\phi:M\to\RR\cup\{-\infty\}$ (see \cite{Ruelle-Thermo} for background). 

Indeed,  \cite[****]{BCS-SPR} defines the notion of $\phi$-SPR  and shows that, for H\"older or geometric functions $\phi$, this notion implies the same consequences for the equilibrium states as SPR for measures maximizing the entropy, e.g., exponential mixing, limit theorems,\dots Establishing the $\phi$-SPR property for interesting functions $\phi$ is another matter and is discussed below.
\end{remark}

\begin{remark}
It is interesting to compare SPR and Pesin hyperbolicity. 

First, checking SPR involves in principle not only the measures maximizing the entropy, but all nearly maximal ones. See however \cref{thmSPRfromExpRet} on surfaces.

Second, SPR is strictly stronger than the Pesin-Hyperbolicity of every ergodic measure with maximal or nearly maximal entropy, see \cref{coroNoMaxNoSPR}. In fact, the Pesin-hyperbolicity of all ergodic measure with nearly maximal entropy does not imply the properties implied by SPR, see \cref{propBadHypMME}.
\end{remark}

We believe that the interest of the above \cref{def-diffeo-SPR} will be supported by its many consequences (existence of finitely many MMEs with exponential mixing and other good statistical properties) as well as its many characterizations:
 \begin{enumerate}
     \item existence of a common Pesin block seen by all large entropy measures, i.e., \cref{def-diffeo-SPR};
   \item convergence of Lyapunov exponents (see \cref{thmSPRECLEsurf}  for surfaces and \cref{thm-EHEC} beyond);
   \item symbolic dynamics being SPR in the sense of Gurevich and Sarig (\cref{secSPRCoding});
   \item exponential tail of the return time (\cref{thmSPRfromTail});
 \end{enumerate}
Moreover, we will see that characterizations (1) and (2) present \emph{rigidity phenomena}: they are equivalent to apparently stronger properties of the same type.

\section{Strong Positive Recurrence of surface diffeomorphisms.}

This section is devoted to the SPR property for surface diffeomorphisms. First and foremost, all $C^\infty$-diffeomorphisms of surfaces are SPR, giving the strongest argument for the interest of this notion. Then our result take their simplest and strongest form. More general and precise results considering irreducible pieces and their period in any dimension will be discussed in later sections.

\subsection{SPR for all.}
Perhaps the most striking fact about SPR  is how general it is on surfaces. We have already remarked that for $C^1$ surfaces diffeomorphisms, all positive entropy ergodic measures are Pesin-hyperbolic, see \cref{thmRuelleInequ}.

\begin{theorem}[B-Crovisier-Sarig \cite{BCS-SPR}]
\label{thmSurfaceSPR}
Let $f:M\to M$ be a diffeomorphism of a compact surface. If $f$ is $C^\infty$ smooth and $\htop(f)>0$, then $f$ is strongly positively recurrent.
\end{theorem}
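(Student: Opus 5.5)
We argue by contradiction, using the continuity of Lyapunov exponents from \cite{BCS-ECLE} together with Yomdin theory for $C^\infty$ maps. Fix $\chi_0$ with $0<\chi_0<\htop(f)/2$; this is possible since $\htop(f)>0$. Suppose $f$ is not SPR for this $\chi_0$. Then there are $\epsilon>0$ and ergodic measures $\mu_n$ with $h(f,\mu_n)\to\htop(f)$ and
$$
\mu_n\bigl(\Lambda(\chi_0,\epsilon,C)\bigr)\to0 \quad\text{for every fixed } C,
$$
after a diagonal extraction over an increasing sequence of $C$'s. By compactness of $\Prob(f)$ we may pass to a subsequence with $\mu_n\to\mu$ weakly.

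\emph{Step 1: upper semicontinuity of entropy.} For $C^\infty$ maps, Newhouse and Yomdin give $\limsup_n h(f,\mu_n)\le h(f,\mu)$. Hence $h(f,\mu)=\htop(f)$. By the variational principle, almost every ergodic component $\nu$ of $\mu$ is an MME. By \cref{thmRuelleInequ} each such $\nu$ has exponents satisfying
$$
\lambda^1(f,\nu)\ge\htop(f)>2\chi_0 \quad\text{and}\quad \lambda^2(f,\nu)\le-\htop(f).
$$

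\emph{Step 2: continuity of exponents.} This is the key input, the main theorem of \cite{BCS-ECLE}: for $C^\infty$ surface diffeomorphisms, if $\mu_n\to\mu$ and $h(f,\mu_n)\to\htop(f)$, then $\lambda^+(f,\mu_n)\to\int\lambda^+\,d\mu$. The same holds for $f^{-1}$, which gives convergence of $\lambda^-$. I expect this to be the main obstacle if it must be proved from scratch. The idea is that a defect $\lambda^+(\mu)<\liminf\lambda^+(\mu_n)$ would force part of the exponent to be carried by neighbourhoods of points with small exponent. Yomdin's reparametrization of curves, in the form of Burguet's entropy-structure estimates, bounds the entropy generated there by roughly $\lambda/r$ with $r=\infty$. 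This contradicts $h(\mu_n)\to\htop$. Here I simply invoke it.

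\emph{Step 3: from exponent convergence to uniform Pesin block mass.} I would aim to show that this convergence forces uniform hyperbolic times. Fix a large $N$. Let $G_N$ be the set of points $x$ where some unit vector cone is expanded by $\frac1N\log\|D_xf^N\|\ge\lambda^+-\delta$ and the complementary direction is contracted. For the limit $\mu$ we have $\mu(G_N)$ close to $1$ once $N$ is large. Weak convergence combined with the convergence of $\int\frac1N\log\|Df^N\|\,d\mu_n$ to the corresponding integral for $\mu$ then gives the following. A definite fraction of $\mu_n$-typical times are $N$-blocks of near-optimal expansion, and the averaged defect $\frac1N\int\log\|Df^N\|\,d\mu_n-\lambda^+(\mu_n)$ stays small. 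This uses $\lambda^+(\mu_n)\to\lambda^+(\mu)$; without that convergence the defect could be large.

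A Pliss-lemma argument applied to the sequence of $N$-block expansions, together with the angle control given by the dominated behaviour on good blocks, then produces a set of $\mu_n$-mass at least some $m_0>0$, uniform in $n$. On this set, all forward and backward partial products satisfy $(C_0,\chi_0)$ estimates with $\epsilon$-tempering. It is therefore contained in $\Lambda(\chi_0,\epsilon,C_0)$ with $C_0$ depending only on $N,\delta,\epsilon$. This contradicts $\mu_n(\Lambda(\chi_0,\epsilon,C_0))\to0$ and proves the theorem.

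Within this step, the delicate part is obtaining two-sided, tempered estimates (the $e^{\epsilon|k|}$ factor along the whole orbit) rather than merely one-sided hyperbolic times. I would handle this by applying Pliss's lemma simultaneously forward for $Df$ on the unstable cone and backward for $Df^{-1}$, using the symmetric exponent convergence for $f^{-1}$. The measure of the intersection of the two good sets is bounded below by $2m-1$-type estimates once each has mass greater than $1/2$. This is achieved by choosing $\delta$ small relative to $\chi_0$ and to the gap $\htop-2\chi_0$.
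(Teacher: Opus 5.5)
Your proposal is correct in outline and follows the paper's proof. Newhouse's upper semicontinuity of entropy together with Ruelle's inequality makes the limit $\mu$ uniformly hyperbolic, \cite{BCS-ECLE} gives convergence of both exponents, and the remaining step is the purely $C^1$ Pliss/maximal-inequality argument that the paper invokes as the forward direction of \cref{thmSPRECLEsurf}. Your norm defect $\frac1N\int\log\|Df^N\|\,d\mu_n-\lambda^+(\mu_n)$ plays the role of the paper's convergence of the lifts of $\mu_n$ to the unstable section, and arguing by contradiction rather than directly is cosmetic.

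When you write Step 3 out, the $e^{\epsilon|k|}$ tempering and masses above $1/2$ do not come from intersecting one stable and one unstable set of hyperbolic times. Plain Pliss on raw block rates only bounds their density below by roughly $\chi/\log L$, with $L$ the Lipschitz constant of $f$. They come from the bad $N$-blocks having small density in every window around time $0$. You get this by applying Pliss to block rates truncated at the typical level, or the maximal inequality to the bad set, in both time directions for each of $E^u$ and $E^s$.
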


There are many surface diffeomorphisms that are SPR but not uniformly hyperbolic. Indeed, if $(f_t)_{t\in[0,1]}$ is an isotopy\footnote{More precisely, a continuous family of $C^\infty$ surface diffeomorphisms.} with $\htop(f_0)\ne \htop(f_1)$, take $f_t$ for any of the uncountably many $t\in[0,1]$ such that $e^{\htop(f_t)}$ is not an algebraic number (such $t$'s exist by continuity of $t\mapsto \htop(f_t)$).

We note that \cite{Berger-Henon} had already proved exponential mixing and central limit theorem for the unique measure maximizing the entropy of ''strongly regular'' H\'enon maps (a set of parameters with positive Lebesgue measure) by constructing an SPR Young tower.

\begin{remark}
One cannot drop any of the assumptions as shown by the following examples: 
(i) $\tau:\mathbb T^2\to\mathbb T^2$ an arbitrary translation of the $2$-torus;
(ii) $F={\tiny \begin{pmatrix}2 & 1\\1 & 2\end{pmatrix}}\times\rho$ where the integer matrix acts on $\mathbb T^2$ (with positive entropy) and $\rho$ is a rotation on the circle;
(iii) for any $1<r<\infty$ and any surface $M$, there are  $C^r$ diffeomorphisms  with $\htop(f)>0$ but not SPR, see \cref{coroNoMaxNoSPR}.

Indeed, (i) and (ii) do not have any Pesin blocks because of the existence of an isometric direction.
\end{remark}

\begin{remark}
Zero entropy and SPR are not exclusive, though the intersection is somewhat trivial. Indeed,  a zero entropy diffeomophism is SPR if and only if it has finitely many ergodic measures, each defined by some hyperbolic periodic orbit. Its non-wandering set can of course be more complicated (e.g., contain separatrices as in the classical Bowen example, see \cite{Takens-Bowen}).
\end{remark}

In the rest of this section, we explain how to derive \cref{thmSurfaceSPR} from a continuity property of the Lyapunov exponents and, conversely, give further consequences for the Lyapunov exponents.

\subsection{Continuity of Lyapunov exponents on surfaces.}
Proving the SPR property for smooth surface diffeomorphisms led us to establish a continuity property of Lyapunov exponents. Using Kingman's subadditive theorem and $C^1$ smoothness, it is easy to see that the average top Lyapunov exponent $\lambda^1(f,\mu)$
is an upper semicontinuous function of $\mu\in\Prob(f)$ with respect to the weak topology. However, it is \emph{not} lower semicontinuous (a phenomenon related to homoclinic tangencies). So it is perhaps surprising that the following holds for $C^\infty$ surface diffeomorphisms:

\begin{theorem}[B-Crovisier-Sarig \cite{BCS-ECLE}]\label{thmECLE-htop-surf}
Let $f$ be a $C^\infty$-smooth diffeomorphism of a compact surface. Let $\nu_k\in\Proberg(f)$ weakly converge to some  $\mu\in\Prob(f)$. 
\smallbreak

If $\lim_k h(f,\nu_k)=\htop(f)>0$, then $\lim_k \lambda^i(f,\nu_k) = \lambda^i(f,\mu)$  for $i=1,2$. 
\end{theorem}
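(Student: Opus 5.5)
Since $\lambda^1(f,\cdot)$ is upper semicontinuous and $\lambda^1+\lambda^2=\int\log|\det Df|\,d\mu$ is continuous in the weak topology, it suffices to show lower semicontinuity of $\lambda^1$ along the sequence: $\liminf_k \lambda^1(f,\nu_k)\ge\lambda^1(f,\mu)$. Applying the same argument to $f^{-1}$, which has the same entropies and swaps the roles of $\lambda^1$ and $-\lambda^2$, handles $i=2$ as well. The plan is to decompose $\mu=\int\mu_\omega\,d\omega$ into ergodic components and to control the part of $\mu$ where the exponent could ``drop''. By upper semicontinuity of the entropy for $C^\infty$ maps (Newhouse, Yomdin), $h(f,\mu)\ge\limsup h(f,\nu_k)=\htop(f)$. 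Hence $\mu$ is a measure maximizing the entropy, and almost every ergodic component $\mu_\omega$ has entropy $\htop(f)>0$. By \cref{thmRuelleInequ}, each such component then has $\lambda^1(f,\mu_\omega)\ge\htop(f)>0$.

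\emph{Reparametrization step.} The exponent can be lost when, at some scale, orbits of $\nu_k$ spend a positive proportion of time near points where the unstable direction is being rotated and strongly contracted, as near a homoclinic tangency. I would quantify this with Yomdin's $C^\infty$ reparametrization lemma applied to iterates of curves tangent to approximate unstable directions. For any $r$, a $C^r$ curve's image under $f^n$ can be covered by a number of reparametrizations which is exponential in $\frac1r\log^+\|Df\|$ times the number of ``bad'' times, i.e.\ times when the derivative along the curve is much smaller than $\|Df^n\|$. Combining this count with entropy estimates gives
\[
h(f,\nu_k)\le \lambda^1(f,\nu_k)\cdot(\text{good proportion}) + \tfrac{C}{r}\log^+\|Df\|+o(1).
\]
Comparing with the entropy of the limit, I would show that the $\nu_k$-frequency of times spent near ``neutral'' directions tends to $0$ as $k\to\infty$ and then $r\to\infty$. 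This is exactly where both hypotheses enter: entropy close to $\htop$ leaves no room to waste entropy, and $C^\infty$ smoothness makes the Yomdin error term $C/r$ arbitrarily small.

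\emph{Passing to the limit.} Consider the projective bundle $\mathbb P TM$ and lift $\nu_k$ to measures $\hat\nu_k$ on $\mathbb P TM$ carried by the unstable Oseledets directions. Take a limit point $\hat\mu$. The observable $\phi(x,v)=\log\|Df_x v\|$ is continuous, so $\int\phi\,d\hat\nu_k=\lambda^1(f,\nu_k)$ converges to $\int\phi\,d\hat\mu$. Any invariant lift of an ergodic component $\mu_\omega$ projects onto a combination of its Oseledets directions, so it computes either $\lambda^1(\mu_\omega)$ or $\lambda^2(\mu_\omega)$. The goal is therefore to show that $\hat\mu$ gives no weight to the stable directions of the components. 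The previous step is meant to guarantee this: mass moving to stable directions would force a positive frequency of bad times along the approximating orbits, and hence an entropy defect of at least $c\,\htop(f)$, which would contradict $h(\nu_k)\to\htop(f)$. It follows that $\int\phi\,d\hat\mu=\lambda^1(f,\mu)$, which gives the claimed convergence.

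The main obstacle is the middle step: turning the reparametrization count into a quantitative entropy defect that is proportional to the time spent in bad regions. The difficulty is that this must hold uniformly in $k$ and for limit directions that are not yet Oseledets directions. I would first attempt it using a conditional entropy along local unstable curves together with a Ledrappier–Young type bound, $h\le\lambda^1\times$(dimension of conditionals), accepting a fixed small scale where needed.
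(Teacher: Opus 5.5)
Your reduction is sound. By Newhouse's upper semicontinuity, $\mu$ is an MME, so its ergodic components are hyperbolic of saddle type ($\lambda^1\ge\htop(f)\ge-\lambda^2$ by Ruelle's inequality for $f^{\pm1}$). Any limit $\hat\mu$ of the unstable lifts $\hat\nu_k$ then has the form $\int\bigl(a_\omega\mu^u_\omega+(1-a_\omega)\mu^s_\omega\bigr)\,d\omega$, and the statement amounts to $a_\omega=1$ a.e. The gap is that the step excluding $a_\omega<1$ is only asserted, and the one inequality you propose cannot do it. A bound $h(f,\nu_k)\le\lambda^1(f,\nu_k)\cdot g_k+\frac Cr\log^+\|Df\|+o(1)$ compares entropy with an \emph{exponent}. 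Since typically $\htop(f)=h(f,\mu)<\lambda^1(f,\mu)$, a limit exponent $\int\phi\,d\hat\mu$ strictly between $\htop(f)$ and $\lambda^1(f,\mu)$, with good proportion bounded away from $1$, is perfectly compatible with that bound. The same problem sits in ``positive frequency of bad times, hence an entropy defect $c\,\htop(f)$''. This presupposes that the entropy of $\nu_k$ produced during good times is at most $\htop(f)$ per unit of good time. That does not follow from the variational principle: entropy concentrated on a fraction $g$ of the times is, by Abramov's formula, entropy $h/g$ of an induced system, and this can exceed $\htop(f)$.

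What is missing is the content of Theorem~\ref{thmECLE}, from which the paper obtains the statement. You must show three things. First, the empirical measures over the ``good'' part of $\nu_k$-orbits converge to $\beta\mu_1$ with $\mu_1$ \emph{invariant}. Second, the entropy produced there is bounded by $\beta h(f,\mu_1)$ up to a Yomdin error; this is a localized upper semicontinuity, not Newhouse's global one. Third, the discarded part carries zero net exponent and entropy $O(1/r)$ per unit time. In \cite{BCS-ECLE} this is done, roughly, by working on Pesin unstable curves of $\nu_k$ and cutting typical orbits into stretches at Pliss-type hyperbolic times and \emph{neutral blocks}. A neutral block is a passage of the unstable direction near a stable direction \emph{together with} the time needed to recover the lost expansion, and the reparametrization lemma is applied to these blocks. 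Granting, along a subsequence with $\lim_k\lambda^1(f,\nu_k)\ge\htop(f)>0$, that $\mu=(1-\beta)\mu_0+\beta\mu_1$ with $\lim_k h(f,\nu_k)\le\beta h(f,\mu_1)$ and $\lim_k\lambda^1(f,\nu_k)=\beta\lambda^1(f,\mu_1)$, the hypothesis gives $\htop(f)\le\beta h(f,\mu_1)\le\beta\htop(f)$. Hence $\beta=1$, $\mu_1=\mu$, and the exponents converge; $i=2$ follows from continuity of $\int\log|\det Df|$. Your outline names the right tools, but without this entropy comparison with the limit measure it does not yet contain the argument.
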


\begin{remark}\label{remBothC0}
By Oseledets theorem, $\lambda^1(f,\nu)+\lambda^2(f,\nu) = \int \log |\det Df|\, d\nu$ which is a continuous function of $\nu\in \Prob(f)$ by the definition of the weak topology. Therefore the continuity of one exponent is equivalent to the continuity of both.
\end{remark}

In fact, we proved the following more general property:

\begin{theorem}[B-Crovisier-Sarig \cite{BCS-ECLE}]\label{thmECLE}
Let $f$ be a $C^\infty$-smooth diffeomorphism of a compact surface. Let $\nu_k\in\Proberg(f)$ weakly converge to some  $\mu\in\Prob(f)$. Assume that the  limits $\lim_k h(f,\nu_k)$, $\lim_k \lambda^1(f,\nu_k)$ exist and are  positive. Then, one can write $\mu=(1-\beta)\mu_0+\beta\mu_1$ with $0<\beta\le 1$, $\mu_0,\mu_1\in\Prob(f)$, and:
 $$
    \lim_k h(f,\nu_k) \leq \beta \cdot h(f,\mu_1) \text{ and }
    \lim_k \lambda^1(f,\nu_k) = \beta\cdot \lambda^1(f,\mu_1)\;.
 $$ 
\end{theorem}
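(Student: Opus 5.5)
The plan is to split the trajectories of a $\nu_k$-typical point according to whether, at a fixed small scale, the unstable direction is carried by a ``good'' unstable curve or is in a region where strong folding occurs. Choose a small scale and a large integer $N$. Take a $\nu_k$-typical point $x$ and a small curve tangent to $E^u_x$. Iterate this curve forward and apply the Yomdin--Gromov algebraic lemma ($C^\infty$ reparametrization) to obtain, at each time $n$, a cover of $f^n$ of the curve by boundedly many (in the $C^r$ sense, $r$ large) reparametrizations. Declare time $n$ \emph{good} if the reparametrized curve through $f^n(x)$ still has size at least the fixed scale and controlled geometry, meaning it is ``almost a straight piece'' of a local unstable manifold. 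Declare the other times \emph{neutral}.

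The key estimate, which I expect to be the main obstacle, is an entropy bound on the neutral part. The number of reparametrizations needed on a stretch of neutral times must grow at most at rate roughly $\lambda^1/r$ plus a small error. This is Yomdin's bound; combined with Ruelle--Margulis (\cref{thmRuelleInequ}), it says that neutral stretches carry almost no entropy once $r$ is large. At the same time, the exponent accumulated on neutral stretches is small, because on a curve of size below the scale, expansion forces a fold and $\log\|Df^n|_{E^u}\|$ is bounded by the geometry. Concretely, I would try to prove a ``reparametrization lemma'' giving a bound of the form
\[
\#\{\text{pieces}\}\le \exp\bigl(n(\lambda^+/r+\delta)\bigr),
\]
together with the statement that times followed by good times have uniformly bounded distortion.

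Next, define $\beta_k$ as the $\nu_k$-frequency of good times, and let $\mu_{1,k}$ be the normalized empirical measure over good times. Pass to the limit, taking the good set to be closed so that weak limits behave, and obtain $\mu=(1-\beta)\mu_0+\beta\mu_1$. Continuity of the exponent along good times comes from the fact that good times give $\lambda^1$ as the integral of a continuous cocycle: take $\log\|Df|_{\text{tangent of the good curve}}\|$ on a compact space of curves (a lift to the Grassmannian or curve space), which is continuous there. This yields $\lim \lambda^1(\nu_k)=\beta\lambda^1(\mu_1)$, the neutral contribution being negligible. For entropy, use the neutral-stretch bound and a Misiurewicz-type partition argument on the lifted system, together with upper semicontinuity of entropy for $C^\infty$ maps (Newhouse/Buzzi, the tail entropy vanishing), to get $\lim h(\nu_k)\le\beta h(\mu_1)$.

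Finally, $\beta>0$ holds because $\lim h(\nu_k)>0$, while the neutral part carries entropy at most $\delta$. The hard step remains building a scale-uniform good/neutral dichotomy in which neutral stretches have both small entropy and small exponent simultaneously, uniformly in $k$.
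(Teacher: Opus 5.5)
Your toolkit matches the one the paper points to for \cite{BCS-ECLE}: Yomdin-type reparametrization of unstable curves, a split into good and neutral times, the projective lift on which $\phi(x,[v])=\log(\|D_xf\,v\|/\|v\|)$ is continuous, and upper semicontinuity of entropy. However, the decomposition step fails as you set it up.

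First, $\mu_{1,k}$, the normalized empirical measure over good times, is $\nu_k$ conditioned on a non-invariant set. So its limits $\mu_1$ and $\mu_0$ are in general not in $\Prob(f)$, and $h(f,\mu_1)$, $\lambda^1(f,\mu_1)$ are then meaningless.

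Second, continuity of $\phi$ only yields $\lim_k\lambda^1(f,\nu_k)=\beta\int\phi\,d\hat\mu_1+(1-\beta)\int\phi\,d\hat\mu_0$. The claim needs two further facts: $\hat\mu_1$ must sit on the unstable direction of $\mu_1$ (so that $\int\phi\,d\hat\mu_1=\lambda^1(f,\mu_1)$), and the neutral term must vanish. A criterion based on the size and shape of the reparametrized curve at time $n$ gives neither. It carries no uniform information on the backward behaviour of the direction, and that backward behaviour is what survives weak limits and identifies the limit direction.

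Your fold heuristic for the neutral exponent is also wrong. A piece cut very short at a fold can be re-expanded by an unbounded factor before it becomes good again. More importantly, the contribution you must control is the negative one. After passing near a tangency, the $\nu_k$-unstable direction can lie along the stable direction of a saddle and be contracted for a long time; this is exactly how $\lambda^1$ drops in the limit. Nothing in your argument shows that this contraction is compensated.

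The missing idea is to define good times intrinsically and backwards, in the spirit of Pliss. Say $n$ is an $\alpha$-hyperbolic (geometric) time for $\hat x=(x,E^u_x)$ if $\|Df^{-\ell}|_{\hat f^n(\hat x)}\|\le e^{-\alpha\ell}$ for $1\le\ell\le n$. The reparametrization lemma is then used to show two things: at such times the iterated unstable curve contains a large piece of bounded geometry, and neutral stretches create entropy at most of order $\log\|Df\|_\infty/r$ per unit time.

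This condition is closed in $\mathbb{P}TM$. Hence directions seen at geometric times remain backward contracted in the limit, and are genuinely unstable with exponent at least $\alpha$.

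The rising-sun lemma then gives two-sided control on neutral blocks. If $a<b$ are consecutive $\alpha$-hyperbolic times, then
$$\alpha(b-a)\le\sum_{a\le j<b}\phi(\hat f^j\hat x)\le\alpha(b-a)+\log\|Df\|_\infty.$$
So neutral blocks carry a nonnegative exponent of order $\alpha$ per unit time.

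The invariant pieces $\mu_0,\mu_1$ must then be built from this block structure, not by conditioning on good times. The remaining $O(\alpha)$ discrepancy has to be removed, e.g.\ by letting $\alpha\to0$.

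Entropy needs the same care. Upper semicontinuity alone only gives $\lim_k h(f,\nu_k)\le h(f,\mu)=(1-\beta)h(f,\mu_0)+\beta h(f,\mu_1)$, which is weaker than the claim whenever $h(f,\mu_0)>0$. The gain comes from counting reparametrizations of a Pesin local unstable curve carrying the conditional measures of $\nu_k$, not just any curve tangent to $E^u_x$. One must then show that this entropy is created only along geometric times.
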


Let us comment briefly on \Cref{thmECLE} and its  somewhat delicate proof.

\medbreak
\paragraph{A relative version of Ruelle inequality?} It is tempting to compare it with Ruelle's inequality, \cref{thmRuelleInequ}. Indeed, specializing to ergodic limit measure $\mu$, we have the following corollary: {\it Let $\nu_k\in\Proberg(f)$ weakly converge to some $\mu$ assumed to be ergodic. Assume that the limits $\lim_k h(f,\nu_k)$, $\lim_k \lambda^1(f,\nu_k)$ exist and are positive. Then:}
 $$
    \limsup_k \frac{h(f,\nu_k)}{\lambda^1(f,\nu_k)} \le \frac{h(f,\mu)}{\lambda^1(f,\mu)}.
 $$
This proves the upper semicontinuity of the Hausdorff dimension over $\Proberg(f)$.

This may look like a version of \cref{thmRuelleInequ}.
However, the settings are different: Ruelle's inequality holds for all $C^1$-diffeomorphisms whereas the above fails in $C^1$ smoothness.

\medbreak
\paragraph{An asymmetric, suboptimal bound.}
Replacing $f$ by $f^{-1}$ may change the decomposition $\mu=(1-\beta)\mu_0+\beta\mu_1$. In particular, the bound on entropy may fail to be optimal. This asymmetry is also a difficulty in trying to maximize the Hausdorff dimension of invariant measures (see \cite{Buzzi-IMPAN-2026}).

\medbreak
\paragraph{Ingredients of the proof.}
Underlining the difference between this continuity property and Ruelle's inequality, we note that  its proof involves more sophisticated tools: first Pesin theory and especially Yomdin theory. The last in particular requires high smoothness.

\medbreak
We refer the interested reader to the original paper \cite{BCS-ECLE}. The lecture notes \cite{Buzzi-IMPAN-2026} review the techniques of the proof  of  \cref{thmECLE} and present it in details. Burguet \cite{Burguet-Decomposition} has extended this decomposition at the topological entropy to finite smoothness ($C^r$ if $\htop(f)>\lambda(f)/r$ where $\lambda(f)$ is the logarithm of the Lipschitz constant). These techniques can be applied to $3$-dimensional diffeomorphisms or flows \cite{Zang-arxiv-2025} since the entropy can still be read off curves. When there are critical points, new difficulties appear and they have been considered for interval maps \cite{Li-arxiv-2024}. 

\subsection{Surface SPR and continuity of Lyapunov exponents.}
On surfaces, the SPR property turns out to be \emph{equivalent} to the convergence of exponents. 

\begin{theorem}[B-Crovisier-Sarig \cite{BCS-SPR}]\label{thmSPRECLEsurf}
Let $f$ be a diffeomorphism of a compact surface with positive entropy. If $f$ is $C^{1+}$-smooth then $f$ is SPR if and only if the following holds:

(*) there is $\chi>0$ such that, for any sequence of measures $\nu_k\in\Proberg(f)$ with some weak limit $\mu$, if $\lim_k h(f,\nu_k)=\htop(f)>0$, then $\mu$ is $\chi$-hyperbolic and the exponents: $\lim_k\lambda^i(f,\nu_k)=\lambda^i(f,\mu)$ for each $i=1,2$. 
\end{theorem}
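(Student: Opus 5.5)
We prove the two implications separately, working throughout with ergodic measures $\nu_k$ whose entropy tends to $\htop(f)$. We use compactness of $\Prob(f)$, the upper semicontinuity of $\nu\mapsto\lambda^1(f,\nu)$, and \cref{remBothC0}, which says it suffices to control $\lambda^1$ since $\lambda^1+\lambda^2$ is weakly continuous.

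\emph{SPR implies (*).} Let $\chi_0$ be given by \cref{def-diffeo-SPR}. Take $\nu_k\to\mu$ with $h(f,\nu_k)\to\htop(f)$, fix $\epsilon$, and let $C_0,m_0$ be the corresponding constants, so that $\nu_k(\Lambda(\chi_0,\epsilon,C_0))\ge m_0$ for $k$ large.

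\begin{enumerate}
\item On the compact block $\Lambda=\Lambda(\chi_0,\epsilon,C_0)$ the splitting is continuous. The finite-time quantities $\frac1n\log\|Df^n|F\|$ are uniformly controlled there, up to errors $O(\epsilon)+O(\log C_0/n)$.
\item We pass to the induced (first return) maps on $\Lambda$. Since the $\nu_k$-mass of $\Lambda$ is bounded below, Kac's formula bounds the mean return times uniformly.
\item We would then prove $\liminf_k\lambda^1(f,\nu_k)\ge\lambda^1(f,\mu)$. Combined with upper semicontinuity, this gives convergence of $\lambda^1$, and hence of both exponents by \cref{remBothC0}.
\end{enumerate}

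A cleaner route to step 3 is by contradiction. Suppose some part of the exponent is lost in the limit. Then a portion of mass at least $m_0$, lying in the block, would have to accumulate on points whose unstable expansion is $<\chi_0-\epsilon$. This is impossible because the block condition passes to weak limits: the set $\Lambda$ is closed, and the cocycle bounds on it are closed conditions. The same argument gives $\mu(\Lambda)\ge m_0$.

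For $\chi$-hyperbolicity of $\mu$, we also need every ergodic component of $\mu$ to charge blocks. Here we use that $h(f,\mu)=\htop(f)$ by upper semicontinuity of entropy, which holds for $C^\infty$ maps. The general $C^{1+}$ case is more delicate: we would instead argue that any component with small exponent forces a loss of entropy, and use SPR applied to the $\nu_k$.

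\emph{(*) implies SPR.} We argue by contradiction. Suppose that for some $\epsilon$ there are ergodic $\nu_k$ with $h(f,\nu_k)\to\htop(f)$ and $\nu_k(\Lambda(\chi_0,\epsilon,C))\to0$ for every $C$ (taking a diagonal sequence). Extract a weak limit $\mu$; by (*), $\mu$ is $\chi$-hyperbolic and $\lambda^i(f,\nu_k)\to\lambda^i(f,\mu)$.

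\begin{enumerate}
\item Take $\chi_0<\chi$. By \cref{propPesinBlock} applied to $\mu$, some block $\Lambda(\chi_0,\epsilon/2,C)$ has $\mu$-measure close to $1$.
\item The key claim is that convergence of the exponents $\lambda^i(f,\nu_k)$ forces $\nu_k$ to give definite mass to a slightly larger block. To prove it, use the uniform finite-time estimate $\int\frac1N\log\|Df^N|E^u\|\,d\nu_k\to\lambda^1(f,\mu)$, which holds thanks to the convergence of exponents together with continuity of the unstable cone field near $\Lambda$. Then apply a Pliss-type lemma: a positive-density set of times $n$ is $(\chi_0,\epsilon)$-hyperbolic times in both directions, and these times produce points of $\Lambda(\chi_0,\epsilon,C')$ with $\nu_k$-mass bounded below.
\item This contradicts $\nu_k(\Lambda(\chi_0,\epsilon,C'))\to0$.
\end{enumerate}

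The \textbf{main obstacle} is step 2 of the converse. The difficulty is that weak convergence does not transport the invariant splittings of the $\nu_k$ to that of $\mu$. I would first try to handle this with cone fields: use a dominated-type estimate along the block of $\mu$ to show that, for $\nu_k$-typical points spending time near this block, the unstable directions lie in the cones. Then apply Pliss's lemma to the additive sequence $\log\|Df|E^u\|$, with $\chi$-hyperbolicity of $\mu$ ensuring that the average exceeds $\chi_0+\epsilon$.
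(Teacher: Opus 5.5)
\textbf{SPR $\Rightarrow$ (*).} Your ``cleaner route'' does not work. Lift each $\nu_k$ to the projective bundle $PTM$ via $x\mapsto E^u_x$, and let $\hat\mu$ be a limit point. Even granting that $\mu$ is hyperbolic, any $Df$-invariant lift of $\mu$ is carried by $E^u(\mu)\cup E^s(\mu)$. So $\hat\mu=\int(\alpha(x)\,\delta_{E^u_x}+(1-\alpha(x))\,\delta_{E^s_x})\,d\mu(x)$ for some invariant $0\le\alpha\le1$, and $\lim_k\lambda^1(f,\nu_k)=\int(\alpha\lambda^1+(1-\alpha)\lambda^2)\,d\mu$. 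The bound $\nu_k(\Lambda)\ge m_0$, closedness of $\Lambda$ and continuity of $F$ on $\Lambda$ only tell you that the part of $\hat\mu$ coming from the block lies on $E^u(\mu)$, i.e.\ $\int\alpha\,d\mu\ge m_0$. The remaining mass, up to $1-m_0$, is unconstrained, so exponent can be lost off the block. Neither your closedness step nor the Kac/induced-map step (which only bounds mean return times) detects this. Upgrading ``a fraction $m_0$'' to full convergence is a genuine rigidity statement, and the paper obtains it from symbolic dynamics:
by \cref{propGlobalSPR}, for large $k$ the $\nu_k$ lie in one of finitely many SPR Borel homoclinic classes;
these classes are coded by SPR Markov shifts, in which measures with entropy tending to the top entropy have lifts converging to the unique MME (no escape of mass);
$\log\|Df|E^u\|$ is bounded and continuous on the shift, so the exponents converge;
the limit $\mu$ is the MME of the class, hence $\chi$-hyperbolic by \cref{thmRuelleInequ}.
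This also settles the $C^{1+}$ hyperbolicity of $\mu$ that you left open. A coding-free proof of this direction is listed in the paper as an open question.

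\textbf{(*) $\Rightarrow$ SPR.} Your outline (contradiction, limit measure, Pliss) is the paper's, but the obstacle you flag is resolved by the same lift, not by cone fields. Since $\lambda^1>\lambda^2$ $\mu$-a.e., the convergence $\lambda^1(f,\nu_k)=\int\log\|D_xf.v\|\,d\hat\nu_k\to\lambda^1(f,\mu)$ forces $\alpha\equiv1$, i.e.\ $\hat\nu_k\to\hat\mu^u$, the lift of $\mu$ on $E^u(\mu)$. Choose $N$ (depending on $\mu$) with $\mu\{x:\frac1N\log\|D_xf^N|E^u_x\|>\chi'\}>1-\gamma$. The corresponding open subset of $PTM$ then has $\hat\nu_k$-mass $>1-\gamma$ for large $k$. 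Doing the same for $f^{-1}$ and $E^s$ gives a set of $\nu_k$-measure close to $1$ on which $f^N$ expands $E^u$ and contracts $E^s$. Only because this measure is close to $1$ does Pliss' lemma (or the maximal ergodic inequality) for $f^N$ produce points that are hyperbolic times in \emph{both} directions with slow decay. Your time-one Pliss step uses only $\lambda^1(f,\nu_k)>\chi_0+\epsilon$. That gives each direction a density of hyperbolic times of order $(\lambda^1-\chi')/(\log\max\|Df\|-\chi')$, possibly below $1/2$, hence possibly no common times. Such an argument cannot suffice, since its input also holds (by \cref{thmRuelleInequ}) for the non-SPR diffeomorphisms of \cref{coroNoMaxNoSPR}. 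Likewise, cones around the block of $\mu$ do not by themselves force $E^u(\nu_k)$ into them; that is exactly what the convergence of exponents provides, through the lift.
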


\begin{proof}[Proof of \cref{thmSurfaceSPR}]
Taking $\lim_k h(f,\nu_k)=\htop(f)>0$, we get the continuity of the exponent immediately from \cref{thmECLE-htop-surf} to get $\lim_k \lambda^i(f,\nu_k)=\lambda^i(f,\mu)$ for $i=1$. By \cref{remBothC0}, this gives it also for $i=2$. 

We then check that the limit $\mu$ is $\chi$-hyperbolic for $\chi:=\htop(f)/2>0$ independent of $(\nu_k)_{k\ge1}$. Since $f$ is $C^\infty$ smooth, the entropy is upper semicontinuous by a result of Newhouse \cite{Newhouse-Continuity} so $h(f,\mu)=\hTOP(f)$, hence almost every ergodic component of $\mu$ is an MME so $\mu$ is $\chi$-hyperbolic for any $0<\chi<\htop(f)$.

We may now apply \cref{thmSPRECLEsurf}.
\end{proof}

\begin{remark}
C. Luo and D. Yang \cite{luo-yang-2025-uscEntropy} have shown that \cref{thmSPRECLEsurf} remains true if one removes the condition that $\mu$ is $\chi$-hyperbolic. They deduce from the convergence of the exponents that, even in $C^{1+}$-smoothness, the limit $\mu$ is a measure of entropy $\htop(f)>0$. Thus it is $\htop(f)/2$-hyperbolic by the same argument as above.
\end{remark}

We now outline the proof of \cref{thmSPRECLEsurf} (note that it is the restriction of \ref{thm-EHEC} to surfaces).
\medbreak

The SPR property follows from the convergence (*) by a \emph{purely $C^1$ argument}, that we now sketch. To prove SPR, we need to find a subset of the tangent bundle $TM$  which has lower bounded measure and is  expanded (or contracted) by some fixed iterate $Df^N$. The Oseledets theorem provides an iterate $N$ which depends on the measure. However, one can deduce from the convergence of the exponents in (*)  that the measures (lifted to the unstable section $x\mapsto E_x^+$) actually converge. This gives a subset of $M$ with measure close to $1$ and an expanded direction. One can do the same for the contraction and take the intersection. Using that the resulting subset has measure close enough to $1$, a combinatorial argument (based on Pliss lemma, or, equivalently, the maximal ergodic inequality) yields the slow decay and therefore the Pesin block. This proves SPR, and in fact slightly more:
\medbreak
{\sl (ET) There is $\chi>0$ such that, for every $\epsilon,\gamma>0$, there are $h<\htop(f)$ and $C>0$ satisfying: 
 $$
   \forall\mu\in\Proberg(f)\quad h(f,\mu)>h \implies \mu\left(\Lambda(\chi,\epsilon,C)\right) > 1-\gamma.
 $$
}
\medbreak
In \cite{BCS-SPR},  (ET) is called entropy-tightness.
\medbreak

The converse implication, that SPR implies the convergence (*), is proved in \cite{BCS-SPR} by relying on symbolic dynamics (see below). The construction of the coding however requires Pesin theory and therefore $C^{1+}$ smoothness. We believe that this is only technical:

\begin{question}
Do all SPR $C^1$-diffeomorphisms on compact surfaces satisfy the continuity property (*) ?
\end{question}

\subsection{Effective intrinsic ergodicity.}
The following phenomenon was discovered recently for uniformly hyperbolic dynamics by Kadyrov \cite{Kadyrov-2015}. We will see that this extends to the nonuniformly hyperbolic, SPR setting, but also yields a striking \emph{rigidity property}: if the Lyapunov exponents converge, then (the diffeomorphism is SPR and therefore) the exponent must converge at an explicit speed. 

\begin{theorem}[B-Crovisier-Sarig \cite{BCS-SPR}]\label{thmExFinBernoulli}
Let $f\in\Diff^{1+}(M^2)$ which is SPR with a unique measure maximizing the entropy $m$. Then there is $C>0$ such that
 $
  \forall\mu\in\Prob(f)\quad \left|\lambda^1(f,m)-\lambda^1(f,\mu)\right| \leq C\sqrt{h(f,m)-h(f,\mu)}.
 $
\end{theorem}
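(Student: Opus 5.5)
The plan is to pass to symbolic dynamics and use the spectral gap available for SPR countable Markov shifts. The bound is an effective version of intrinsic ergodicity in the style of Kadyrov's theorem for subshifts of finite type. For measures $\mu$ of low entropy the statement is trivial: $|\lambda^1(f,m)-\lambda^1(f,\mu)|\le 2\log\max(\|Df\|,\|Df^{-1}\|)$ is bounded, while $h(f,m)-h(f,\mu)$ is bounded below. By affinity of entropy and exponents in $\mu$, I would reduce to ergodic $\mu$ with $h(f,\mu)>h_0$ close to $\htop(f)$ (the non-ergodic case follows by decomposing and using concavity of the square root).

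\textbf{Step 1: coding.} Take the Sarig/Ben Ovadia Markov coding of $\chi$-hyperbolic measures, restricted to the irreducible component carrying $m$. By the SPR hypothesis (\cref{def-diffeo-SPR}), the induced countable Markov shift $(\Sigma,\sigma)$ is SPR in the sense of Gurevich--Sarig. Every ergodic $\mu$ of large entropy lifts to some $\hat\mu$ on $\Sigma$ with the same entropy. On the shift, $\lambda^1$ is given by the integral of the geometric potential $\psi(\underline x)=\log\|Df|_{E^u(\pi\underline x)}\|$. This $\psi$ is H\"older continuous for the symbolic metric but unbounded, though it is bounded by $\log\|Df\|$ pointwise. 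So $\lambda^1(f,\mu)=\int\psi\,d\hat\mu$.

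\textbf{Step 2: pressure argument.} For an SPR shift and a bounded H\"older potential, $t\mapsto P(t\psi)$ is analytic near $t=0$, with $P'(0)=\int\psi\,dm$ and $P''$ bounded by some $\sigma^2$ for $|t|\le t_0$. This is the spectral-gap consequence of SPR (Cyr--Sarig). The variational principle gives, for every $t$,
\[
h(\hat\mu)+t\int\psi\,d\hat\mu\le P(t\psi)\le \htop + t\lambda^1(f,m)+\tfrac{\sigma^2}{2}t^2 .
\]
Hence $t\bigl(\lambda^1(f,\mu)-\lambda^1(f,m)\bigr)\le (\htop-h(f,\mu))+\tfrac{\sigma^2}{2}t^2$. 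Optimizing with $t=\pm\sqrt{2(\htop-h)}/\sigma$, which is admissible once $h$ is close to $\htop$, gives the bound with $C=\sqrt2\,\sigma$ in both directions.

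\textbf{Main obstacle.} The hard step is justifying the quadratic pressure bound uniformly in $|t|\le t_0$. The potential $\psi$ is not uniformly H\"older on the countable alphabet, since the H\"older constants deteriorate with the Pesin-block parameters. Also, SPR of the shift must persist under the perturbation $t\psi$, i.e. the $t\psi$-pressure gap at infinity must stay positive for small $t$. I would first handle this by inducing on a single state (a Pesin-block cylinder) and working with the induced potential $\Psi-P\cdot\tau$. The return time $\tau$ has exponential tails by SPR, and $|\Psi|\le\tau\log\|Df\|$, so the induced transfer operator is analytic in $(t,P)$ for small $|t|$. A smaller concern is that the coding is only finite-to-one and might miss measures. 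That can be handled because, for $C^{1+}$ maps with SPR and large entropy, every relevant ergodic measure is $\chi_0$-hyperbolic and homoclinically related to $m$, so it lifts to the same irreducible component.
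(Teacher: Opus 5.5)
Your proposal is correct and is essentially the paper's argument. You code the unique top-entropy Borel homoclinic class (the one carrying $m$) by an irreducible SPR Markov shift, write $\lambda^1$ as the integral of the lifted geometric potential, and apply effective intrinsic ergodicity for SPR shifts (the Ruhr--Sarig result the paper cites), which you re-derive by the second-order bound on $t\mapsto P(t\psi)$ plus the variational principle. The ``main obstacle'' you flag is not really one: $\underline x\mapsto\log\|Df|_{T_{\pi(\underline x)}V^u(\underline x)}\|$ is bounded and, as I understand it, uniformly H\"older on $\Sigma$ for the Sarig/Ben~Ovadia coding, because the chart sizes shrink like a power of the chart distortion and compensate for it; so the inducing detour is unnecessary.
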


The proof of this result relies on the coding, where such a result was established by Ruhr and Sarig.

\medbreak

We give two  corollaries.
The first one strengthens \cref{thmECLE-htop-surf} by giving the speed of the convergence (under the same assumptions):

\begin{corollary}
Let $f\in\Diff^\infty(M^2)$ with $\htop(f)>0$. For any $\nu_k\in\Proberg(f)$ weakly converging to $m\in\Prob(f)$ with $\lim_k h(f,\nu_k)=\htop(f)$, then, for all $i=1,2$: 
 $$
   \limsup_k  \frac{|\lambda^i(f,m)-\lambda^i(f,\nu_k)|}{\sqrt{h(f,m)-h(f,\nu_k)}}<\infty.
 $$
\end{corollary}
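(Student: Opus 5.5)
The plan is to reduce to \cref{thmExFinBernoulli}. That theorem assumes a unique measure maximizing the entropy, which the corollary does not, so the main work is to localize to pieces where uniqueness holds. First, by \cref{thmSurfaceSPR}, $f$ is SPR, and so is $f^{-1}$: it is again a $C^\infty$ surface diffeomorphism with the same positive topological entropy. Second, since $\lambda^2(f,\mu)=-\lambda^1(f^{-1},\mu)$ and entropies are unchanged under inversion, the case $i=2$ follows from the case $i=1$ applied to $f^{-1}$. This avoids having to control $\int\log|\det Df|\,d\nu_k$ at speed $\sqrt{\cdot}$ through \cref{remBothC0}. So it suffices to treat $i=1$.

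\emph{Step 1 (localization).} I would use the finiteness result of \cite{BCS-MME}, which the paper presents later through Borel homoclinic classes. A $C^\infty$ surface diffeomorphism with $\htop(f)>0$ has finitely many ergodic MMEs $m_1,\dots,m_N$. Moreover, there are $h_*<\htop(f)$ and finitely many invariant Borel sets $X_1,\dots,X_N$ (the corresponding homoclinic classes) with two properties: every ergodic measure of entropy $>h_*$ is carried by some $X_j$, and $f|_{X_j}$ is SPR with $m_j$ as its unique MME. Applying the version of \cref{thmExFinBernoulli} for $f|_{X_j}$ gives constants $C_j$ with
$$|\lambda^1(f,m_j)-\lambda^1(f,\nu)|\le C_j\sqrt{\htop(f)-h(f,\nu)}$$
for all $\nu\in\Proberg(f)$ carried by $X_j$. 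Set $C:=\max_j C_j$.

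\emph{Step 2 (identifying the exponent of the limit).} By Newhouse's upper semicontinuity, $h(f,m)=\htop(f)$. We cannot identify $m$ with a single $m_j$ a priori, but we do not need to. For large $k$, each $\nu_k$ lies in some $X_{j(k)}$. Split the sequence into at most $N$ subsequences along which $j(k)=j$ is constant, and discard the finite ones. Along such a subsequence, Step 1 gives $\lambda^1(f,\nu_k)\to\lambda^1(f,m_j)$, while \cref{thmECLE-htop-surf} gives $\lambda^1(f,\nu_k)\to\lambda^1(f,m)$. Hence $\lambda^1(f,m)=\lambda^1(f,m_j)$, and so
$$|\lambda^1(f,m)-\lambda^1(f,\nu_k)|\le C\sqrt{h(f,m)-h(f,\nu_k)}$$
along every such subsequence. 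Since there are finitely many subsequences, the $\limsup$ in the statement is at most $C$. When $h(f,\nu_k)=\htop(f)$, the inequality forces the numerator to vanish, and the ratio is read as $0$.

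\emph{Main obstacle.} The delicate point is Step 1. It requires the relative form of \cref{thmExFinBernoulli}, namely SPR on each homoclinic class $X_j$ with a unique MME there, rather than on all of $M$. It also requires the fact that nearly maximal ergodic measures fall into one of finitely many classes. I would take both from the homoclinic-class theory of \cite{BCS-MME,BCS-SPR}, which the paper develops in the later sections. The rest is bookkeeping with subsequences and the inversion $f\mapsto f^{-1}$.
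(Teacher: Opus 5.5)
Your proposal is correct and is essentially the deduction the paper has in mind. You combine SPR from \cref{thmSurfaceSPR} with the square-root estimate, and localize to the finitely many top-entropy Borel homoclinic classes via \cref{propGlobalSPR} and \cref{CorLocalMME}; that localization is exactly what is needed, because \cref{thmExFinBernoulli} assumes a unique MME. The relative estimate you ask for is stated in the paper as \cref{thm-CE-SPR}, since $\Lambda^+=\lambda^1$ for the saddle-type measures of a positive-entropy class on a surface, and your passage to $f^{-1}$ for $i=2$ correctly avoids the missing rate in \cref{remBothC0}.
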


The second one can be understood as a \emph{rigidity phenomenon}:

\begin{corollary}
Let $f\in\Diff^{1+}(M^2)$ with $\htop(f)>0$. Assume that, for any $\nu_k\in\Proberg(f)$ weakly converging to $m\in\Prob(f)$ with $\lim_k h(f,\nu_k)=\htop(f)$, 
 $$
    |\lambda^i(f,m)-\lambda^i(f,\nu_k)| \stackrel{k\to\infty}\longrightarrow 0
 $$
then, for any such sequence $\nu_k$ and limit $m$, as $k\to\infty$,
 $$
    |\lambda^i(f,m)-\lambda^i(f,\nu_k)| = \mathcal O(\sqrt{h(f,m)-h(f,\nu_k)}).
 $$
\end{corollary}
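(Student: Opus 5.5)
The plan is to reduce to \cref{thmExFinBernoulli}, using \cref{thmSPRECLEsurf} to get SPR from the assumed convergence, and then an ergodic-decomposition argument to handle non-unique MMEs.

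First, the hypothesis says that whenever $\nu_k\in\Proberg(f)$ converge weakly to $m$ with $h(f,\nu_k)\to\htop(f)$, the exponents converge. By the remark of Luo--Yang following \cref{thmSPRECLEsurf}, this convergence already forces $h(f,m)=\htop(f)$. Hence almost every ergodic component of $m$ is an MME and is $\htop(f)/2$-hyperbolic by \cref{thmRuelleInequ}. Thus condition (*) of \cref{thmSPRECLEsurf} holds with $\chi=\htop(f)/2$, and $f$ is SPR.

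Second, we need the square-root bound even though the MME may not be unique. SPR yields finitely many ergodic MMEs $m_1,\dots,m_p$; this is the kind of consequence announced in the text, to be used from the later sections via the coding. Any limit $m$ as above is therefore a convex combination $\sum_j t_j m_j$.

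Third, apply the local version of \cref{thmExFinBernoulli}: on each irreducible piece (Borel homoclinic class / irreducible SPR Markov shift) carrying $m_j$, the Ruhr--Sarig inequality gives $|\lambda^1(f,m_j)-\lambda^1(f,\nu)|\le C\sqrt{h(f,m_j)-h(f,\nu)}$ for every ergodic $\nu$ carried by that piece. An ergodic $\nu_k$ with entropy close to $\htop(f)$ is carried by one such class, at least once $k$ is large, since ergodic measures of large entropy are hyperbolic and homoclinically related to some MME class. So $\nu_k$ is associated with some index $j(k)$.

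The main obstacle is matching $j(k)$ with the limit $m$. If $m=\sum t_j m_j$ has several nonzero weights, the claimed estimate compares $\lambda^i(f,m)$, not $\lambda^i(f,m_{j(k)})$. The approach is to pass to subsequences on which $j(k)=j$ is constant. By the effective intrinsic ergodicity on class $j$ (a quantitative uniqueness), $\nu_k\to m_j$ along that subsequence, so $m=m_j$ whenever the full sequence converges. The desired $\mathcal O$ bound then follows from the class-$j$ inequality. For $i=2$, use \cref{remBothC0}: $\lambda^2(f,\cdot)=\int\log|\det Df|-\lambda^1(f,\cdot)$. The difference $\int\log|\det Df|\,d(m-\nu_k)$ is also controlled by $C\sqrt{h(f,m)-h(f,\nu_k)}$. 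This uses the same Ruhr--Sarig estimate, which holds for Hölder observables via the coding: $\log|\det Df|$ lifts to a Hölder potential. That is the only place where care about which observables are allowed is needed.
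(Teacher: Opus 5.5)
Your proposal is correct and follows the paper's route: the assumed convergence of exponents gives SPR through \cref{thmSPRECLEsurf} (in the Luo--Yang form, which also yields $h(f,m)=\htop(f)$), and the square-root bound then comes from the Ruhr--Sarig effective intrinsic ergodicity transported through the coding, i.e.\ \cref{thmExFinBernoulli} or its local form \cref{thm-CE-SPR}. Your extra step of placing $\nu_k$, for large $k$, in a single top-entropy Borel homoclinic class and proving $m=m_j$ correctly covers the case of several MMEs, which \cref{thmExFinBernoulli} avoids by assuming uniqueness; for $i=2$ you could also just apply the $i=1$ bound to $f^{-1}$.
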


\section{Homoclinic classes, Markov shifts, and codings.}
To express fully  our results, we need to consider the following notions  of irreducibility and periodicity and their MMEs and symbolic dynamics. This collects the constructions from a number of papers, including \cite{Sarig-JAMS,BenOvadia-JMD,BCS-MME,BCS-SPR}.

\subsection{Homoclinic classes.}
The  notion of homoclinic relation was introduced by Newhouse and adapted to the nonuniformly hyperbolic setting by \cite{RRTU-transitive}.
Let $\Probhyp(f)\subset\Proberg(f)$ be the set of Pesin hyperbolic ergodic measures.
If $A,B$ are two immersed submanifold, $A\pitchfork B$ denotes the set of transverse intersection points (i.e., $x\in A\cap B$ such that $T_xA+T_xB=T_xM$).

\begin{definition}[B-Crovisier-Sarig]
Two measures $\mu_1,\mu_2\in\Probhyp(f)$ are \emph{homoclinically related} if for $\mu_1$-a.e. $x$ and for $\mu_2$-a.e. $y$,
 \begin{equation}\label{eq-homrel}
    W^u(f,x)\pitchfork W^s(f,f^k(y))\ne\emptyset \text{ and }
    W^s(f,x)\pitchfork W^u(f,f^\ell(y))\ne\emptyset \text{ for some integers }k,\ell.
 \end{equation}
We write $\mu_1\homrel\mu_2$ and call $\HC(\mu):=\{\nu\in\Probhyp(f):\nu\homrel\mu\}$ the \emph{homoclinic class} of $\mu$. We extend this notion to hyperbolic periodic orbits through the identification $\mathcal O\mapsto \delta_{\mathcal O}:=\frac1{\#\mathcal O}\sum_{p\in\mathcal O} \delta_p$, the unique invariant measure carried by $\mathcal O$.
\end{definition}

\newcommand\per{\operatorname{per}}

The following is immediate, but important:

\begin{fact}
Every measure $\mu$ in a homoclinic class $H$ is ergodic and hyperbolic. Moreover the \emph{unstable dimension}: $d^u(\mu):=\max\{1\le i\le d:\lambda^i(f,x)>0$ $\mu$-a.e.$\}$ is the same for all $\mu\in H$.
\end{fact}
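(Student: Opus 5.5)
The first claim is built into the definition, since $\HC(\mu)$ is by definition a subset of $\Probhyp(f)\subset\Proberg(f)$. There is one point of interpretation to settle: the measures are elements of a homoclinic class $H=\HC(\mu_0)$, and they are compared with the base measure $\mu_0$ and with each other only through the relation $\homrel$. Ergodicity and hyperbolicity therefore come for free. The real content is that the unstable dimension is constant on $H$.

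For the second claim, I take $\mu_1,\mu_2\in H$. I first check that they are homoclinically related. If the relation is not known to be transitive, I compare each $\mu_i$ with the base measure $\mu_0$ instead, since $\mu_i\homrel\mu_0$ by definition of $H$; proving $d^u(\mu_i)=d^u(\mu_0)$ for each $i$ is enough. So it suffices to show that $\mu_1\homrel\mu_2$ implies $d^u(\mu_1)=d^u(\mu_2)$.

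Apply \cref{thmPesinStable} to both measures, and pick $x$ generic for $\mu_1$ and $y$ generic for $\mu_2$ so that the Pesin manifolds exist and \eqref{eq-homrel} holds. Since $\mu_1$ is ergodic and hyperbolic, for generic $x$ the tangent space of $W^u(f,x)$ is $E^+_x$ and that of $W^s(f,x)$ is $E^-_x$. Hence $\dim W^u(f,x)=d^u(\mu_1)$ and $\dim W^s(f,x)=d-d^u(\mu_1)$. The Pesin manifolds are immersed manifolds of constant dimension, so the same holds at every point of the manifold, including along the orbit of $y$ with $d^u(\mu_2)$.

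Now $W^u(f,x)\pitchfork W^s(f,f^k(y))\ne\emptyset$. At a transverse intersection point the two tangent spaces span $T_zM$. This gives $d^u(\mu_1)+(d-d^u(\mu_2))\ge d$, that is, $d^u(\mu_1)\ge d^u(\mu_2)$. Symmetrically, $W^s(f,x)\pitchfork W^u(f,f^\ell(y))\ne\emptyset$ gives $(d-d^u(\mu_1))+d^u(\mu_2)\ge d$, that is, $d^u(\mu_2)\ge d^u(\mu_1)$. So the two unstable dimensions are equal.

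The only delicate point is a degenerate case: a measure in $H$ with no positive exponent (a sink) or no negative exponent (a source). There $d^u$ as defined via $\max$ over an empty set needs a convention, say $d^u=0$, and one of the Pesin manifolds is a point. The dimension count above still applies verbatim with the convention $d^u\in\{0,\dots,d\}$. With that convention fixed, I expect no real obstacle; the argument is pure dimension counting.
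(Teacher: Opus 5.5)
Your proof is correct and is exactly the argument the paper has in mind when it calls this fact immediate (it gives no written proof). Ergodicity and hyperbolicity hold by definition since $\HC(\mu)\subset\Probhyp(f)\subset\Proberg(f)$, and the two transverse intersections in \eqref{eq-homrel} give $d^u(\mu_1)\ge d^u(\mu_2)$ and $d^u(\mu_2)\ge d^u(\mu_1)$ by dimension counting. Your convention $d^u=0$ for sinks (which are Pesin-hyperbolic but make the paper's $\max$ empty) is a sensible clarification that the paper leaves implicit.
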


Using Pesin theory and Katok's horseshoe theorem, one proves (see \cite{BCS-MME}):

\begin{lemma}
The homoclinic relation $\homrel$ is an equivalence relation on $\Probhyp(f)$ which extends the classical one between hyperbolic periodic orbits.  Moreover, every homoclinic class of measures contains a hyperbolic periodic orbit. 

Additionally, for any homoclinic class of measures $H\subset\Probhyp(f)$, there is an invariant Borel subset $X_H\subset M$ with the following properties: 
\begin{enumerate}
\item $\forall\mu\in\Proberg(f)$ $\mu\in H\iff\mu(X_H)=1$;
\item for any $x,y\in X_H$, the Pesin manifolds are well-defined at $x,y$ and \eqref{eq-homrel} holds;
\item if $y$ belongs to $X_{H'}$ for a distinct class $H'$, then \eqref{eq-homrel} does not hold
\end{enumerate}
In particular, the Borel homoclinic classes are pairwise disjoint and in bijection with the homoclinic classes of measures.
\end{lemma}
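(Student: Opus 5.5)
The plan is to prove the three parts in order: the equivalence relation, the periodic orbit, and then the Borel set $X_H$.

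\emph{Reflexivity and symmetry.} Symmetry holds because \eqref{eq-homrel} is symmetric in the roles of $x$ and $y$ up to shifting the indices $k,\ell$. For reflexivity, take a hyperbolic ergodic $\mu$ and fix a Pesin block $\Lambda(\chi,\epsilon,C)$ of positive measure, which exists by \cref{propPesinBlock}. On this block the local stable and unstable manifolds have uniform size and vary continuously. Any two points $x,y$ of the block that are close enough therefore satisfy $W^u_{loc}(x)\pitchfork W^s_{loc}(y)\neq\emptyset$, with transversality coming from the continuity of the splitting. By Poincar\'e recurrence and ergodicity, for $\mu$-a.e. $x,y$ some iterate $f^k(y)$ enters the block close to a density point near $x$. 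This gives \eqref{eq-homrel} for $\mu$ with itself.

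\emph{Periodic orbit and transitivity.} By Katok's horseshoe theorem, and more precisely Katok's closing lemma on Pesin blocks, there is a hyperbolic periodic orbit $\mathcal O$ shadowing a recurrent segment of a $\mu$-typical orbit inside a small ball of the block. Its local invariant manifolds are $C^1$-close to those of nearby block points. Hence $\mathcal O\homrel\mu$, which proves that every class contains a periodic orbit.

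Transitivity then reduces to the classical inclination lemma ($\lambda$-lemma). Suppose $\mu_1\homrel\mu_2$ and $\mu_2\homrel\mu_3$. Replace $\mu_2$ by a related periodic orbit $\mathcal O$. The unstable manifold $W^u(x)$ of a $\mu_1$-typical point meets $W^s(\mathcal O)$ transversally. Under iteration, $W^u(x)$ then accumulates in $C^1$ on $W^u(\mathcal O)$. Since $W^u(\mathcal O)$ crosses $W^s(f^k z)$ transversally for $\mu_3$-typical $z$, and transversality is open, we get $W^u(x)\pitchfork W^s(f^{k'}z)\neq\emptyset$. Doing the same with stable and unstable exchanged gives $\mu_1\homrel\mu_3$. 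Restricted to periodic orbits, this is the classical relation.

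\emph{Construction of $X_H$.} Fix $\mathcal O\in H$ and define $X_H$ as the set of points $x$ satisfying three conditions:
\begin{enumerate}
\item $x$ is Oseledets-regular and hyperbolic with $d^u=d^u(\mathcal O)$;
\item $x$ lies in some $\Lambda(\chi,\epsilon,C)$ for rational parameters, up to iteration;
\item $W^u(x)\pitchfork W^s(\mathcal O)\neq\emptyset$ and $W^s(x)\pitchfork W^u(\mathcal O)\neq\emptyset$.
\end{enumerate}
This set is invariant, and it is Borel because $x\mapsto W^{s/u}(x)$ is measurable on blocks and transverse intersection with a fixed compact piece of $W^{s/u}(\mathcal O)$ is an open condition.

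Each property then follows from earlier steps. Property (2) follows from the $\lambda$-lemma argument applied pointwise through $\mathcal O$. Property (1) holds because every $\mu\in H$ is related to $\mathcal O$, so $\mu$-a.e. point satisfies (iii). Conversely, if $\mu(X_H)=1$, the pointwise relation to $\mathcal O$ gives $\mu\homrel\mathcal O$. Property (3) holds because a point in both $X_H$ and $X_{H'}$ would relate $\mathcal O$ to $\mathcal O'$, forcing $H=H'$. The bijection between Borel classes and classes of measures then follows.

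\emph{Main obstacle.} I expect the difficulty to be in the measurability of $X_H$ and in making the transverse intersections robust in the $\lambda$-lemma. Pesin manifolds have uncontrolled global size, so one must work with local manifolds on blocks and push forward by iterates. I would handle this by writing $X_H$ as a countable union over block parameters and iterates, and using the continuity of local manifolds on each block.
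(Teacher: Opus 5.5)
Your ingredients (Pesin blocks, Katok's closing lemma, the inclination lemma, and $X_H$ built from a fixed periodic orbit $\mathcal O\in H$) match what the paper points to; it gives no proof beyond citing \cite{BCS-MME}. Reflexivity, the existence of a periodic orbit related to $\mu$, and properties (1)--(2) are handled correctly once transitivity is known.

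\textbf{Transitivity is circular as written.} After ``replacing $\mu_2$ by a related periodic orbit $\mathcal O$'' you assert that $W^u(x)\pitchfork W^s(\mathcal O)\neq\emptyset$ for $\mu_1$-typical $x$, and that $\mathcal O\homrel\mu_3$. But you only know $\mu_1\homrel\mu_2\homrel\mathcal O$, so this is already transitivity through the non-periodic middle term $\mu_2$. The classical $\lambda$-lemma only treats a periodic middle term. The missing tool is an inclination lemma along returns to a Pesin block. Suppose $W^u(x)\pitchfork W^s(y)\neq\emptyset$ with $y$ $\mu_2$-typical, and $f^ny$ returns infinitely often to $\Lambda\cap B(y_0,\delta)$. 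Here $y_0$ is the point near which the closing lemma produced $p\in\mathcal O$ with $W^{s/u}_{\mathrm{loc}}(p)$ $C^1$-close to $W^{s/u}_{\mathrm{loc}}(y_0)$. Working in Pesin charts, $f^n(W^u(x))$ then contains discs $C^1$-close to $W^u_{\mathrm{loc}}(f^ny)$ at those return times, and these discs cross $W^s_{\mathrm{loc}}(p)$. This uses recurrence of $y$, which holds $\mu_2$-a.e.

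\textbf{Your $X_H$ fails (3) and is not even disjoint from $X_{H'}$.} This is the more serious problem. Take two horseshoes $K\ni p$ and $K'\ni p'$, with $p,p'$ fixed points having the same eigenvalues. Separate them by a filtration ($K$ in a trapping region $U$, $K'$ outside), so that $W^u(K)\cap W^s(K')=\emptyset$ and $\delta_p,\delta_{p'}$ are not homoclinically related. Add a transverse heteroclinic point $x\in W^u(p')\pitchfork W^s(p)$. The closure of the orbit of $x$ is a hyperbolic set, so $x$ is regular and lies in a Pesin block. Moreover $W^u(x)=W^u(p')$ crosses $W^s(p)$ at $x$ and crosses $W^s(p')$ at homoclinic points of $K'$. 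Likewise $W^s(x)=W^s(p)$ crosses both $W^u(p)$ and $W^u(p')$. So $x$ satisfies your three conditions for both $p$ and $p'$, and \eqref{eq-homrel} holds for the pair $(x,x)$ with $k=\ell=0$.

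Your sentence ``a point in both would relate $\mathcal O$ to $\mathcal O'$'' pushes $W^u(\mathcal O)$ through the orbit of $x$. That requires $W^u(\mathcal O)$ to accumulate on $W^u(x)$, which needs the orbit of $x$ to recur, and here it does not.

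The repair is to build two-sided recurrence into $X_H$. Require in addition that the orbit of $x$ visit $\Lambda\cap B$ infinitely often in both the past and the future, for some rational Pesin block $\Lambda$ and some ball $B$ below the local product scale of $\Lambda$.
\begin{itemize}
\item By Poincar\'e recurrence this has full measure for every hyperbolic ergodic measure, so (1) survives.
\item Apply the block inclination lemma along future returns, then transfer to past returns through the product structure in $\Lambda\cap B$. This shows $W^u(\mathcal O)$ accumulates in $C^1$ on compact pieces of $W^u(x)$, and $W^s(\mathcal O)$ on those of $W^s(x)$.
\item Consequently, $x\in X_H$ and $y\in X_{H'}$ satisfying \eqref{eq-homrel} give $\mathcal O\homrel\mathcal O'$, i.e.\ $H=H'$. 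This covers arbitrary pairs, not just the case $x=y$ you treated.
\end{itemize}
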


\begin{definition}
The \emph{period} of a Borel homoclinic class $X\subset M$ is:
 $
    \per(X) := \gcd(\{\#\mathcal O: \mathcal O$ a periodic orbit in $X\})$.
    Its \emph{entropy} is $h(X):=\hTOP(f|_X)$.
\end{definition}

It is convenient to set $\per(H):=\per(X_H)$ and $h(H):=h(X_h)$. The following is easy:

\begin{fact}
A homoclinic class of measures has zero entropy if and only if it  is reduced to a single periodic orbit.
\end{fact}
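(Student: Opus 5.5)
We prove the two implications separately; the direction \emph{single periodic orbit $\Rightarrow$ zero entropy} is immediate. If $H=\{\delta_{\mathcal O}\}$ for a hyperbolic periodic orbit $\mathcal O$, then by property (1) of the Borel homoclinic class every ergodic measure carried by $X_H$ belongs to $H$. So the only ergodic measure is $\delta_{\mathcal O}$, whose entropy vanishes since it has finite support. Hence $h(H)=\hTOP(f|_{X_H})=0$, using that the supremum over invariant measures may be restricted to ergodic ones.

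For the converse, we argue by contraposition: assume $H$ contains a measure other than a single periodic orbit and produce positive entropy. By the Lemma, $H$ contains a hyperbolic periodic orbit $\mathcal O$. The plan is to find a second element $\nu\in H$ with $\nu\neq\delta_{\mathcal O}$, and then build a horseshoe out of the homoclinic relation.

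\emph{Case 1: $H$ contains a second hyperbolic periodic orbit $\mathcal O'\ne\mathcal O$.} Since $\mathcal O\homrel\mathcal O'$, there are transverse intersections $W^u(\mathcal O)\pitchfork W^s(\mathcal O')$ and $W^u(\mathcal O')\pitchfork W^s(\mathcal O)$. The $\lambda$-lemma then gives a transverse homoclinic point of $\mathcal O$ which is not in $\mathcal O$.

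\emph{Case 2: $\nu\in H$ is not periodic.} Then $\nu$ is nonatomic, so for $\nu$-a.e.\ $x$ the orbit of $x$ is disjoint from $\mathcal O$. The relation \eqref{eq-homrel} between $\nu$ and $\delta_{\mathcal O}$ gives transverse intersections of $W^{u}(x)$ with $W^s(\mathcal O)$ and of $W^s(x)$ with $W^u(\mathcal O)$. Applying the $\lambda$-lemma at $\mathcal O$ (inclination lemma along the Pesin manifolds of $x$ inside a Pesin block) yields a transverse intersection $W^u(\mathcal O)\pitchfork W^s(\mathcal O)$ off $\mathcal O$. Alternatively, one can apply Katok's horseshoe construction to $\nu$ if $h(f,\nu)>0$; but if $h(f,\nu)=0$ this is not directly available, so we use the transverse homoclinic point route.

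In both cases $\mathcal O$ has a transverse homoclinic point $q\notin\mathcal O$. The Smale--Birkhoff homoclinic theorem then gives a uniformly hyperbolic horseshoe $\Lambda$ for some iterate $f^N$, conjugate to a full shift on two symbols. So $\htop(f|_\Lambda)\ge \frac{\log 2}{N}>0$.

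It remains to check that the periodic orbits of $\Lambda$, and hence its ergodic measures of positive entropy, lie in $H$. Every hyperbolic periodic orbit in the horseshoe is homoclinically related to $\mathcal O$, since in a transitive hyperbolic set stable and unstable manifolds intersect transversally. The same holds for ergodic measures on $\Lambda$, as their Pesin manifolds are the uniform local invariant manifolds of $\Lambda$. Hence these measures have full mass on $X_H$ by property (1), so $h(H)\ge\htop(f|_\Lambda)>0$.

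The main obstacle is Case 2, and specifically the claim that a non-periodic element of $H$ forces a transverse homoclinic point of $\mathcal O$ distinct from $\mathcal O$. To settle it, I would pick $x$ in a Pesin block, recurrent, and with orbit avoiding $\mathcal O$. The transverse point $z\in W^u(\mathcal O)\pitchfork W^s(x)$ is then not in $\mathcal O$. Iterating $W^u(\mathcal O)$ forward near $x$ accumulates, by the inclination lemma in Pesin charts, on $W^u(x)$. Since $W^u(x)$ meets $W^s(\mathcal O)$ transversally, this produces a transverse intersection $W^u(\mathcal O)\pitchfork W^s(\mathcal O)$ near a point which is not on $\mathcal O$.
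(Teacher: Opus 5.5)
Your proposal is correct. The paper gives no proof of this fact and only calls it easy; your argument is the standard one. Property (1) of the Borel homoclinic class gives the trivial direction. For the converse, a transverse homoclinic point of $\mathcal O$ yields a Smale--Birkhoff horseshoe through $\mathcal O$ whose ergodic measures are all homoclinically related to $\delta_{\mathcal O}$, so $h(H)>0$.

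The one step that needs more care than you give it is Case 2. The inclination lemma in Pesin charts makes $f^n(W^u(\mathcal O))$ $C^1$-close only to the \emph{local} manifolds $W^u_{\rm loc}(f^nx)$. The transverse point of $W^u(x)\cap W^s(\mathcal O)$, however, may lie far out on the global manifold. The clean fix is to run your recurrence argument at a point $x'=f^{-m}(x)$ of the Pesin block whose local unstable manifold already crosses $W^s(\mathcal O)$ transversally. Such points form a set of positive measure, and almost all of them return close to themselves inside the block. Almost all of them also lie off $W^u(\mathcal O)\cup W^s(\mathcal O)$, so the homoclinic point you produce is not on $\mathcal O$.

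Alternatively, you can bypass the Pesin inclination lemma altogether. For nonatomic $\nu$, Katok's closing lemma gives infinitely many distinct hyperbolic periodic orbits homoclinically related to $\nu$; this is the same tool that puts a periodic orbit in $H$ in the first place. That reduces Case 2 to Case 1.
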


The following easy proposition shows how the global SPR property brings the focus on a finite number of homoclinic classes.

\begin{proposition}[B-Crovisier-Sarig]\label{propGlobalSPR}
Let $f$ be a $C^{1+}$-smooth diffeomorphism of a compact manifold. If $f$ is SPR if and only if there is $h_0<\hTOP(f)$ such that:
 \begin{enumerate}
  \item  $f$ admits finitely many Borel homoclinic classes with top entropy equal to $\hTOP(f)$;
  \item each of these classes with top entropy equal to $\hTOP(f)$  is SPR;
  \item all ergodic measures not carried by the these classes have entropy less than $h_0$.
\end{enumerate} 
\end{proposition}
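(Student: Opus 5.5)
We prove the two implications separately. The main tool is the decomposition of high-entropy ergodic measures into Borel homoclinic classes given by the Lemma above. The key observation is that any ergodic measure charging a Pesin block $\Lambda(\chi_0,\epsilon,C_0)$ with mass $\ge m_0$ is hyperbolic. Moreover, by the continuity of Pesin manifolds on $\Lambda(\chi_0,\epsilon,C_0)$ (which is compact), there is $r>0$ such that any two points of the block at distance $<r$ have transversally intersecting local stable/unstable manifolds.

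\emph{SPR $\Rightarrow$ (1)--(3).} Fix $\epsilon$ small and let $h_0,C_0,m_0$ be given by \cref{def-diffeo-SPR}; let $r$ be as above for $\Lambda:=\Lambda(\chi_0,\epsilon,C_0)$. Cover $\Lambda$ by finitely many balls $B_1,\dots,B_N$ of radius $r/2$.

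Let $\mu$ be ergodic with $h(f,\mu)>h_0$. Then $\mu(\Lambda)\ge m_0>0$, so $\mu$ is hyperbolic and $\mu(\Lambda\cap B_j)>0$ for some $j$. If $\mu,\nu$ are two such measures charging the same $\Lambda\cap B_j$, then for $\mu$-typical $x$ and $\nu$-typical $y$ in $\Lambda\cap B_j$ we get \eqref{eq-homrel} with $k=\ell=0$. Since \eqref{eq-homrel} then holds almost everywhere by invariance and ergodicity, $\mu\homrel\nu$.

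Hence the measures of entropy $>h_0$ fall into at most $N$ homoclinic classes $H_1,\dots,H_n$. Discarding those classes $H_i$ with $h(H_i)<\hTOP(f)$, we may enlarge $h_0$ to exceed their entropies (there are finitely many). This gives (1) and (3). For (2), the SPR property of each class $X_{H_i}$ is inherited directly: $\hTOP(f|_{X_{H_i}})=\hTOP(f)$, and every ergodic measure on $X_{H_i}$ is ergodic for $f$, so the same constants work.

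\emph{(1)--(3) $\Rightarrow$ SPR.} Let $X_1,\dots,X_n$ be the top-entropy classes. Each is SPR with some exponent $\chi_i$; take $\chi_0=\min_i\chi_i$, and note that $\Lambda(\chi,\epsilon,C)\subset\Lambda(\chi_0,\epsilon,C)$ for $\chi\ge\chi_0$. Given $\epsilon$ (shrinking if needed so that $\epsilon<\chi_0$), take for each class constants $h_i<\hTOP(f)$ and $C_i,m_i$. Set $h'=\max(h_0,h_1,\dots,h_n)$, $C=\max_i C_i$ and $m=\min_i m_i$.

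Now let $\mu$ be ergodic with $h(f,\mu)>h'$. By (3), $\mu$ is carried by some class, and by the Lemma above $\mu(X_i)=1$ for some $i$. So $\mu\in\Proberg(f|_{X_i})$ with $h(f,\mu)>h_i$, which gives $\mu(\Lambda(\chi_0,\epsilon,C))\ge m$.

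The main obstacle is the forward direction: one has to make the ``same ball implies homoclinically related'' step rigorous. This requires the uniform size and continuity of Pesin manifolds on the compact block. Since the Lemma on Pesin blocks is stated only for $0<\epsilon\ll\chi$, we must restrict to small $\epsilon$ and to the $C^{1+}$ setting. We must also check that typical points give transversality at positive-measure sets, which then propagates to almost every pair.
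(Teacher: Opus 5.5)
Your proposal is correct and follows essentially the same route as the paper's sketch. Compactness of the Pesin block supplied by SPR, together with continuity (uniform size and transversality) of the local Pesin manifolds on it, lets you cover the block by finitely many small sets, each charged by at most one homoclinic class, which yields (1)--(3); the paper leaves the converse implication implicit, and your argument for it (taking $\chi_0=\min_i\chi_i$, $h'=\max(h_0,h_1,\dots,h_n)$, $C=\max_i C_i$, $m=\min_i m_i$ over the finitely many top-entropy classes) is the natural one and is fine.
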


We sketch the proof since it brings together Pesin blocks and the homoclinic relation.

\begin{proof}
Since $f$ is SPR, there exists a Pesin block $\Lambda$ and $h<\hTOP(f)$ such that every $\mu\in\Proberg(f)$ with $h(f,\mu)>h$ satisfies $\mu(\Lambda)>0$. However, on $\Lambda$ the stable and unstable manifolds are continuous, in particular in their directions and scales (the scale at which they are almost flat). Therefore there exists $\epsilon>0$ such that if $x,y\in\Lambda$ and $d(x,y)<\epsilon$ then $W^u(x)\pitchfork W^s(y)\ne\emptyset$ and $W^s(x)\pitchfork W^u(y)\ne\emptyset$. In particular, any subset of $\Lambda$ with diameter less than $\epsilon$ cannot meet more than one homoclinic class. The compactness of $\Lambda$ is enough to conclude.
\end{proof}

\subsection{SPR Markov shifts}\label{secSPRcoding}
We rely on the codings developed by Sarig \cite{Sarig-JAMS} for surface $C^{1+}$-diffeomorphisms, generalized in arbitrary dimension by Ben~Ovadia \cite{BenOvadia-JMD} and localized to Borel homoclinic classes in \cite{BCS-MME}.

\medbreak
\paragraph{Markov shifts.} 
The symbolic dynamics is given by Markov shifts, a generalization of the subshifts of finite type that code for uniformly hyperbolic diffeomorphisms (we refer to \cite{Gurevich-Savchenko,Sarig-ETDS-1999,Sarig-CMP-2001} for background).

\newcommand\toG{\stackrel{\mathcal G}\to}
\begin{definition}
A simple oriented graph $\mathcal G\subset\mathcal V\times\mathcal V$ on a countable set $\mathcal V$ defines the \emph{Markov shift} $(\Sigma(\mathcal G),\sigma)$ defined by: (1) the space $\Sigma(\mathcal G):=\{v\in\mathcal V^{\mathcal Z}:\forall n\in\mathbb Z\; (v_n,v_{n+1})\in\mathcal G\}$; (2) the shift map $\sigma:\Sigma(\mathcal G)\to\Sigma(\mathcal G)$, $(x_n)_{n\in\mathcal Z}\mapsto (x_{n+1})_{n\in\mathcal Z}$. We often use $\Sigma(\mathcal G)$ or just $\Sigma$ to represent the Markov shift $(\Sigma(\mathcal G),\sigma)$. The elements of $\mathcal V$ are called \emph{vertices} and one writes $a\toG b$ to mean that $(a,b)\in\mathcal G$.

\medbreak
$\Sigma$ is \emph{irreducible} if any two vertices $v,w\in\mathcal V$ can be joined by a nontrivial path, i.e.,   $(v_0,\dots,v_\ell)\in\mathcal V^{\ell+1}$ with $\ell\ge1$, $v_0=v\toG v_1\toG\dots\toG v_\ell=w$.
\end{definition}

We endow all Markov shifts with the distance $d(x,y):=\exp\left(-\inf\{|n|:x_n\ne y_n\}\right)$ on $\Sigma$ (it is compatible with the product topology). We need a few more definitions.

The \emph{regular part} of $\Sigma$ is:
 $$
    \Sigma^\# := \{ x\in\Sigma : \exists v,w\in\mathcal V\; \{k\le0:x_k=v\} \text{ and }
      \{\ell\ge0:x_\ell=w\} \text{ are both infinite}\}.
 $$
 
A Markov shift $\Sigma$ does not need to be compact so it may not have a well-defined topological entropy. Its top entropy $\hTOP(\sigma|_\Sigma)$ (or simply $\hTOP(\Sigma)$) is however well-defined. By a result of Gurevich, an irreducible Markov shift with finite top entropy has at most one measure maximizing the entropy. From now on, we  assume: $\hTOP(\Sigma)<\infty$. 

An irreducible Markov shift $\Sigma$ has a period $\per(\Sigma):=\gcd(\{n: \exists x\in\Sigma$ $\sigma^nx=x\})$. It is called \emph{aperiodic} if this period is $1$. 
The following property of Markov shifts was introduced by Gurevich  (under a slightly different name and using a different but equivalent characterization, see \cite{BCS-SPR}):

\begin{definition}
A Markov shift $\Sigma$ with finite top entropy is \emph{Strongly Positively Recurrent} (or SPR) if there is a finite subset $\mathcal V_*$ of the vertices and some numbers $h_*<\hTOP(\Sigma)$ and $\gamma_*>0$ such that
 $$
   \forall\mu\in\Proberg(\Sigma)\; h(\sigma,\mu)> h_* \implies \mu\left(\{v\in\Sigma:v_0\in\mathcal V_*\}\right) > \gamma_*.
 $$
\end{definition}

SPR  Markov shifts have been shown to have many strong properties and a large part of our strategy will consist in trying to push down (through $\pi$) those properties known for $\Sigma$ or for its one-sided counterpart $\Sigma^+$ to $f|X$. Let us quote two of the most important results on SPR Markov shifts from our point of view.

\medbreak

Thanks to Ornstein theory, many natural measures are Bernoulli in the following sense:

\begin{definition}\label{DefBernoulli}
A \emph{Bernoulli scheme} is the left-shift on $A^\mathbb Z$, where $A$ is a discrete probability space.  A measure $\mu\in\Prob(f)$ is \emph{Bernoulli up to a period} if $(f,\mu)$ is isomorphic to the product of a Bernoulli scheme and a cyclic permutation. The measure is \emph{Bernoulli} if the period of the cyclic permutation is $1$.
\end{definition}

\begin{theorem}[Gurevich \cite{Gurevich-Savchenko}]
If $\Sigma$ is an irreducible Markov shift with finite top entropy and the SPR property, then it admits exactly one measure maximizing the entropy $m$. Moreover, $(\sigma,m)$ is Bernoulli up to a period.
\end{theorem}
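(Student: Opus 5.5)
The plan is to follow Gurevich's classical route through the countable-state Perron--Frobenius theory of Vere-Jones. The SPR hypothesis on measures will be turned into a statement about loop counts, and the maximizing measure will then be built as a Markov measure from eigenvectors.

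\emph{Step 1: reduce to loop counts.} Fix a vertex $a$ in the finite set $\mathcal V_*$. Let $Z_n(a)$ be the number of loops of length $n$ at $a$, and let $f_n(a)$ be the number of first-return loops of length $n$ at $a$. Irreducibility gives that $\limsup_n \frac1n\log Z_n(a)$ is independent of $a$, and this is the Gurevich entropy. A standard approximation by finite subgraphs (finite irreducible SFTs, whose Parry measures have entropy close to the top entropy) identifies it with $\hTOP(\Sigma)$. Define the generating functions $F_a(z)=\sum_n f_n(a)z^n$ and $Z_a(z)=\sum_n Z_n(a)z^n = 1/(1-F_a(z))$, and set $R=e^{-\hTOP(\Sigma)}$.

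\emph{Step 2: show that the first-return series has strictly smaller exponential growth.} The aim is $\limsup_n\frac1n\log f_n(a)<\hTOP(\Sigma)$ for some $a\in\mathcal V_*$, which is the main obstacle. I would argue by contradiction. Suppose first-return loops at every $a\in\mathcal V_*$ have growth rate equal to $\hTOP(\Sigma)$; since $\mathcal V_*$ is finite, the same holds for loops avoiding $\mathcal V_*$ except at their endpoints. Use these many long excursions outside $\mathcal V_*$, concatenated freely through a vertex of $\mathcal V_*$, to build an invariant measure on a finite-type subshift (a loop system) whose entropy exceeds $h_*$. Choosing long excursions ensures that this measure gives mass less than $\gamma_*$ to the cylinder $\{v_0\in\mathcal V_*\}$. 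Ergodic decomposition then yields an ergodic measure with $h>h_*$ and $\mu([\mathcal V_*])\le\gamma_*$, contradicting SPR. The delicate points are the entropy bookkeeping for the concatenation (a variational formula for loop systems: entropy $\approx \frac{\log f_n}{n}$ while the mass on $\mathcal V_*$ is $\approx 1/n$) and the fact that long first-return loops may revisit other vertices of $\mathcal V_*$. I would handle the latter by first working with loops that avoid $\mathcal V_*$ entirely, using the finiteness of $\mathcal V_*$.

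\emph{Step 3: positive recurrence and construction of the measure.} Strict inequality gives $F_a$ a radius of convergence larger than $R$. Since $Z_a$ has radius exactly $R$, this forces $F_a(R)=1$ and $\sum_n n f_n(a)R^n<\infty$, i.e.\ $\Sigma$ is positive recurrent. Vere-Jones theory then provides positive left and right eigenvectors $\ell,r$ with $\ell A=R^{-1}\ell$, $Ar=R^{-1}r$ and $\sum_v \ell_v r_v<\infty$. The Markov measure with $p_{uv}=R\,r_v/r_u$ and stationary vector $\ell_v r_v$ has entropy $\hTOP(\Sigma)$, so an MME exists.

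\emph{Step 4: uniqueness and the Bernoulli property.} Uniqueness is Gurevich's result quoted above, since $\hTOP(\Sigma)<\infty$. For the Bernoulli property, the MME is an irreducible positive recurrent Markov chain of period $p=\per(\Sigma)$. Split $\mathcal V$ into $p$ cyclic classes; $\sigma^p$ restricted to one class is an aperiodic positive recurrent Markov chain, which is Bernoulli by Friedman--Ornstein. Hence $(\sigma,m)$ is isomorphic to this Bernoulli scheme times a cyclic permutation of order $p$.
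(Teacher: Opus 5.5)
Your route is the classical one behind the citation. The paper gives no proof of its own: it cites Gurevich and refers to \cite{BCS-SPR} for the equivalence between its measure-theoretic SPR and Gurevich's loop-counting SPR, and your Step~2 supplies that equivalence. Steps~1, 3 and 4 are standard modulo citations. The loop-system construction in Step~2 is sound. Let $e_n(c,c')$ count excursions of length $n$ from $c$ to $c'$ with interior outside $\mathcal V_*$. Closing them by a fixed path of length $\ell$ from $c'$ to $c$ gives an ergodic measure (block-Bernoulli, averaged over the $n+\ell$ phases). Its entropy is $\frac{1}{n+\ell}\log e_n(c,c')$ and its mass on $[\mathcal V_*]$ is at most $\frac{\ell+1}{n+\ell}$. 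So SPR forces every series $E_{cc'}(z)=\sum_n e_n(c,c')z^n$ to converge up to some common $R'>R$.

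The gap is the passage from this to first returns at a single vertex, which you justify only by ``since $\mathcal V_*$ is finite''. That is not enough. A first-return loop at $a$ may pass through $\mathcal V_*\setminus\{a\}$ an unbounded number of times, so $F_a$ is an infinite sum over chains of excursions. Termwise growth bounds on the $e_n(c,c')$ then give no bound on $f_n(a)$. For example, in the full shift on three symbols with $\mathcal V_*$ equal to all vertices, excursions are single edges, yet first returns to a vertex number $2^{n-1}$.

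The missing ingredient is Perron--Frobenius for the induced finite matrix $E(z)=(E_{cc'}(z))_{c,c'\in\mathcal V_*}$:
\begin{itemize}
\item For real $z\in(0,R')$, $E(z)$ is irreducible, by irreducibility of $\Sigma$. Its spectral radius $\lambda(z)$ is continuous and increasing, and $Z_a(z)=\sum_k (E(z)^k)_{aa}$ is finite iff $\lambda(z)<1$.
\item $Z_a$ has radius exactly $R$, hence $\lambda(R)=1$. This uses the full Gurevich variational principle; finite-subgraph approximation only gives loop growth $\le\hTOP(\Sigma)$.
\item The proper principal submatrix $E_{\neg a}$ on $\mathcal V_*\setminus\{a\}$ therefore has spectral radius $<1$ at $z=R$, and so also slightly beyond $R$.
\item Consequently $F_a=E_{aa}+E_{a,\neg a}(I-E_{\neg a})^{-1}E_{\neg a,a}$ converges past $R$, i.e.\ $\limsup_n\frac1n\log f_n(a)<\hTOP(\Sigma)$ for every $a\in\mathcal V_*$.
\end{itemize}
With this lemma inserted the argument closes.

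A minor point in Step~4: $(\sigma,m)$ is a height-$p$ tower over the Bernoulli system $\sigma^p$ on one cyclic class, which is a Markov chain on $p$-blocks. To exhibit it as a product with a cyclic permutation, you need a $p$-th root of that Bernoulli shift, which exists by Ornstein's theorem. It is not ``this Bernoulli scheme'' itself.
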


The following is a simplified version of Theorem B.7 in \cite{BCS-SPR} which we deduced from the spectral gap theorem of \cite{Cyr-Sarig}. To obtain a spectral gap on a space of functions, one has to pass to a non-invertible shift (which can be thought of as expanding rather than hyperbolic):
\medbreak

The \emph{one-sided counterpart} of $\Sigma$ is $\Sigma^+:=\{(x_n)_{n\ge0}:(x_n)_{n\in\mathcal Z}\in\Sigma\}$ together with $\sigma^+:(x_n)_{n\ge0}\mapsto(x_{n+1})_{n\ge0}$.
\medbreak

It will also be necessary to pass to the case of period $1$. These two reductions are well-known and the appropriate versions can be found in \cite{BCS-SPR}.

\newcommand\un[1]{\underline{#1}}
\begin{theorem}[Cyr-Sarig]
Let {$\Sigma^+$} be an irreducible one-sided SPR Markov shift
with finite Gurevich entropy and period $1$, let $\mu$ be the measure of maximum entropy.
For any $\beta>0$ there exist a linear operator $\mathcal L$ on a complex Banach space $(\mathcal L,\|\cdot\|_{\mathcal L})$ contained in $C(\Sigma^+)\cap L^1(\mu)$  such that:
\begin{enumerate}
\item $C^\beta(\Sigma^+)\subset \mathcal L$ with $\|\psi\|_{\mathcal L}\le \|\psi\|_\beta$ for all $\psi\in C^\beta(\Sigma^+)$;
\item $L$ is the Ruelle operator on $\mathcal L$: $(L\psi)(\un{x})=\sum_{\sigma(\un{y})=\un{x}}\psi(\un{y})$ for all $\psi\in\mathcal L$;
\item $L$ is transfer operator of $\mu$ on $\mathcal L$: $\sigma_*(\psi\cdot \mu)=L(\psi)\cdot\mu$ for all $\psi\in\mathcal L$;
\item $L$ has a spectral gap: $L=P+N$ with bounded operators on $\mathcal L$ such that $PN=NP=0$,  $P(\psi)=\int\psi\, d\mu\cdot 1$, and the spectral radius is $\rho(N)<1$.
\end{enumerate}
\end{theorem}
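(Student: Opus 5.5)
Since SPR on $\Sigma^+$ is phrased through a finite set of vertices $\mathcal V_*$, I would build $\mathcal L$ from an inducing scheme on the cylinder of one vertex. Fix $a\in\mathcal V_*$ (recurrent for the MME $\mu$) and let $[a]$ be its cylinder in $\Sigma^+$. Consider the first-return map $T=\sigma^{\tau}$ to $[a]$, where $\tau$ is the return time. $T$ is a full-branched Gibbs--Markov map: its branches are indexed by the countably many first-return loops at $a$. The plan proceeds in four steps.

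\textbf{Step 1: normalize the operator.} By Gurevich's theorem, $\mu$ is the Parry-type measure $\mu=h\,\nu$, where $L_0\nu=e^{\hTOP}\nu$ and $L_0h=e^{\hTOP}h$, with $L_0$ the raw counting operator. After conjugating by $h$ and dividing by $e^{\hTOP}$, $L$ becomes the transfer operator of $\mu$, which gives item 3. This step is routine once Gurevich's positive-recurrence data are in hand.

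\textbf{Step 2: relate $L$ to the induced operator via renewal.} Define operators $R_n\psi:=L^n(\psi 1_{[a]\cap\{\tau=n\}})1_{[a]}$ and $T_n:=1_{[a]}L^n(\cdot\,1_{[a]})$. The first-return decomposition gives the renewal equation
$$\textstyle\sum_n T_nz^n=\bigl(I-\sum_n R_nz^n\bigr)^{-1}.$$
The SPR hypothesis is equivalent (Gurevich--Savchenko, as recalled via \cite{BCS-SPR}) to the following: the loop generating function at $a$ has radius of convergence strictly larger than $e^{-\hTOP}$. Equivalently, after normalization, $\sum_n\|R_n\|_\beta e^{\delta n}<\infty$ for some $\delta>0$, i.e., exponential tails of $\tau$.

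\textbf{Step 3: spectral gap for the induced operator.} On $[a]$, $R(1)=\sum_n R_n$ is the transfer operator of the Gibbs--Markov map $T$. It therefore has a spectral gap on the $\beta$-H\"older space, by the standard Lasota--Yorke inequality using bounded distortion. Aperiodicity (period $1$) ensures that $1$ is the only eigenvalue of $R(z)$ on the unit circle with $z=e^{i\theta}$, for $\theta=0$ only. Gou\"ezel--Sarig renewal theory then shows that $T(z)$ extends meromorphically to a disc of radius $e^{\delta}$, with only a simple pole at $z=1$ whose residue is the projection onto $\int\cdot\,d\mu$.

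\textbf{Step 4: extend off $[a]$ to the whole space.} Define the Banach space $\mathcal L$ on all of $\Sigma^+$ by
$$\|\psi\|_{\mathcal L}:=\sup_n e^{\delta' n}\,\|1_{[a]}\text{-Hölder norm of }\psi\text{ on the level }\{\text{time since last visit to }a=n\}\|$$
for some $0<\delta'<\delta$. This is a tower (Young-type) weighted Hölder norm. With this choice, $C^\beta(\Sigma^+)$ embeds with norm at most $1$ (item 1), and functions in $\mathcal L$ are continuous and $\mu$-integrable thanks to the exponential tails. On this tower, $L$ acts as a shift up the levels plus the return operator at the base. The resolvent $(I-zL)^{-1}$ can then be written explicitly in terms of $T(z)$ and the level-translation pieces, which are holomorphic for $|z|<e^{\delta'}$. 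Hence $L=P+N$ with $\rho(N)\le e^{-\delta''}<1$.

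The main obstacle I expect is Step 4. The weights must be chosen so that three things hold simultaneously: arbitrary $C^\beta$ functions (including ones not decaying on high levels) lie in $\mathcal L$ with norm $\le\|\psi\|_\beta$, $L$ remains bounded, and the essential spectral radius estimate survives. I would first try a Cyr--Sarig style ``discriminant'' argument to pick $a$ and $\delta'$ so that the tail weights $e^{\delta' n}$ are compensated by the exponential decay of the mass of high levels. If that fails, I would fall back on Hölder norms weighted by $\mu$ of cylinders.
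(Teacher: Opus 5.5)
The paper does not prove this statement; it quotes it from Cyr--Sarig (through Theorem~B.7 of \cite{BCS-SPR}), so there is no internal argument to compare with. Steps~1--3 are fine in outline:
\begin{itemize}
\item normalizing is the right reading of items~2--3, since the literal unweighted sum does not preserve $\mu$;
\item SPR does give exponential tails for the first return to a vertex;
\item the renewal/Gibbs--Markov argument controls $1_{[a]}L^n(\cdot\,1_{[a]})$, though only on some disc of radius $>1$, not necessarily $e^{\delta}$.
\end{itemize}
But the content of the theorem is Step~4, and as written it fails.

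First, the weight has the wrong sign. With $\|\psi\|_{\mathcal L}=\sup_n e^{\delta' n}(\cdots)$, elements of $\mathcal L$ must decay exponentially up the levels. As soon as the return time is unbounded, the constant $1$ (the range of $P$) and generic $C^\beta$ functions then have infinite norm, so item~1 fails. You need $e^{-\delta' n}$, i.e.\ functions are allowed to \emph{grow} like $e^{\delta' n}$. Then $C^\beta\subset\mathcal L$ with norm $\le\|\psi\|_\beta$ is immediate, and the exponential tails only serve to make such functions $\mu$-integrable; this is why the statement asks for $L^1(\mu)$ rather than boundedness. Mass decay cannot ``compensate'' a weight inside a sup-type norm.

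Some growth is forced. Take the renewal shift on $a,c_1,c_2,\dots$ with edges $a\to a$, $a\to c_1$, $c_j\to c_{j+1}$, $c_j\to a$; it is aperiodic and SPR, with $\mu[c_1]=1/4$. The normalized operator satisfies $L^n1_{[c_1]}=1$ on $[c_{n+1}]$. Hence $\|L^n1_{[c_1]}-P1_{[c_1]}\|_\infty\ge 3/4$ for all $n$, and no norm dominating $\|\cdot\|_\infty$ can carry a gap.

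Second, and more seriously, the levels $\{\text{time since last visit to }a=n\}$ do not exist on $\Sigma^+$. A point $(x_0,x_1,\dots)$ has no past, and a vertex is in general reachable from $a$ by $a$-avoiding paths of several lengths. The picture ``$L$ shifts up the levels and returns at the base'' holds on the Young tower $\Delta=\{(y,\ell):y\in[a],\ 0\le\ell<\tau(y)\}$. $\Sigma^+$ is only a non-injective factor of $\Delta$, through $\pi(y,\ell)=\sigma^\ell y$. The only intrinsic level function on $\Sigma^+$ is the forward hitting time of $[a]$, and it goes the wrong way: $L$ moves functions down it and spreads $[a]$ over all levels. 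It does not help either: in the example above, every point of $[c_{n+1}]$ with $x_1=a$ has hitting time $1$.

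The missing idea is to prove the gap on $\Delta$ (Young's space with weights $e^{-\epsilon\ell}$, using exponential tails and aperiodicity) and then push it down by
\[
\Pi\hat\psi:=\sum_{\ell\ge0}L^\ell\bigl(\hat\psi(\cdot,\ell)\,1_{[a]\cap\{\tau>\ell\}}\bigr).
\]
This map satisfies $\Pi\hat L=L\Pi$, $\Pi(\psi\circ\pi)=\psi$ (in particular $\Pi1=1$), and $\int\Pi\hat\psi\,d\mu=\int\hat\psi\,d\hat\mu$. Set $\mathcal L:=\Pi(\text{tower space})$ with the quotient norm. Then $\ker\Pi$ is $\hat L$-invariant and annihilated by $\hat P$, so $\|L^n-P\|_{\mathcal L}\le\|\hat L^n-\hat P\|$ decays exponentially. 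You must still check, using SPR and the exponential tails, that $\Pi$ lands in $C(\Sigma^+)\cap L^1(\mu)$. Without a device of this kind, your Step~4 does not define a Banach space on $\Sigma^+$ at all, so the spectral gap for $L$ itself is not obtained.
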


\subsection{Coding of homoclinic classes.}\label{secSPRCoding}
Building on  previous works \cite{Sarig-JAMS,BenOvadia-JMD,BCS-MME}, we showed in \cite{BCS-SPR} that codings of the type constructed by Sarig and Ben~Ovadia lift Pesin blocks to finite union of cylinders. We deduce from this that codings of SPR Borel homoclinic classes are SPR Markov shifts. We obtain:

\begin{theorem}[B-Crovisier-Sarig \cite{BCS-SPR}]
Let $f$ be a $C^{1+}$-diffeomorphism of a compact manifold $M$. Let $X$ be a Borel homoclinic class. For each parameter $\chi>0$,  there exists an irreducible Markov shift $\Sigma_\chi$ and a H\"older map $\pi_\chi:X\to M$ such that:
 \begin{enumerate}
  \item $\pi_\chi\circ\sigma = f\circ\pi_\chi$ on $\Sigma_\chi$;
  \item for all $x\in M$, $\pi_\chi^{-1}(x)\cap\Sigma_\chi^\#$ is finite;
  \item for any $\mu\in\Prob(f|_X)$ which is $\chi$-hyperbolic, $\mu(\pi_\chi(\Sigma_\chi^\#))=1$;
  \item for any $\nu\in\Prob(\Sigma_\chi)$, $h(f,(\pi_\chi)_*(\nu))=h(\sigma,\nu)$ and $\mu(X)=1$;
  \item if $f$ is SPR on the Borel homoclinic class $X$, then $\Sigma_\chi$ is SPR.
 \end{enumerate}
\end{theorem}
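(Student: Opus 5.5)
The plan is to take the Sarig/Ben~Ovadia coding, localized to the Borel homoclinic class $X$ as in \cite{BCS-MME}, as the starting point. Items (1)--(4) are then essentially the output of that construction, and the new work is item (5).

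\textbf{Construction and items (1)--(4).} Fix $\chi>0$ and build a countable, locally finite family of Pesin charts (``double charts'') adapted to $\chi$-hyperbolic points. Declare an edge $a\toG b$ when the charts $\epsilon$-overlap in the Sarig sense. Shadowing by pseudo-orbits of double charts gives a Hölder map $\pi_\chi$ on the full shift over these charts that satisfies $\pi_\chi\circ\sigma=f\circ\pi_\chi$. Sarig's Bowen--Sinai refinement then replaces this by a Markov partition coding, which is finite-to-one on $\Sigma^\#_\chi$ (item (2)). Item (3) holds because $\chi$-hyperbolic measures give full measure to points with recurrent chart sequences.

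Next restrict to the irreducible component whose projection lies in $X$. Using the homoclinic relation and the lemma on $X_H$, two vertices whose images meet $X$ are connected in both directions by transverse intersections. The shadowing lemma converts these intersections into paths in $\mathcal G$, so the restricted graph is irreducible and its measures project into $X$. For item (4), lifting and projecting preserve entropy, by the Abramov--Rokhlin formula applied to a finite-to-one factor.

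\textbf{Item (5), SPR transfers.} Assume $f$ is SPR on $X$ with parameters $\chi_0$, and take the coding for some $\chi\le\chi_0$ (possibly shrinking $\chi_0$). Given $\epsilon$ small, the SPR hypothesis provides a Pesin block $\Lambda=\Lambda(\chi_0,\epsilon,C_0)$, a threshold $h_0<\hTOP(f|_X)$ and a mass $m_0>0$. The key claim is the following.

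\emph{Claim.} The set $\pi_\chi^{-1}(\Lambda)\cap\Sigma^\#$ is contained in a finite union of cylinders $\{v_0\in\mathcal V_*\}$.

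The reason is that at points of $\Lambda$ the Pesin charts have size bounded below by a function of $C_0$ and $\epsilon$. Sarig's construction uses only charts with size parameters in a discrete set $\{e^{-k\epsilon/3}\}$ and centers in a finite net at each scale. Hence only finitely many vertices can code points of $\Lambda$.

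With the claim in hand, let $\nu\in\Proberg(\Sigma_\chi)$ with $h(\sigma,\nu)>h_0$. Then $\mu=(\pi_\chi)_*\nu$ is ergodic, carried by $X$, and has the same entropy. Since $\hTOP(\Sigma_\chi)=\hTOP(f|_X)$ by items (3)--(4), it follows that $\mu(\Lambda)\ge m_0$. Therefore $\nu(\{v_0\in\mathcal V_*\})\ge\nu(\pi_\chi^{-1}\Lambda)\ge m_0$, which is the SPR property of $\Sigma_\chi$ with $h_*=h_0$ and $\gamma_*=m_0$.

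\textbf{Main obstacle.} The hard step is the claim, i.e.\ the local finiteness of the coding over Pesin blocks. Membership in $\Lambda$ controls the size of the \emph{optimal} chart at $x$. The vertex coding $x$, however, comes from an arbitrary chain of $\epsilon$-overlapping charts, and it could have a much smaller size parameter. The first thing I would try is to exploit the ``maximality'' built into Sarig's admissible chart sequences. Sarig's improvement lemma says that the size parameters $q$ along an admissible sequence must be comparable, up to a factor $e^{\pm\epsilon}$, to the optimal $q_\epsilon(f^n x)$ at recurrent times. Combined with the uniform bound on $\Lambda$, this forces the size parameter of the vertex to be bounded below, and finiteness follows.

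A second, smaller point is ensuring $\chi\le\chi_0$ while keeping item (3) valid for all $\chi$-hyperbolic measures. This is harmless, since the statement is needed only for small $\chi$.
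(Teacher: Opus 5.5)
Your proposal follows the paper's route. Items (1)--(4) are taken from the Sarig--Ben~Ovadia coding localized to $X$ as in \cite{BCS-MME}, and item (5) comes, exactly as in the paper, from the fact that $\pi_\chi^{-1}(\Lambda)\cap\Sigma_\chi^\#$ is covered by finitely many cylinders; you rightly reduce this to Sarig's inverse theorem (the size parameters of a regular chain are comparable to $q_\epsilon$ of the coded point) together with discreteness of the chart alphabet.

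When writing it out, make three choices explicit. Take $\chi<\chi_0$ strictly. Take the block parameter $\epsilon$ small relative to the coding's $\epsilon$, so that $q_\epsilon$ is bounded below on $\Lambda$. Use local finiteness of the Bowen--Sinai refinement to pass from finitely many double charts to finitely many rectangles, since the rectangles are the actual vertices of $\Sigma_\chi$.
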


\begin{remark}
One can easily see  that either there is some $\chi_0>0$ such that all measures in the class $X$ are $\chi_0$-hyperbolic or the above does depend on $\chi$. In particular,  the H\"older exponent of $\pi:\Sigma\to M$ must tend to $0$ when $\chi$ does. From now on, we omit the $\chi$ from the notations $\Sigma_\chi,\pi_\chi$.
\end{remark}

The coding allow the following very general local result about MMEs:

\begin{corollary}[B-Crovisier-Sarig \cite{BCS-MME}]\label{CorLocalMME}
Let $f$ be a $C^{1+}$-diffeomorphism of a compact manifold $M$. Let $X$ be a Borel homoclinic class. 

Then $f|_X$ has at most one MME. If it exists it is Bernoulli up to a period which with the period $\per(X)$.

Moreover, if $X$ is SPR, then $\MME(f|_X)\ne\emptyset$.
\end{corollary}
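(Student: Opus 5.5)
We prove \cref{CorLocalMME} by transferring Gurevich's results on Markov shifts through the coding $\pi:\Sigma\to M$ of the Borel homoclinic class $X$. First we choose the hyperbolicity parameter. Suppose $\hTOP(f|_X)>0$; otherwise the class is a single periodic orbit and everything is trivial. Any MME $\mu$ of $f|_X$ lies in the class $H$ associated with $X$, so it is ergodic and hyperbolic. Take $\chi>0$ smaller than the minimal absolute value of its exponents. If there are two candidate MMEs $\mu_1,\mu_2$, take $\chi$ smaller than all exponents of both. By item~3 of the coding theorem, $\mu_i(\pi(\Sigma^\#))=1$.

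Next we lift each $\mu_i$ to the shift. Since fibers $\pi^{-1}(x)\cap\Sigma^\#$ are finite (item~2), there is a $\sigma$-invariant probability $\hat\mu_i$ on $\Sigma^\#$ with $\pi_*\hat\mu_i=\mu_i$. To build it, average uniformly over fibers, or take a measurable selection and a weak limit along Cesàro averages. Using finite-to-one-ness we then pass to an ergodic component that still projects to $\mu_i$, since $\mu_i$ is ergodic. By item~4, $h(\sigma,\hat\mu_i)=h(f,\mu_i)=\hTOP(f|_X)$. Conversely, every $\nu\in\Prob(\Sigma)$ projects to a measure on $X$ with the same entropy, so $\hTOP(\Sigma)=\hTOP(f|_X)<\infty$. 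Hence each $\hat\mu_i$ is an MME of $\Sigma$. Because $\Sigma$ is irreducible with finite top entropy, Gurevich's uniqueness gives $\hat\mu_1=\hat\mu_2$, and projecting gives $\mu_1=\mu_2$.

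For the Bernoulli property, note that if an MME exists, its lift $\hat\mu$ is the MME of the irreducible shift. Gurevich's theorem then says $\hat\mu$ is Bernoulli up to a period $p=\per(\Sigma)$. Since $\pi$ is finite-to-one on $\Sigma^\#$ and $\hat\mu(\Sigma^\#)=1$, $\pi$ is a finite-to-one factor map preserving entropy. The aim is to show it is an isomorphism $\hat\mu$-a.e. The plan is to argue that the number of preimages is an invariant function, hence a.e.\ constant $N$. Then $N$ distinct lifts would give distinct MMEs of $\Sigma$, contradicting uniqueness unless $N=1$. Making this precise needs that each of the $N$ preimage branches can be selected invariantly, by ordering them using the countable alphabet, which yields other invariant lifts.

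Finally we identify the period, and this step is the main obstacle. Periodic points of $\Sigma$ project to periodic orbits in $X$ of the same or a dividing length, so $\per(X)$ divides periods related to $\per(\Sigma)$. The reverse requires that the coding, which is localized to $X$, captures every hyperbolic periodic orbit homoclinically related to the class; item~3 applied to $\delta_{\mathcal O}$ with $\chi$ small does this. Even then, a loop in $\Sigma$ covering $\mathcal O$ may have length a multiple of $\#\mathcal O$. Instead, I would read the period directly as the cyclic factor of $(f,\mu)$, that is, the largest $p$ with $f^p$-non-ergodic components cycling. I would then show that this equals $\gcd\#\mathcal O$ using Katok horseshoes inside the class: periodic orbits in a horseshoe through a Pesin block of $\mu$ have lengths whose gcd detects the cyclic decomposition.

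For the last claim, if $X$ is SPR then item~5 makes $\Sigma$ SPR. By Gurevich it has an MME $m$, and $\pi_*m$ is an $f$-invariant measure carried by $X$ with entropy $\hTOP(\Sigma)=\hTOP(f|_X)$. It is ergodic as a factor of an ergodic measure, so $\MME(f|_X)\ne\emptyset$.
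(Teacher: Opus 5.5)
Your uniqueness argument (lift both candidates to the irreducible shift $\Sigma_\chi$ and apply Gurevich) is the intended one. The Bernoulli step, however, relies on a mechanism that fails. Uniqueness of the MME of $\Sigma$ cannot force the a.e.\ constant fibre cardinality $N$ to equal $1$. Every invariant lift of $\mu$ to $\Sigma^\#$ has maximal entropy, so uniqueness says that $\hat\mu$ is the \emph{uniform} average over the fibres $\pi^{-1}(x)\cap\Sigma^\#$. Ergodicity of $\hat\mu$ then says exactly that $\sigma$ permutes the $N$ branches ergodically, so no $\sigma$-equivariant selection of a branch exists when $N>1$. An ordering read off the symbols is not shift-equivariant. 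For a concrete case, take vertices $\{0,1\}\times\mathbb Z/2$ with edges $(a,i)\to(b,i+a)$. This is an irreducible aperiodic Markov shift that maps exactly $2$-to-$1$ onto the full $2$-shift by forgetting the second coordinate. That map is continuous, equivariant, finite-to-one and entropy preserving. Its unique MME is the uniform lift of the $(\frac12,\frac12)$-Bernoulli measure, and the map is nowhere injective. The missing ingredient is Ornstein theory: you do not need $\pi$ to be an isomorphism, because a factor of a system that is Bernoulli up to period $p$ is Bernoulli up to some period $q\mid p$ (factors of Bernoulli shifts are Bernoulli). Also, without SPR the Bernoulli property of $\hat\mu$ is not the quoted Gurevich theorem. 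It comes from the fact that an MME of an irreducible Markov shift is a positive recurrent Markov measure.

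The period identification is not carried out either. Knowing $q\mid\per(\Sigma)$ and $\per(X)\mid\per(\Sigma)$ does not compare the period $q$ of $(f,\mu)$ with $\per(X)$. The relation $\per(X)\mid q$ can be obtained along your lines. The map $f^q$ is mixing for an $f^q$-ergodic component $\mu_0$ of $\mu$. So Katok's closing lemma in a Pesin block of positive $\mu_0$-measure gives, for every large $k$, periodic orbits homoclinically related to $\mu$ (hence in $X$) whose lengths divide $qk$ and $q(k+1)$, respectively. The converse, $q\mid\#\mathcal O$ for every periodic orbit $\mathcal O\subset X$, needs an idea your sketch lacks; here is one way:
\begin{enumerate}
\item The components $\mu_0,\dots,\mu_{q-1}=f^{q-1}_*\mu_0$ are MMEs for $f^q$ on their $f^q$-homoclinic classes. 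By the uniqueness you proved, applied to $f^q$, they lie in pairwise distinct $f^q$-classes.
\item The inclination lemma shows that an $f^q$-orbit $\mathcal O_0\subset\mathcal O$ is $f^q$-homoclinically related to some $\mu_i$.
\item Set $g=\gcd(\#\mathcal O,q)$. Then $f^g$ preserves $\mathcal O_0$ and maps $\mu_i$ to $\mu_{i+g}$, so $\mu_i=\mu_{i+g}$, that is, $q\mid g$.
\end{enumerate}
Finally, in the existence step you reuse $\hTOP(\Sigma)=\hTOP(f|_X)$, which you derived by lifting an MME that is not available there. Instead take $\chi<\chi_0$ from the SPR definition. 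Then every ergodic measure with entropy $>h_0$ charges $\Lambda(\chi_0,\epsilon,C_0)$, hence is $\chi$-hyperbolic and lifts to $\Sigma_\chi$ by item~3.
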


\section{Consequences of SPR.}

\medbreak
\subsection{Global and qualitative SPR.}
We start with the following basic qualitative result.

\begin{theorem}[B-Crovisier-Sarig]\label{thmSPRfinBernoulli}
Let $f\in\Diff^{1+}(M)$ for some compact manifold $M$. If $f$ is SPR, then:
 \begin{enumerate}
  \item $\MME(f)$ is finite and nonempty and in bijection with the set of Borel homoclinic classes with top entropy equal to $\htop(f)$;
  \item each $m\in\MME(f)$ is Bernoulli up to a period and carried by the corresponding Borel homoclinic class which is SPR.
\end{enumerate} 
\end{theorem}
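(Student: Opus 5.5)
The plan is to reduce to the local statements already available: \cref{propGlobalSPR} reduces the global SPR property to finitely many SPR Borel homoclinic classes, and \cref{CorLocalMME} gives uniqueness, existence and the Bernoulli property inside each class. Concretely, apply \cref{propGlobalSPR}. It yields $h_0<\hTOP(f)$ and finitely many Borel homoclinic classes $X_1,\dots,X_N$ with $h(X_i)=\hTOP(f)$, each SPR. It also tells us that every ergodic measure not carried by $X_1\cup\dots\cup X_N$ has entropy $<h_0<\hTOP(f)$.

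First, every $m\in\MME(f)$ is ergodic with $h(f,m)=\hTOP(f)>h_0$. So $m$ is carried by some $X_i$, and only one since Borel homoclinic classes are pairwise disjoint. Thus $m\in\MME(f|_{X_i})$, which is the set of ergodic measures of $f|_{X_i}$ with entropy $\hTOP(f|_{X_i})=\hTOP(f)$. By \cref{CorLocalMME}, $f|_{X_i}$ has at most one MME, so $m\mapsto X_i$ is injective from $\MME(f)$ to the set of top-entropy classes.

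For surjectivity, note that each $X_i$ is SPR, so the last assertion of \cref{CorLocalMME} gives $\MME(f|_{X_i})\neq\emptyset$. Its element $m_i$ is ergodic, with $h(f,m_i)=h(X_i)=\hTOP(f)$, hence $m_i\in\MME(f)$. This proves item 1: $\MME(f)$ is finite, and it is nonempty as long as $N\ge1$. We get $N\ge1$ because SPR in \cref{def-diffeo-SPR} gives ergodic measures of entropy $>h_0$ (measures exist with entropy arbitrarily close to $\hTOP(f)$ by definition of the supremum), and these are carried by some $X_i$.

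Item 2 then follows from \cref{CorLocalMME}: $m_i$ is Bernoulli up to a period equal to $\per(X_i)$, and $X_i$ is SPR by \cref{propGlobalSPR}(2). The main obstacle is hidden in the existence part of \cref{CorLocalMME}. It needs the coding $\pi_\chi:\Sigma_\chi\to X_i$ with $\chi$ fixed at $\chi_0$ from \cref{def-diffeo-SPR} (or below $\hTOP$ on surfaces), so that all high-entropy measures of $X_i$ lift. It then needs the facts that $\Sigma_\chi$ is SPR and irreducible with $\hTOP(\Sigma_\chi)=h(X_i)$, and Gurevich's theorem to get an MME on $\Sigma_\chi$ whose push-forward is the desired measure. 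Checking $\hTOP(\Sigma_\chi)=\hTOP(f|_{X_i})$ is where SPR is really used: without it, high-entropy measures could fail to be $\chi$-hyperbolic for a fixed $\chi$ and escape the coding. I would take this as given from the previous section.
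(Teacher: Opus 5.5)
Your proof is correct. It follows the route the paper intends: the theorem is stated without a separate proof, but it is obtained by combining \cref{propGlobalSPR} (finitely many top-entropy classes, each SPR, with all other ergodic measures of entropy below $h_0$) with \cref{CorLocalMME} (at most one MME per class, Bernoulli up to the period, and existence when the class is SPR); this is what the following remark means by avoiding the topological argument of \cite{BCS-MME}. Your handling of injectivity via disjointness of the classes, of surjectivity via $h(X_i)=\hTOP(f)$, and of nonemptiness via ergodic measures of entropy $>h_0$ is sound.
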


\begin{remark}
In the case of a $C^\infty$-diffeomorphism of a compact surface, this gives a new proof of the finiteness of $\MME(f)$ obtained in \cite{BCS-MME}. This new proof avoids the topological argument of \cite{BCS-MME}. However establishing SPR in \cite{BCS-ECLE} is more complicated than the original proof.
\end{remark}

This shows that some surface diffeomorphisms  cannot be SPR. Indeed, we built in \cite{Buzzi-NoMax} surface diffeomorphisms without measures of maximal entropy which are $C^r$ for any finite $r$. Hence:
 
\begin{corollary}\label{coroNoMaxNoSPR}
For every $1<r<\infty$, there is a $C^r$ diffeomorphism of a compact surface with positive entropy which is not SPR.
\end{corollary}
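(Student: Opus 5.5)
The corollary follows from \cref{thmSPRfinBernoulli} by contraposition, combined with the construction of \cite{Buzzi-NoMax}. Fix $1<r<\infty$. From \cite{Buzzi-NoMax}, take a $C^r$ diffeomorphism $f$ of a compact surface $M$ with $\htop(f)>0$ and $\MME(f)=\emptyset$, meaning that no ergodic invariant measure has entropy equal to $\htop(f)=\hTOP(f)$; the two entropies coincide by the variational principle.

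First, the regularity hypothesis of \cref{thmSPRfinBernoulli} must hold. Since $r>1$, we can write $r=k+\alpha$ with $k\ge1$, and in every case $f\in\Diff^{1+\alpha'}(M)$ for some $\alpha'\in(0,1)$. If $r\ge2$, then $f$ is $C^2$, hence $C^{1+\alpha'}$ for every $\alpha'<1$. If $1<r<2$, then $f$ is $C^{1+(r-1)}$ directly. So $f\in\Diff^{1+}(M)$.

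Now suppose, for contradiction, that $f$ is SPR. Item (1) of \cref{thmSPRfinBernoulli} then states that $\MME(f)$ is finite and \emph{nonempty}. This contradicts $\MME(f)=\emptyset$, so $f$ is not SPR, while $\htop(f)>0$. That is exactly the statement.

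The only real obstacle is making sure the construction of \cite{Buzzi-NoMax} gives exactly what is needed for every finite $r>1$. It must yield a genuine diffeomorphism, not just a $C^r$ map, on a compact surface, with positive topological entropy and no MME. I would check in \cite{Buzzi-NoMax} that, for each finite $r$, the example is a $C^r$ surface diffeomorphism whose entropy is approached by, but never attained on, a sequence of horseshoes at smaller and smaller scales. If the paper only states the result for integer $r$, non-integer $r$ reduces to $\lceil r\rceil$, since a $C^{\lceil r\rceil}$ diffeomorphism is also $C^r$.
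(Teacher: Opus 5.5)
Your proposal is correct and matches the paper's argument: the paper deduces the corollary by combining item (1) of \cref{thmSPRfinBernoulli}, which says SPR forces $\MME(f)\neq\emptyset$, with the $C^r$ surface diffeomorphisms without MME built in \cite{Buzzi-NoMax}. Your check that $C^r$ regularity with $r>1$ gives the $C^{1+}$ hypothesis is a correct detail that the paper leaves implicit.
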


\begin{remark}\label{propBadHypMME}
By Ruelle inequality (\ref{thmRuelleInequ}), such diffeomorphisms  have all their ergodic measures with positive entropy  Pesin-hyperbolic, without being SPR.
\end{remark}

\subsection{Quantitative consequences of SPR}
We give three statement: exponential mixing, large deviation principle, and almost sure invariance principle (i.e., approximation by Brownian motion). We stress that the results are as good as those obtained for uniformly hyperbolic dynamics. 
We note that the statements below apply directly to diffeomorphisms of compact manifold with arbitrary dimension.

\medbreak
\paragraph{Exponential mixing.}

\medbreak

Given an SPR diffeomorphism of some compact manifold, if some measure maximizing the entropy has some mixing property, then it is \emph{exponentially mixing}:

\begin{theorem}[B-Crovisier-Sarig \cite{BCS-SPR}]
Let $f\in\Diff^{1+}(M)$ which is SPR on a Borel homoclinic class $X$. Let $m\in\MME(f|_X)$. If $m$ is totally ergodic, then, for all $\alpha>0$, there is $\kappa<1$ such that
 $$
   \forall u,v\in C^\alpha(M)\; \forall n\ge0\quad 
       \int_M u\circ f^n\cdot v\, dm - \int _M u\, dm \cdot \int _M v\, dm = \mathcal O(\kappa^n).
 $$
\end{theorem}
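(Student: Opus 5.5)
The plan is to transfer the problem to symbolic dynamics and push down the spectral gap of the Cyr--Sarig theorem. Since $f$ is SPR on $X$, the coding theorem of \cref{secSPRCoding} gives, for a suitable $\chi$, an irreducible SPR Markov shift $\Sigma$ and a H\"older map $\pi:\Sigma\to M$ with $\pi\circ\sigma=f\circ\pi$. Because every measure in $X$ has the same unstable dimension and the MME has positive entropy, we may take $\chi$ so that $m$ is $\chi$-hyperbolic. Then $m(\pi(\Sigma^\#))=1$, and the fibres of $\pi$ on $\Sigma^\#$ are finite. By Gurevich's theorem, $\Sigma$ has a unique MME $\hat m$. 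Since $\pi$ preserves entropy and $f|_X$ has at most one MME (\cref{CorLocalMME}), we get $\pi_*\hat m=m$.

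Next we reduce to the aperiodic case. The measure $\hat m$ is Bernoulli up to the period $p=\per(\Sigma)$. Because $\pi$ is finite-to-one on a full-measure set, $(\sigma,\hat m)$ is a finite extension of $(f,m)$. Total ergodicity of $m$ alone does not force $p=1$, since the cyclic factor of $\Sigma$ need not descend. So we work with $\sigma^p$ on one cyclic piece $\Sigma_0$, which is irreducible, aperiodic and SPR. We rewrite
$$\int u\circ f^n\, v\, dm=\int (u\circ\pi)\circ\sigma^n\,(v\circ\pi)\, d\hat m$$
and decompose over the $p$ cyclic pieces, writing $n=qp+r$. It then suffices to prove exponential decay of correlations for $\sigma^p|_{\Sigma_0}$ and H\"older observables. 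The cyclic averages must then be shown to recombine to $\int u\,dm\int v\,dm$. This step is where total ergodicity of $m$ enters: it forces the limiting correlations along each residue $r$ mod $p$ to agree with the product of the integrals. The functions $u\circ\pi$ and $v\circ\pi$ are $\beta$-H\"older on $\Sigma$ for some $\beta>0$ depending on $\alpha$ and the H\"older exponent of $\pi$.

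Then we pass to the one-sided shift. This is the main obstacle, since $u\circ\pi$ depends on the past as well as the future. We use the standard Sinai trick: a $\beta$-H\"older function $\psi$ on $\Sigma$ is cohomologous, via a bounded $\beta/2$-H\"older transfer function, to a function depending only on nonnegative coordinates. Alternatively, we approximate $\psi\circ\sigma^{k}$ by functions of coordinates $\ge -k$ up to error $e^{-\beta k}$, and choose $k\approx n/2$. After this step both observables live on $\Sigma_0^+$, and the correlation becomes
$$\int \tilde u\circ(\sigma^+)^{n-2k}\cdot \tilde v\, d\hat m^+.$$
Here $\tilde v$ is a transported, still H\"older function with controlled norm. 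Duality with the transfer operator converts this integral to $\int \tilde u\, L^{n-2k}\tilde v\, d\hat m^+$. The Cyr--Sarig spectral gap $L=P+N$ with $\rho(N)<1$ gives
$$L^{j}\tilde v=\int\tilde v+N^j\tilde v,\qquad \|N^j\tilde v\|_{\mathcal L}=\mathcal O(\rho'^j).$$
To finish, we need the pairing of $\tilde u\in L^\infty$ with $\mathcal L\subset L^1(\hat m)$ to be bounded. With this, combining the approximation error $e^{-\beta k}$ and the spectral decay $\rho'^{n-2k}$ yields the rate $\kappa^n$.
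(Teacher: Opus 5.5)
Your proposal follows essentially the paper's route: code the SPR class by an irreducible SPR Markov shift, identify $m$ with the projection of that shift's unique MME, reduce to period one and to the one-sided shift, and conclude with the Cyr--Sarig spectral gap. Your use of total ergodicity is correct: the $f^p$-ergodic measures $\pi_*(p\,\hat m|_{\Sigma_i})$ average to the $f^p$-ergodic measure $m$, so by extremality they all equal $m$.

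Two small fixes are needed. First, take $k$ to be a fixed fraction of $n$ strictly below $n/2$ (e.g.\ $k\approx n/4$); with $k\approx n/2$ the factor $(\rho')^{n-2k}$ no longer decays. Second, rely on the approximation argument rather than on Sinai's cohomology trick, because coboundary terms do not drop out of correlation integrals.
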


\begin{remark}
The above statement has a natural generalization in the case where $\per(X)>1$, see \cite{BCS-SPR}. This is the case for all statements below where we will assume that the period is $1$ for simplicity.
\end{remark}

\medbreak
\paragraph{Exponential tail for the MME.} 
Using a that the MME of an irreducible SPR Markov shift is ``exponentially filling'', we prove:

\begin{theorem}\label{thmSPRfromTail}
Let $f$ be a $C^{1+}$-diffeomorphism and $X$ be a Borel homoclinic class.  This class is SPR if and only if there are $\mu\in\Proberg(f|_X)$ and $\chi,\delta>0$ s.t.:
\begin{enumerate}
\item The measure $\mu$ is an MME for $f|_X$. \item For every $0<\epsilon<\chi$, there is $0<\theta<1$ such that, writing $\Lambda:=\Lambda(\chi,\epsilon,C)$,
    $
    \mu[\tau_\Lambda>n]=O(\theta^n)$ as $n\to\infty$, where $\tau_\Lambda(x):=\inf\{n\geq 0: f^n(x)\in \Lambda\}.
    $
\item
Every $\nu\in\Proberg(f|_X)$ with $h(f,\nu)>\hTOP(f|_X)-\delta$ is $\chi$-hyperbolic. 
\end{enumerate}
\end{theorem}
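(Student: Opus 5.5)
We prove the two implications separately, passing through the coding $\pi:\Sigma\to M$ of the Borel homoclinic class $X$ given by the coding theorem of \cref{secSPRCoding}.

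\textbf{SPR $\Rightarrow$ (1)--(3).} Assume $X$ is SPR with constant $\chi_0$, and fix $\chi<\chi_0$ (after possibly shrinking $\epsilon$). First, the SPR definition gives $h_0<\hTOP(f|_X)$ such that every ergodic $\nu$ with $h(f,\nu)>h_0$ gives positive mass to $\Lambda(\chi_0,\epsilon,C_0)$. Such a $\nu$ is then $\chi_0$-hyperbolic: by ergodicity and Oseledets, a typical point of the block has all exponents of modulus $\ge\chi_0-\epsilon$. This gives (3) with $\delta:=\hTOP(f|_X)-h_0$. Next, \cref{CorLocalMME} provides the MME $\mu$, which gives (1). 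For (2), code with $\Sigma=\Sigma_\chi$, which is SPR by item (5) of the coding theorem. By \cite{BCS-SPR}, a Pesin block lifts to a finite union of cylinders; conversely, we choose a finite vertex set $\mathcal V_*$ so that $\pi([\mathcal V_*])\subset\Lambda(\chi,\epsilon,C)$ for a suitable $C$, using that $\pi$ is Hölder and that each vertex carries uniform hyperbolicity constants. Then $\tau_\Lambda\circ\pi\le\tau_{[\mathcal V_*]}$. Moreover, the MME $\hat m$ of an irreducible SPR Markov shift has exponential tails for the hitting time of any finite union of cylinders. One can see this from the spectral gap (the Cyr--Sarig theorem applied to the one-sided shift, after reducing to period $1$), or from the Gurevich characterization that the first-return generating series at a vertex has radius of convergence strictly larger than $e^{-\hTOP}$. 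Since $\mu=\pi_*\hat m$, we get $\mu[\tau_\Lambda>n]\le\hat m[\tau_{[\mathcal V_*]}>n]=O(\theta^n)$.

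\textbf{(1)--(3) $\Rightarrow$ SPR.} Now fix $\epsilon$ and $\Lambda=\Lambda(\chi,\epsilon,C)$ with $\mu[\tau_\Lambda>n]\le K\theta^n$. Suppose SPR fails. Then there are ergodic $\nu_k$ with $h(f,\nu_k)\to\hTOP(f|_X)$ and $\nu_k(\Lambda')\to0$ for every fixed block $\Lambda'$ (with parameters $\chi,\epsilon$ and any fixed constant). By (3), for $k$ large all $\nu_k$ are $\chi$-hyperbolic, so they lift to ergodic measures $\hat\nu_k$ on $\Sigma=\Sigma_\chi$ with the same entropy, and $\hat\mu$ is the MME of $\Sigma$. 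Exponential tails of $\tau_\Lambda$ under $\mu$ should transfer, through the finite-to-one coding and the finite-union-of-cylinders lift of $\Lambda$, to exponential tails of the return time to a finite set $\mathcal V_*$ under $\hat\mu$. For an irreducible shift with an MME, this means the first-return series $\sum_n Z_n^*(v)\,r^n$ at $v\in\mathcal V_*$ converges beyond $r=e^{-\hTOP}$. That is Gurevich's definition of SPR (equivalent to ours by \cite{BCS-SPR}). Hence $\Sigma$ is SPR, so $\hat\nu_k([\mathcal V_*])\ge\gamma_*$ for $k$ large. Pushing forward, and since $\pi([\mathcal V_*])$ lies in a fixed Pesin block, this contradicts $\nu_k(\Lambda')\to0$.

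\textbf{Main obstacle.} The hardest step is this transfer of exponential tails from $\mu$ on $M$ to $\hat\mu$ on $\Sigma$. Entrance of $\pi(x)$ into $\Lambda$ must force entrance of $x$ into a finite set of cylinders within a bounded time. I would try to obtain this from the lifting result of \cite{BCS-SPR} (Pesin blocks lift into finitely many cylinders on $\Sigma^\#$) together with the fact that $\hat\mu$ is carried by $\Sigma^\#$. A second, easier point is the equivalence between the Gurevich first-return criterion and the measure-theoretic definition of SPR, which is invoked from \cite{BCS-SPR}.
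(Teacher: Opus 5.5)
Your proposal is correct and follows the route the paper indicates. You code the class by the Markov shift $\Sigma$; for the forward direction you use that the MME of an irreducible SPR Markov shift hits any finite union of cylinders with exponential tails (the ``exponential filling'' the paper invokes); and for the converse you lift the Pesin block to finitely many cylinders and apply Gurevich's first-return criterion.

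The step you single out as the main obstacle is actually immediate. Since $\pi^{-1}(\Lambda)\cap\Sigma^\#\subset[\mathcal V_*]$ and $\Sigma^\#$ is $\sigma$-invariant, you get $\tau_{[\mathcal V_*]}\le\tau_\Lambda\circ\pi$ on $\Sigma^\#$, which carries $\hat\mu$.

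What does need tightening is parameter bookkeeping. A single fixed coding only projects cylinders into blocks whose $\epsilon$ is at least of the order of its own slow-variation parameter; this comes from slow variation of chart sizes along edges, not from H\"older continuity of $\pi$. So for small $\epsilon$, both in item (2) and in the SPR definition (which quantifies over all $\epsilon$), you must use codings with finer parameters. Also, passing from the finite set $\mathcal V_*$ to the first-return series at a single vertex needs a short renewal argument on the induced finite chain.
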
 

It is quite striking that on surfaces, we obtain a characterization of the SPR property for a Borel homoclinic class that only depends on its MME. We do not know if it can be extended to higher dimension or if there are other such characterizations.

\begin{corollary}\label{thmSPRfromExpRet}
Let $f$ be a $C^{1+}$-diffeomorphism of a surface and let $X$ be a Borel homoclinic class. Then $f|X$ is SPR if and only if there is some $0<\chi<\hTOP(f|X)$ such that, for every $0<\epsilon<\chi$, there is $0<\theta<1$ such that, writing $\Lambda:=\Lambda(\chi,\epsilon,C)$,
    $
    \mu[\tau_\Lambda>n]=O(\theta^n)$ as $n\to\infty$, where $\tau_\Lambda(x):=\inf\{n\geq 0: f^n(x)\in \Lambda\}.
    $
\end{corollary}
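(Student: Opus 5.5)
The plan is to deduce the corollary from \cref{thmSPRfromTail}: on a surface, conditions (1) and (3) there come for free once the exponential tail holds. Throughout, $\mu$ is the MME of $f|_X$, which is unique by \cref{CorLocalMME}.

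For the forward direction, assume $f|_X$ is SPR. By \cref{CorLocalMME}, $\MME(f|_X)\neq\emptyset$, so $\mu$ exists. \Cref{thmSPRfromTail} then supplies $\chi$ and the exponential tail $\mu[\tau_\Lambda>n]=O(\theta^n)$. It remains to arrange $\chi<\hTOP(f|_X)$. Since $\Lambda(\chi',\epsilon,C)\supset\Lambda(\chi,\epsilon,C)$ whenever $\chi'\le\chi$, the hitting time $\tau$ can only decrease when we shrink $\chi$. So we may replace $\chi$ by any smaller positive number, for every $\epsilon<\chi'$. The class has positive entropy because it carries a Pesin block, and hence is not a single periodic orbit.

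For the converse, we must produce $\mu$ and the data of \cref{thmSPRfromTail}. The key preliminary step is to make sure $\mu$ exists. I read the statement as implicitly asserting that the MME exists; otherwise the hypothesis on $\mu$ is meaningless. This step is the main point to check: if needed, one would use the coding to build the MME from the fact that $\mu(\Lambda)>0$, which follows from the tail bound.

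Granting that $\mu$ exists, condition (1) of \cref{thmSPRfromTail} holds, and condition (2) is exactly the hypothesis. For condition (3), we use the Ruelle inequality (\cref{thmRuelleInequ}) on a surface. Any ergodic $\nu$ with $h(f,\nu)>\chi$ is $\chi$-hyperbolic of saddle type, since $\lambda^1\ge h>\chi$, and the same bound applied to $f^{-1}$ gives $-\lambda^2\ge h>\chi$. Taking $\delta:=\hTOP(f|_X)-\chi>0$, which is positive because $\chi<\hTOP(f|_X)$, every $\nu$ with $h(f,\nu)>\hTOP(f|_X)-\delta$ is therefore $\chi$-hyperbolic. \Cref{thmSPRfromTail} then gives SPR.

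The only delicate step is the strict-versus-nonstrict inequality in the definition of $\chi$-hyperbolicity. Since the inequality $h>\chi$ is strict, we get $|\lambda^i|>\chi$ as required, so this causes no difficulty.
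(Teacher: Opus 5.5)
Your route is the paper's. On a surface, item (3) of the exponential-tail theorem follows from Ruelle's inequality with $\delta=\hTOP(f|_X)-\chi$, so SPR reduces to the existence of an MME with an exponential tail. Your monotonicity remark ($\Lambda(\chi',\epsilon,C)\supset\Lambda(\chi,\epsilon,C)$ for $\chi'\le\chi$, so $\tau$ only decreases) correctly supplies a step the paper leaves implicit: pushing $\chi$ below $\hTOP(f|_X)$ in the forward direction. The paper also reads the existence of the MME as part of the condition, as you do.

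One claim is false: carrying a Pesin block does not force positive entropy. A saddle periodic orbit $\mathcal O$ whose exponents exceed $\chi$ in absolute value lies in $\Lambda(\chi,\epsilon,C)$ for $C$ large. Suppose its class is just $\{\delta_{\mathcal O}\}$, e.g.\ a saddle of a Morse--Smale diffeomorphism. Then $f|_X$ is SPR (take $h_0<0$ and $m_0=1$), while $\hTOP(f|_X)=0$. So no admissible $\chi$ exists, and the forward direction fails as literally stated. Positivity of $\hTOP(f|_X)$ has to be a tacit hypothesis, as it is in the paper's one-line proof; it cannot be derived. Likewise, drop the aside about building the MME via the coding from $\mu(\Lambda)>0$. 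Without an MME there is no $\mu$ to evaluate, so that argument is circular, and existence must come from the hypothesis.
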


Indeed, item (3) above is ensured by Ruelle's inequality \cref{thmRuelleInequ}, hence SPR is equivalent to the existence of some MME for $f|_X$ satisfying item (2) for some $0<\chi<\hTOP(f)$.

\medbreak
\paragraph{Dynamical stochastic processes.} 
Recall that a stochastic process is a family of random variables, i.e., measurable functions defined on some common probability space. Given a function $\psi:M\to\mathbb R$, the dynamical system $(f,\mu)$ gives rise to a stochastic process according to $\psi_n(x):=\psi(x)+\psi\circ f(x)+ \dots +\psi\circ f^{n-1}(x)$ where $\mu$ is the law of $x\in M$. 
We denote the \emph{expectation} by $\mathbb E_\mu(\psi):=\int_M \psi\, d\mu$ and the \emph{asymptotic variance}:
 $$
    \sigma_{f,\mu,\psi}^2 := \lim_{n\to\infty} \frac1n\int_M |f-\mathbb E_\mu(f)|^2\, d\mu.
 $$
We often omit the dependence of this variance on the dynamics $(f,\mu)$ and denote it simply by $\sigma^2_\psi$. This quantity contains a lot of information related, e.g., to the decay of correlations or the thermodynamical formalism (see Theorem 11.13 in \cite{BCS-SPR}).

When $\psi$ is regular, the hyperbolicity of $f$ gives rise to some weak independence of the summands $\psi\circ f^k(x)$ and one can hope that most of the asymptotic properties of $(\psi_n)_{n\ge0}$ are the same as those of a sum of i.i.d. random variables, except in degenerate ("non random") cases. Here we give one characterization of it:

\begin{theorem}[B-Crovisier-Sarig \cite{BCS-SPR}]
Let $f$ be a $C^{1+}$-diffeomorphism and let $X$ be an SPR Borel homoclinic class.
Let $\psi$ be a H\"older function on $X$. Its asymptotic variance $\sigma_{f,\mu,\psi}^2$ vanishes if and only if there is $c\in\mathbb R$ such that, for all $n$-periodic point $q\in X$, $\frac1n\psi_n(q)=c$. 
\end{theorem}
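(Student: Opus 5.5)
The plan is to transfer the question to the symbolic coding of \cref{secSPRCoding} and use a Livšic-type theorem for SPR Markov shifts together with the spectral gap of Cyr--Sarig.

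\textbf{Easy direction.} If $\frac1n\psi_n(q)=c$ for every $n$-periodic point $q\in X$, then the same holds for $\phi:=\psi\circ\pi$ at every periodic point of $\Sigma$, since $\pi$ maps periodic points to periodic points of the same (or dividing) period. The function $\phi$ is Hölder on $\Sigma$ because $\pi$ is Hölder. Irreducible Markov shifts satisfy a Livšic theorem: a Hölder function whose periodic orbit sums all vanish is a Hölder coboundary. This needs only a dense orbit and the Anosov closing property, which is automatic symbolically. Hence $\phi-c=u\circ\sigma-u$ with $u$ Hölder on $\Sigma$.

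To descend to $M$, pick the lift $\hat m$ of the MME $m$, which exists by entropy preservation (item 4) and uniqueness (\cref{CorLocalMME}). Then $\psi_n-nc$ equals $u\circ\sigma^n-u$ along $\hat m$-typical points. This quantity is bounded in $L^2(\hat m)$ even though $u$ may be unbounded on a non-compact shift: one handles this by first replacing $\phi$ with a cohomologous function depending only on the past, with $u$ integrable thanks to SPR exponential tails. So $\sigma_\psi^2=0$.

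\textbf{Hard direction.} Assume $\sigma^2_\psi=0$ for $\mu=m$. Lift to $(\Sigma,\hat m)$; the asymptotic variance is preserved because $\pi$ is a measure isomorphism on the regular part (finite-to-one, entropy-preserving). Reduce to period $1$ by passing to $\sigma^{p}$ on one cyclic class. Then pass to the one-sided shift $\Sigma^+$ by the usual Sinai trick, writing $\phi=\phi^++w\circ\sigma-w$ with $\phi^+$ Hölder depending only on nonnegative coordinates.

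On $\Sigma^+$, use the Cyr--Sarig spectral gap for $L$ on $\mathcal L\supset C^\beta$. The Green--Kubo formula converges, giving
$$\sigma^2=\int \bar\phi^2\,d\hat m+2\sum_{k\ge1}\int\bar\phi\cdot\bar\phi\circ\sigma^k\,d\hat m,$$
where $\bar\phi:=\phi^+-\int\phi^+\,d\hat m$. Exponential decay of the correlations comes from $\rho(N)<1$. The standard Gordin argument then shows that $\sigma^2=0$ forces $\bar\phi=g-g\circ\sigma$ in $L^2$, with
$$g:=\sum_{k\ge0}L^k\bar\phi=(I-N)^{-1}\bar\phi\in\mathcal L.$$
Since $\mathcal L\subset C(\Sigma^+)$, $g$ is continuous, and the identity holds $\hat m$-a.e. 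Because $\hat m$ has full support (it gives positive mass to every cylinder), the identity holds everywhere. Summing along a periodic orbit of period $n$ then gives $\phi_n=n\int\phi\,d\hat m$. Pushing down by $\pi$, every periodic point $q\in X$ of period $n$ has a periodic preimage in $\Sigma^\#$, by the construction of the coding and since $\delta_{\mathcal O}$ is $\chi$-hyperbolic (item 3). This yields $\frac1n\psi_n(q)=c$ with $c=\int\psi\,dm$.

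\textbf{Main obstacle.} The delicate step is the everywhere-validity and continuity of the transfer function $g$ on the \emph{non-compact} shift. It must be continuous, and the coboundary identity must be evaluated at all periodic points, not merely a.e. I would first try to use that $\mathcal L$ consists of continuous functions and that $\hat m$ charges all open sets. Should $\mathcal L$ only give local continuity, I would instead apply the Livšic argument directly to periodic orbits passing through the finite set $\mathcal V_*$. I would then extend to all periodic orbits by irreducibility, concatenating each orbit with loops through $\mathcal V_*$ and using that $\frac1n\psi_n$ is additive on concatenations.
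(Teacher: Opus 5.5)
\paragraph{Main gap: lifting all periodic orbits from one coding.}
Your architecture matches the route the paper points to. The survey gives no proof of this statement and defers to \cite{BCS-SPR}, but the intended method is exactly this: lift to the SPR coding, reduce to period one and to the one-sided shift, get a continuous transfer function from the Cyr--Sarig spectral gap, and use full support of $\hat m$. However, the final push-down step of your hard direction fails as written. You fix a single coding $\Sigma=\Sigma_\chi$ and assert that every periodic orbit $\mathcal O\subset X$ lifts to it because $\delta_{\mathcal O}$ is $\chi$-hyperbolic. Nothing gives this. SPR only forces uniform hyperbolicity (mass on a common Pesin block $\Lambda(\chi_0,\epsilon,C_0)$) for ergodic measures whose entropy is close to $h(X)$. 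Periodic orbits have zero entropy, and a homoclinic class may contain saddles whose exponents accumulate on $0$. Such orbits are not seen by $\Sigma_\chi$, so your argument only yields $\frac1n\psi_n(q)=c$ for the $\chi$-hyperbolic periodic points of $X$.

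The repair is to let the coding depend on the orbit:
\begin{enumerate}
\item Given $\mathcal O\subset X$, choose $0<\chi'<\chi_0$ smaller than the moduli of the exponents of $\mathcal O$.
\item By items 3 and 5 of the coding theorem, $\Sigma_{\chi'}$ is an irreducible SPR shift. It still carries a lift of $m$, because $m$ charges $\Lambda(\chi_0,\epsilon,C_0)$ and so its exponents have modulus at least $\chi_0$. It also carries a lift of $\delta_{\mathcal O}$.
\item Finiteness of $\pi^{-1}(q)\cap\Sigma_{\chi'}^\#$, together with injectivity of $\sigma^n$ on this $\sigma^n$-invariant set, makes that lift $\sigma$-periodic.
\item The constant produced in $\Sigma_{\chi'}$ is $\int\psi\,dm$ for every $\chi'$, since the MME of $\Sigma_{\chi'}$ projects to $m$. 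Hence the conclusions obtained from different codings are compatible.
\end{enumerate}

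\paragraph{Two smaller slips.}
First, your Gordin step is mis-indexed. With $L$ dual to the Koopman operator (so $L(h\circ\sigma)=h$), set $g:=\sum_{k\ge1}L^k\bar\phi=(I-N)^{-1}N\bar\phi$ and $\eta:=\bar\phi+g-g\circ\sigma$. Then $L\eta=0$ and $\sigma^2=\|\eta\|_2^2$, so vanishing variance gives $\bar\phi=g\circ\sigma-g$. Your $g=\sum_{k\ge0}L^k\bar\phi$ does not satisfy $\bar\phi=g-g\circ\sigma$. For instance, on $\bar\phi=h\circ\sigma-h$ it equals $h\circ\sigma-\int h\,d\hat m$, and then $g-g\circ\sigma=-\bar\phi\circ\sigma$. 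Since any continuous transfer function kills periodic sums, the conclusion survives. Note also that $g\in L^2$ follows from $\|L^k\bar\phi\|_2^2\le\|\bar\phi\|_\infty\|L^k\bar\phi\|_1$.

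Second, in the easy direction the detour through a past-dependent cohomologous function is unnecessary.
\begin{enumerate}
\item The symbolic Liv\v{s}ic transfer function $u$ is bounded on each $1$-cylinder.
\item Iterating $u\circ\sigma-u=\phi-c$ backward gives $|u(x)|\le A+B\,\tau^-(x)$, where $\tau^-$ is the backward hitting time of the finite set $\mathcal V_*$.
\item The exponential tail of $\tau^-$ under $\hat m$ (the ``exponentially filling'' property of SPR shifts) then gives $u\in L^2(\hat m)$.
\item Hence $\frac1n\|u\circ\sigma^n-u\|_2^2\to0$.
\end{enumerate}
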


\medbreak
\paragraph{Large deviations principle.}
We obtain the following generalization of a result of Kifer for uniformly hyperbolic diffeomorphisms \cite{Kifer-1990}. Let us give here a simple corollary of \cite[Thm 11.15]{BCS-SPR}.

\begin{theorem}[B-Crovisier-Sarig \cite{BCS-SPR}]\label{thmLDP-diffeo}
Let $f\in\Diff^{1+}(M)$ be SPR with some measure $\mu$ maximizing the entropy.
For every $\alpha>0$ there is $c>0$ such that for all $\psi\in C^\alpha(X)$ with  $\int\psi d\mu=0$ 
Then,  
for every $\epsilon>0$, for every $a>0$ small enough,
 $
   \mu\{x\in M: \psi_n(x)\ge n\cdot a\} \le e^{-(1-\epsilon)a^2/2\sigma^2_\psi}
 $
\end{theorem}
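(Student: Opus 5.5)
The plan is to transfer the problem to the symbolic coding and establish there a large deviation bound with Gaussian rate, using the spectral gap of the transfer operator.

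\emph{Step 1: reduction to a Markov shift.} By \cref{thmSPRfinBernoulli}, $\mu$ is carried by an SPR Borel homoclinic class $X$; for simplicity we assume $\per(X)=1$. Fix $\chi$ smaller than the exponents of $\mu$ and take the coding $\pi:\Sigma\to M$ given above. The lift $\hat\mu$ of $\mu$ is the unique MME of $\Sigma$, because $\pi$ preserves entropy and is finite-to-one on $\Sigma^\#$. Since $\pi$ is H\"older, $\hat\psi:=\psi\circ\pi$ is H\"older on $\Sigma$ with a uniformly controlled exponent. The set $\{\psi_n\ge na\}$ has the same measure as $\{\hat\psi_n\ge na\}$.

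\emph{Step 2: one-sided reduction.} Using the standard Sinai/Bowen cohomology trick, write $\hat\psi=\phi+u-u\circ\sigma$, where $\phi$ depends only on future coordinates and $u$ is bounded and H\"older. The Birkhoff sums then differ by at most $2\|u\|_\infty$. This changes the threshold $na$ by $O(1)$, which is harmless for the exponential rate.

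\emph{Step 3: Chernoff bound via the perturbed operator.} Use the Cyr--Sarig spectral gap for $L$ on $\mathcal L$. Consider the twisted operators $L_t\psi:=L(e^{t\phi}\psi)$. Then
\[
\int e^{t\phi_n}\,d\hat\mu=\int L_t^n1\,d\hat\mu .
\]
For $|t|$ small, analytic perturbation theory gives a simple leading eigenvalue $\lambda(t)=e^{P(t)}$ with $P(0)=0$, $P'(0)=\int\phi=0$ and $P''(0)=\sigma^2_\psi$, with the rest of the spectrum uniformly inside a smaller disc. Hence
\[
\int e^{t\phi_n}\,d\hat\mu\le C e^{nP(t)} .
\]
Markov's inequality then yields
\[
\hat\mu(\phi_n\ge na)\le C\exp\bigl(-n(ta-P(t))\bigr).
\]
Optimizing with $t\approx a/\sigma^2$, and using $P(t)\le (1+\epsilon)\sigma^2t^2/2$ for $|t|$ small, gives a rate of at least $(1-\epsilon)a^2/2\sigma^2$ per unit time for all small $a$. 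The prefactor $C$ and the $O(1)$ shift from Step 2 are absorbed by the $\epsilon$-loss. (The statement's bound should carry a factor $n$ in the exponent.)

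\emph{Main obstacle.} The hard step is checking that $t\mapsto L_t$ is an analytic, or at least $C^2$, family on the Banach space $\mathcal L$. Multiplication by $e^{t\phi}$ must act boundedly on $\mathcal L$ with norm estimates uniform in $\alpha$. This is needed to identify $P''(0)$ with the asymptotic variance and to keep the constant $c$ uniform over $\|\psi\|_\alpha$ bounded.

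If $\mathcal L$ is not stable under multiplication by H\"older functions, I would instead work with the induced system. Use the first return to the finite set of SPR vertices $\mathcal V_*$, whose return time has an exponential tail, and apply Nagaev--Guivarc'h perturbation there. The degenerate case $\sigma_\psi=0$ is covered either by reading the bound as trivial or by using the coboundary characterization stated earlier.
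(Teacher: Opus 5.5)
Your route is essentially the argument behind the statement: lift $\mu$ to the SPR coding, pass to the one-sided shift, perturb the spectral gap to get an analytic pressure $P(t)$ with $P'(0)=0$ and $P''(0)=\sigma_\psi^2$, then apply a Chernoff bound optimized at $t\approx a/\sigma_\psi^2$. The paper gives no separate proof and presents the statement as a simple corollary of the large deviation theorem \cite[Thm 11.15]{BCS-SPR}, which is derived from the SPR coding and the thermodynamic formalism of the one-sided SPR Markov shift. You are also right that the exponent should carry a factor $n$, with the bound read asymptotically in $n$.

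There are two caveats. First, the step you flag is real: the simplified Cyr--Sarig statement quoted in the text says nothing about $L_t=L(e^{t\phi}\,\cdot\,)$. The analytic dependence on $t$ (equivalently, analyticity of $t\mapsto P(t\psi)$ with second derivative $\sigma_\psi^2$) has to come from the fuller treatment in \cite{BCS-SPR}. Second, $\sigma_\psi=0$ must be excluded by hypothesis rather than ``covered''. In that case the right-hand side degenerates to $0$, which is the strongest possible bound, not a trivial one. Moreover, a coboundary on a countable Markov shift may have an unbounded transfer function, so $\psi_n$ need not stay bounded.
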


\paragraph{Approximation by Brownian motion.}
We are going to relate the process $(\psi_n)_{n\ge0}$ to a Brownian motion in the sense of the ASIP defined below. This is well-known to allow one to  recover many limit theorems valid for the Brownian motion such as Central Limit Theorems, Law of Iterated Logarithm, Arc Sine Law, or Law of Records.

\medbreak

Recall that a \emph{standard Brownian motion} is a family of functions $B_t:\Omega\to\mathbb R$ $(t\in [0,\infty))$ all defined on the same standard probability space $(\Omega,\mathcal F,m)$ such that: 
(1) $(t,\omega)\mapsto B_t(\omega)$ is measurable;
(2) $B_0\equiv 0$, and $B_t-B_s$ has Gaussian distribution with mean zero and variance $|t-s|$, i.e. $m\{\omega:B_t(\omega)-B_s(\omega)<\tau\}=(2\pi|t-s|)^{-1/2}\int_{-\infty}^\tau e^{-x^2/2|t-s|}dx$;
(3) For each $0<t_1<\cdots<t_{n+1}$, $B_{t_{i+1}}-B_{t_i}$ are independent random variables, i.e., $m(\bigcap_i\{\omega: B_{t_{i+1}}(\omega)-
    B_{t_{i}}(\omega)\in E_i \})=\prod_i m\{\omega: B_{t_{i+1}}(\omega)-
    B_{t_{i}}(\omega)\in E_i\}$ for every Borel sets $E_i\subset\mathbb R$.

\medbreak
\begin{definition}[{\bf ASIP}]\label{def-ASIP}
We say that a stochastic process $(S_n)_{n\ge1}$ satisfies the \emph{almost sure invariance principle (ASIP)} with parameter $\sigma\ge0$ and rate $o(n^\gamma)$ for $0<\gamma<\tfrac{1}{2}$, if there exist two stochastic processes  $(\widetilde{S}_n)_{n\geq 1}$ and $(\widetilde{B}_t)_{t\geq 0}$ defined on a common standard probability space
such that
\begin{enumerate}
\item the stochastic processes $(\widetilde{S}_n)_{n\geq 1}$ and  $(S_n)_{n\geq 1}$ are equal in distribution;
\item $({\widetilde B}_t)_{t\geq 0}$ is a standard Brownian motion;
\item $|\widetilde{S}_n-\sigma {\widetilde B}_n|=o(n^{\gamma})$ a.e. as $n\to\infty$. 
\end{enumerate}
\end{definition}

\begin{remark}
If a stochastic process satisfies the ASIP for some value of $\sigma$ above, this value is then uniquely defined. Indeed, the ASIP gives the convergence of   $S_n/\sqrt{n}$  to a normal distribution, so $\sigma$ is the standard deviation of the limit.
\end{remark}

We obtain the following result, generalizing the work  of Denker \& Philipp \cite{Denker-Philipp} for Anosov diffeomorphisms.

\begin{theorem}\label{t.ASIP-diffeo}
Let $f$ be a $C^{1+}$ diffeomorphism of a closed manifold, let $\mu$ be the MME of $f$. Suppose $\psi:M\to \mathbb R$ is  H\"older with $\int\psi d\mu=0$.
Then $S_n(x):=\psi(x)+\psi(f(x))+\cdots+\psi(f^{n-1}(x))$
satisfies the ASIP 
with $\sigma=\sigma_\psi$. 
 \end{theorem}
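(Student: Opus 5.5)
The plan is to transfer the ASIP from the symbolic model via the coding of \cref{secSPRCoding}. I read the statement under the section's standing hypothesis that $f$ is SPR with MME $\mu$, totally ergodic (period $1$). The general case reduces to this one by passing to $f^{\per(X)}$ on a cyclic piece.

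By \cref{thmSPRfinBernoulli}, $\mu$ is carried by an SPR Borel homoclinic class $X$. Since every measure of nearly maximal entropy is $\chi$-hyperbolic for some $\chi>0$, we can fix the coding $\pi:\Sigma\to M$ with $\Sigma$ an irreducible SPR Markov shift.

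\textbf{Lifting the measure.} By properties (2)–(4) of the coding, $\mu$ lifts to an ergodic $\hat\mu\in\Prob(\Sigma)$ with $\pi_*\hat\mu=\mu$ and $h(\sigma,\hat\mu)=h(f,\mu)$. Because $\pi$ is finite-to-one on $\Sigma^\#$ and preserves entropy, no measure on $\Sigma$ has entropy above $\hTOP(f|_X)$. Hence $\hat\mu$ is the MME of $\Sigma$, unique by Gurevich's theorem, and it is of period $1$.

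\textbf{Reduction to a process on $\Sigma$.} Set $\hat\psi:=\psi\circ\pi$, which is H\"older on $\Sigma$ because $\pi$ is H\"older. The processes $(S_n)$ under $\mu$ and $(\hat\psi_n)$ under $\hat\mu$ are equal in distribution. The asymptotic variances also agree, since $\sigma^2_{\psi}$ is computed from correlations, which are preserved by the factor map. So it suffices to prove the ASIP for $(\hat\psi_n,\hat\mu)$ with $\sigma=\sigma_{\hat\psi}$.

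\textbf{Passing to the one-sided shift.} Use the standard Sinai–Bowen trick: write $\hat\psi=\phi+u-u\circ\sigma$, where $\phi$ depends only on future coordinates and is H\"older on $\Sigma^+$, and $u$ is bounded (indeed H\"older). This is legitimate with countably many symbols because H\"older continuity is with respect to the metric $d$ on $\Sigma$, and the usual telescoping construction using a fixed choice of past for each vertex gives uniform bounds. The coboundary contributes $O(1)$ to partial sums, so it does not affect the ASIP at any rate $o(n^\gamma)$.

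\textbf{Main step: ASIP on $\Sigma^+$.} On $\Sigma^+$ we apply the Cyr–Sarig theorem. The transfer operator $L$ of the MME has a spectral gap on $\mathcal L\supset C^\beta(\Sigma^+)$. The perturbed operators $L_t\psi:=L(e^{it\phi}\psi)$ are analytic in $t$, since multiplication by $e^{it\phi}$ with $\phi$ H\"older acts boundedly on $\mathcal L$, which I expect follows from the construction. With these two facts, the Gouëzel spectral criterion for the ASIP applies: it requires control of characteristic functions of blocks of sums, which follows from the spectral gap via perturbation theory. It yields the ASIP with rate $o(n^\gamma)$ for every $\gamma>1/4$ and with variance $\sigma_\phi^2=\sigma_\psi^2$.

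An alternative route is a martingale approximation: solve $\phi=\phi'+g-g\circ\sigma$ with $g$ from $\sum_n L^n\phi$, which converges by the gap, and then use Strassen's theorem for reverse martingales.

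\textbf{Expected obstacle.} The hardest part is checking that Gouëzel's hypotheses hold on the abstract Banach space $\mathcal L$ of Cyr–Sarig. The properties needed are boundedness and analyticity of the twisted operators, together with uniform control of iterated twists. The space $\mathcal L$ is not simply a H\"older space, and $\phi$ may be unbounded in norm across infinitely many cylinders. If this fails, I would instead induce on a finite set of vertices $\mathcal V_*$. SPR gives exponential tails for the return time, as in \cref{thmSPRfromTail}, so the induced system is a Gibbs–Markov map with a big-image property. There the ASIP for induced sums is classical, and it lifts back to the original system because the return times have exponential tails.
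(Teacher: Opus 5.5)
Your proposal is correct and follows essentially the strategy the paper indicates, with details deferred to \cite{BCS-SPR}: lift the MME through the SPR coding to the unique MME of the irreducible SPR Markov shift, pass to $\Sigma^+$ by a bounded coboundary, and push the ASIP down from the Cyr--Sarig spectral gap by perturbing to $L(e^{it\phi}\,\cdot)$. You leave two ingredients implicit: boundedness and analyticity of these twisted operators on the Cyr--Sarig space, and your unproved claim that $\hat\mu$ has period $1$ (safer handled via $\sigma^{\per(\Sigma)}$ on a cyclic class of $\Sigma$ rather than $f^{\per(X)}$); both are exactly what the survey's ``simplified version'' of the spectral gap theorem and its remarks on period reduction leave to \cite{BCS-SPR}.
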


This concludes our review of the consequences of the SPR property either globally or on a Borel homoclinic class in arbitrary dimension.

\section{Finding SPR in higher dimensions}
We turn to the question of finding SPR diffeomorphisms in higher dimensions. We already noted that one cannot expect that positive topological entropy would imply SPR in dimension $3$ or more.

\newcommand\wsc{\to}

\subsection{Characterization by Lyapunov exponents}
As on surfaces, our main tool will relie on Lyapunov exponents. Let us consider $f\in\Diff^1(M)$ and try to prove SPR for an  invariant Borel set $X\subset M$ --in our applications it will be either the whole of $M$ or a Borel homoclinic class. 

We are going to generalize \cref{thm-CE-SPR} from surfaces to arbitrary dimension. 

First, we will need some hyperbolicity condition, since SPR requires all measures of large entropy to be $\chi$-hyperbolic for some $\chi>0$. Note that if our diffeomorphism is $C^{1+}$ and admits some Ben Ovadia coding that is SPR, then any sequence of ergodic measures with entropy converging to the top entropy will have lifts that converge to the MME above. Hence the limit below should also be $\chi$-hyperbolic  and ergodic. We will use the following, weaker condition:
\begin{enumerate}
\item[(EH)] \emph{\bf Entropy Hyperbolicity on $X$}:
There is $\chi>0$  such that for all $\mu_n\in \Proberg(f|_X)$ that weakly converge to some $\mu$ on $M$ with  $h(f,\mu_n)\to \hTOP(f|_X)$, there exists a constant $i:=i(\mu)$ such that 
$ \lambda^i(x) >\chi>-\chi>\lambda^{i+1}(x)\text{ $\mu$-a.e.}$
\end{enumerate}

Remembering the proof of \cref{thm-CE-SPR}, we need an extra condition to ensure that the lifts of the measures $\nu_k$ to the Grassmanian bundle by the map defined by the unstable spaces: $x\mapsto E^+_x$ also converge. It turns out to suffice to consider the sum of the positive exponents:
 $$
    \Lambda^+(\mu) = \sum_{i=1}^d \max(\lambda^i(f,\mu),0)
 $$
and require the following continuity:
\begin{enumerate}
\item[(EC)] \emph{\bf Entropy Continuity of $\Lambda^+$ on $X$:}
Suppose   $\mu_n\in \Proberg(f|_X)$ and 
$\mu_n\wsc\mu$ on $M$. If $h(f,\mu_n)\to \hTOP(f|_X)$, then
$
    \lim_{n\to\infty} \Lambda^{+}(\mu_n)=\Lambda^{+}(\mu).
$
\end{enumerate}

Indeed, it follows by the same strategy as in the two-dimensional version (\cref{thmSPRECLEsurf}) that, for a $C^1$-diffeomorphism and an invariant Borel set $X$, (EH) and (EC) together imply SPR.
When $f$ is $C^{1+}$ and $X$ is a Borel homoclinic class, we can use the coding and get the converse 

\begin{theorem}[B-Crovisier-Sarig \cite{BCS-SPR}]\label{thm-EHEC}
Let $f$ be a $C^{1+}$-diffeomorphism and $X\subset M$ be a Borel homoclinic class or the whole of $M$.
Then $f|_X$ is  SPR if and only if it satisfies (EH) and (EC).
\end{theorem}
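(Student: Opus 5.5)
The plan is to prove the two implications separately: (EH)+(EC) $\Rightarrow$ SPR by a purely $C^1$ argument, and SPR $\Rightarrow$ (EH)+(EC) through the SPR Markov shift coding of the previous section.

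\emph{(EH)+(EC) imply SPR.} Argue by contradiction. Suppose there are $\epsilon>0$ and ergodic $\nu_k$ with $h(f,\nu_k)\to\hTOP(f|_X)$ and $\nu_k(\Lambda(\chi_0,\epsilon,C))\to 0$ for every fixed $C$. Pass to a subsequence with $\nu_k\to\mu$. By (EH) there is an index $i$ and a gap $\lambda^i>\chi>-\chi>\lambda^{i+1}$ $\mu$-a.e.

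Next lift everything to the Grassmannian bundle of $i$-planes, using $\hat\nu_k$ carried by $x\mapsto E^+_x$. Weak limits $\hat\mu$ of the $\hat\nu_k$ project to $\mu$. The function $\int\log|\det Df|_E|\,d\hat\mu$ is continuous in $\hat\mu$, and for any invariant lift it is bounded above by $\lambda^1+\dots+\lambda^i$ of the base measure, with equality only if $E$ is the unstable Oseledets space a.e. By (EC), $\Lambda^+(\nu_k)\to\Lambda^+(\mu)$, and the gap at $\mu$ shows this equals $\lambda^1+\dots+\lambda^i$. Hence $\hat\mu$ is carried by the graph of $E^+$, which gives convergence of the unstable directions in measure. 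The same argument applied to $f^{-1}$ and the stable bundle handles contraction; note that $\Lambda^-$ continuity follows from $\Lambda^+$ continuity together with continuity of $\int\log|\det Df|$.

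With convergence of directions in hand, choose $N$ such that $\frac1N\log\|Df^{-N}|_{E^+}\|<-\chi'$ on a set of $\mu$-measure close to $1$. For large $k$ this transfers to a set of $\nu_k$-measure close to $1$. A Pliss lemma / maximal ergodic inequality argument then produces a set of $\nu_k$-measure bounded below consisting of points with uniform hyperbolic times in both directions and slow degradation. These points lie in a fixed $\Lambda(\chi_0,\epsilon,C)$, which contradicts the assumption. This in fact gives the stronger entropy-tightness (ET).

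\emph{SPR implies (EH)+(EC).} First treat $X$ a Borel homoclinic class, where the coding theorem gives an SPR irreducible $\Sigma$ with $\pi$ Hölder. Given $\nu_k\to\mu$ with $h(f,\nu_k)\to\hTOP$, lift the $\nu_k$ to ergodic $\tilde\nu_k$ on $\Sigma$ of equal entropy; this uses items 3–4 of the coding theorem and the SPR hypothesis, since large-entropy measures are $\chi$-hyperbolic. In an SPR Markov shift the measures are tight, since entropy close to maximal forces uniform mass on a finite set of vertices together with exponential return bounds. Moreover, any limit of $\tilde\nu_k$ with entropy tending to $\hTOP(\Sigma)$ is the unique MME $\tilde m$, with no mass escaping; this is the key symbolic input, from Gurevich theory / the Cyr–Sarig spectral gap. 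So $\mu=\pi_*\tilde m$, which is $\chi$-hyperbolic, and this gives (EH).

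For (EC), express $\Lambda^+(\nu)=\int\log|\det Df|_{E^u}|\,d\nu$. On $\Sigma$, $E^u\circ\pi$ is a continuous (Hölder) function of the symbolic point on cylinders of the finite set $\mathcal V_*$ charged by all large-entropy measures. Because of the exponential tails, uniform integrability lets us pass to the limit. The whole-$M$ case then follows from \cref{propGlobalSPR}: finitely many top classes, and the remaining measures have entropy below $h_0$, so the sequence eventually lives in these classes.

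The main obstacle will be the converse direction. We must show that lifted measures of nearly maximal entropy converge to the MME on the non-compact shift without loss of mass, and that the unstable-Jacobian observable, which is only locally Hölder on $\Sigma$, is uniformly integrable along such sequences. To handle this, I would first use the SPR property of $\Sigma$ to get uniform exponential tails for return times to $\mathcal V_*$, and then bound the observable along excursions linearly in the excursion length.
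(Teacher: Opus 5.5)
Your route is the one the paper sketches. For (EH)+(EC) $\Rightarrow$ SPR you use a purely $C^1$ argument: lift to a Grassmannian bundle, transfer an $N$-step contraction/expansion to a set of nearly full $\nu_k$-measure, then apply Pliss. For the converse you use the SPR coding of the class and the convergence of large-entropy lifts to the symbolic MME.

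One step of the first direction does not go through as written. You lift $\nu_k$ to $i$-planes ``using $x\mapsto E^+_x$'', with $i=i(\mu)$ coming from the \emph{limit}, and you identify $\int\log|\det Df|_E|\,d\hat\nu_k$ with $\Lambda^+(\nu_k)$. This presupposes that each $\nu_k$ has exactly $i$ positive exponents, and nothing you have used gives that. For $X=M$ the $\nu_k$ need not even be hyperbolic, and for a Borel homoclinic class the common unstable dimension must still be matched with $i(\mu)$. Set $S_j(\nu):=\lambda^1(f,\nu)+\dots+\lambda^j(f,\nu)$, with $S_0:=0$. If you lift along the top-$i$ Oseledets directions instead, you only get $S_i(\nu_k)\le\Lambda^+(\nu_k)$. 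Then (EC) merely bounds both $\lim_k S_i(\nu_k)$ and $\int\log|\det Df|_E|\,d\hat\mu$ from above by $S_i(\mu)$, so the equality case is never forced.

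The missing ingredient is the upper semicontinuity of \emph{every} partial sum, $S_j(\nu)=\inf_n\frac1n\int\log\|\wedge^jDf^n\|\,d\nu$.
\begin{itemize}
\item The $\chi$-gap at $\mu$ gives $S_j(\mu)\le S_i(\mu)-\chi$ for all $j\ne i$, hence $\limsup_k S_j(\nu_k)\le S_i(\mu)-\chi$.
\item For ergodic $\nu_k$, (EC) says $\Lambda^+(\nu_k)=\max_{0\le j\le d}S_j(\nu_k)\to S_i(\mu)$.
\item So for large $k$: $\Lambda^+(\nu_k)=S_i(\nu_k)\to S_i(\mu)$, $\lambda^i(f,\nu_k)\ge\chi-o(1)$ and $\lambda^{i+1}(f,\nu_k)\le-\chi+o(1)$.
\end{itemize}
Only after this is your lift along $E^+$ defined and your equality argument valid.

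For the converse, the obstacle you single out is not one. For every subspace $E$ one has $|\log|\det D_xf|_E||\le d\,L$ with $L:=\max(\log\sup\|Df\|,\log\sup\|Df^{-1}\|)$. So the lifted unstable Jacobian is bounded on all of $\Sigma$, and no uniform-integrability or excursion estimates are needed. What you actually need is:
\begin{itemize}
\item (a) weak convergence $\tilde\nu_k\to\tilde m$ of probability measures on $\Sigma$;
\item (b) continuity on $\Sigma$ of the unstable direction that the coding attaches to $\underline{x}$.
\end{itemize}
Given these, $\int\psi\,d\tilde\nu_k\to\int\psi\,d\tilde m$ is immediate.

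Note also that the SPR definition by itself only gives mass $>\gamma_*$ on the cylinder of $\mathcal V_*$, not tightness. So (a) does not follow from ``uniform mass on a finite set''. It is the genuinely symbolic input (entropy-tightness of SPR shifts, e.g.\ via entropy at infinity below $\hTOP(\Sigma)$, or Ruhr--Sarig), and it is what the coding is used for in the paper.
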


The rigidity phenomenon still holds:

\begin{theorem}[B-Crovisier-Sarig \cite{BCS-SPR}]\label{thm-CE-SPR}
Let $f$ be a $C^{1+}$-diffeomorphism and $X\subset M$ be an SPR Borel homoclinic class. Then, writing $m$ for the unique MME of $X$ 
 $$
   \forall\mu\in\Prob(f|_X)\quad |\Lambda^+(m)-\Lambda^+(\mu)| \leq C\sqrt{h(f,m)-h(f,\mu)}.
 $$
\end{theorem}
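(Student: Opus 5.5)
We pass to the coding and work on the symbolic side, where a Ruhr--Sarig type estimate is available.

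\emph{Step 1: reduce to the symbolic setting.} Fix $\chi>0$ smaller than the hyperbolicity constant given by (EH) and \cref{thm-EHEC}. Let $\pi:\Sigma\to M$ be the Hölder coding of $X$ from the coding theorem of \cref{secSPRCoding}. Since $X$ is SPR, $\Sigma$ is an irreducible SPR Markov shift and has a unique MME $\hat m$ with $\pi_*\hat m=m$.

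We first treat measures $\mu$ with $h(f,\mu)$ close to $h(f,m)$. For measures of small entropy the inequality is trivial: $|\Lambda^+(m)-\Lambda^+(\mu)|\le 2d\log\max(\|Df\|,\|Df^{-1}\|)$ is bounded, and $\sqrt{h(f,m)-h(f,\mu)}$ is bounded below on that range. Moreover, for ergodic $\mu$ of large entropy, the SPR property (via (EH)) makes $\mu$ $\chi$-hyperbolic. Item 3 of the coding theorem then lifts $\mu$ to some $\hat\mu\in\Prob(\Sigma)$ of the same entropy. For non-ergodic $\mu$ we plan to use the ergodic decomposition together with concavity of $t\mapsto\sqrt t$: the bound for ergodic components passes to averages, since $\int\sqrt{a}\le\sqrt{\int a}$. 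Ergodic components of low entropy are absorbed by the bounded case.

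\emph{Step 2: express $\Lambda^+$ as the integral of a Hölder potential.} We look for a function $\psi:\Sigma\to\RR$ such that $\Lambda^+(\pi_*\hat\nu)=\int\psi\,d\hat\nu$ for all $\hat\nu$ supported on $\Sigma^\#$. The natural choice is
\[
\psi(\underline x)=\log|\det Df|_{E^u(\underline x)}|,
\]
where $E^u(\underline x)$ is the unstable space read off from the symbol $\underline x$. In the Sarig/Ben Ovadia construction, $\underline x\mapsto E^u(\underline x)$ is Hölder, and $\psi$ is bounded because $Df$ is. The identity $\Lambda^+=\int\psi$ then follows from Oseledets' theorem, since the unstable dimension is constant on the class. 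We may need to pass to the one-sided shift $\Sigma^+$ by cohomology, which is standard for Hölder potentials.

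\emph{Step 3: apply the symbolic estimate.} The remaining claim is that on an irreducible SPR Markov shift with finite entropy, every bounded Hölder $\psi$ and every $\hat\nu$ satisfy
\[
\Bigl|\int\psi\,d\hat m-\int\psi\,d\hat\nu\Bigr|\le C\sqrt{h(\hat m)-h(\hat\nu)}.
\]
We would derive this from analyticity of the pressure. By the Cyr--Sarig spectral gap, $t\mapsto P(t\psi)$ is analytic near $0$, with $P'(0)=\int\psi\,d\hat m$ and $P''(0)=\sigma^2_\psi$ finite. The variational principle gives $h(\hat\nu)+t\int\psi\,d\hat\nu\le P(t\psi)\le h(\hat m)+t\int\psi\,d\hat m+Kt^2$ for $|t|\le t_0$. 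Setting $\delta=h(\hat m)-h(\hat\nu)$, this becomes $t\bigl(\int\psi\,d\hat\nu-\int\psi\,d\hat m\bigr)\le \delta+Kt^2$. Taking $t=\pm\sqrt{\delta/K}$ (allowed once $\delta$ is small) gives the bound with $C=2\sqrt K$.

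\emph{Main obstacle.} The hard step is Step 2, together with the uniformity of $K$ and $t_0$. Reducing to a one-sided Hölder potential of finite Gurevich pressure for which the SPR spectral gap persists along $t\psi$ requires checking that SPR is preserved under small perturbations of the potential. This is what the $\phi$-SPR theory of \cref{remSPRpot} is designed to handle. If the period of $\Sigma$ is greater than $1$, we first pass to the aperiodic components of $\sigma^{\per}$.
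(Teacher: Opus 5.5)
Your proposal is correct and follows the paper's route: code the SPR class by an irreducible SPR Markov shift, lift measures of large entropy, write $\Lambda^+$ as the integral of the bounded Hölder geometric potential $\log|\det Df|_{E^u}|$ on $\Sigma$, and apply the Rühr--Sarig effective intrinsic ergodicity estimate on the shift. Your derivation of that symbolic estimate from analyticity of $t\mapsto P(t\psi)$ plus the variational principle is a standard way to obtain it. The uniformity worry is moot, since only one fixed $\psi$ is involved. Likewise, SPR persists for $t\psi$ with $|t|$ small without invoking the $\phi$-SPR theory, because $\psi$ is bounded.
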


However we do not believe that one can control the individual exponents in general.

\subsection{Partially hyperbolic diffeomorphisms that are SPR}
It turns out that a number of previously studied classes of diffeomorphisms can in fact easily be seen to be SPR with all the consequences (sometimes known by other techniques). We give a two examples (though there are other interesting related works, e.g., \cite{Mongez-Pacifico-2024,marin-poletti-arxiv-2025}).

Let us recall the definition. We refer to \cite{Crovisier-Potrie-Cours} for details and background.

\begin{definition}\label{DefPH}
A diffeomorphism $f:M\to M$ is \emph{partially hyperbolic} if there exists an invariant splitting $TM=E^s\oplus E^c\oplus E^u$  such that: (i) $E^s$ is uniformly contracted; $E^u$ is uniformly expanded, and the splitting is dominated, i.e., there is $\lambda>0,C>1$ such that, for all $x\in M$, for all unit vectors $v^*\in E^*_x$:
 $$
   \forall n\ge0\;  Ce^{\lambda n} \|Df^n.v^s\| \leq \| Df^n.v^c\| \leq C^{-1} e^{-\lambda n}\|Df^n.
  $$
\end{definition}

If $E^c=0$, this is equivalent to Anosov. In general, their dynamics along $E^c$ is arbitrary so one needs to add some assumptions for a fruitful study.

\medbreak
\paragraph{Center fibration into circles.}
We first consider classes of partially hyperbolic $C^{1+}$-diffeomorphisms such that $E^c$ is one-dimensional and tangent to circles:

\begin{theorem}[F. Rodriguez-Hertz, J. Rodriguez-Hertz, Tahzibi, Ur\`es /  Tahzibi/ J. Yang]
Let $f$ be such a diffeomorphism of $\mathbb T^3$. Assume that it is accessible, i.e., for every pair of points $x,y\in \mathbb T^3$, there is a path which is everywhere tangent to $E^u\cup E^s$. Then:
 \begin{itemize}
  \item $\MME(f)$ is finite and either (a) contains a unique measure $m$ with $\lambda^c(m)=0$; or (b) contains more than one hyperbolic measure \cite{RRTU-compact};
  \item in case (b), there are $\chi,\delta>0$ such that every $\mu\in\Proberg(f)$ with $h(f,\mu)>\htop(f)-\delta$ is $\chi$-hyperbolic \cite{Tahzibi-Yang-TAMS-2019}.
 \end{itemize}
\end{theorem}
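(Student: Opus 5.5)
The plan is to reduce everything to the quotient Anosov dynamics along the center circles, and to use the invariance principle to relate MMEs and their center exponents.

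\emph{Step 1 (quotient).} Since $E^c$ is tangent to a circle fibration of $\mathbb T^3$, the leaf space is a $2$-torus. The map $f$ induces a homeomorphism $\bar f$ on it, and $\bar f$ is topologically conjugate to a linear Anosov automorphism $A$; write $\pi:\mathbb T^3\to\mathbb T^2$ for the resulting semiconjugacy. The fibers of $\pi$ are circles, on which the dynamics has zero entropy, so the Ledrappier--Walters formula gives $h(f,\mu)=h(A,\pi_*\mu)$ for every $\mu\in\Prob(f)$. In particular $\htop(f)=\htop(A)$, and every $m\in\MME(f)$ projects to Haar measure. Moreover, the one-dimensionality of the center gives entropy-expansiveness of $f$. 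Hence $\mu\mapsto h(f,\mu)$ is upper semicontinuous and $\MME(f)$ is a nonempty compact set.

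\emph{Step 2 (dichotomy and finiteness).} Take $m\in\MME(f)$ ergodic with $\lambda^c(m)=0$. Its projection is Haar measure, which has local product structure, so the Avila--Viana invariance principle shows that the disintegration of $m$ along center circles is both $s$- and $u$-invariant. Accessibility then upgrades this to a continuous disintegration invariant under all stable and unstable holonomies. Such a family is unique, which forces $m$ to be the only MME; this is case (a).

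Otherwise every ergodic MME has $\lambda^c\neq0$. For those with $\lambda^c<0$, I would show that the conditional measures on center leaves are atomic, with a uniformly bounded number of atoms, using $u$-invariance of the disintegration together with the contraction along center leaves. The case $\lambda^c>0$ is symmetric, applying the same argument to $f^{-1}$. A bounded number of atoms per leaf then bounds the number of ergodic MMEs of each sign. Finally I would show that both signs must occur. If all MMEs had, say, $\lambda^c<0$, then running the invariance principle for $f^{-1}$ on the semicontinuity limit of these measures should produce a contradiction. This yields case (b), with at least two hyperbolic MMEs.

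\emph{Step 3 (entropy hyperbolicity in case (b)).} Argue by contradiction. Suppose $\mu_n\in\Proberg(f)$ satisfy $h(f,\mu_n)\to\htop(f)$ and $\lambda^c(\mu_n)\to0$, and pass to a weak limit $\mu$. By the upper semicontinuity from Step 1, $\mu$ is a combination of MMEs. Because $E^c$ is continuous and one-dimensional, $\lambda^c(\nu)=\int\log\|Df|_{E^c}\|\,d\nu$ is weakly continuous, so $\lambda^c(\mu)=0$.

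This is the main obstacle. Since both signs occur in case (b), $\mu$ could be a mixture of MMEs with $\lambda^c>0$ and $\lambda^c<0$, so no contradiction follows directly. What is needed is a quantitative version of the invariance principle for the ergodic measures $\mu_n$ themselves. I would first try to estimate the fiber entropy of $\mu_n$ by $|\lambda^c(\mu_n)|$. The goal is to show that small $|\lambda^c(\mu_n)|$ together with nearly maximal entropy forces the center disintegrations of $\mu_n$ to be nearly $s$- and $u$-invariant. A limiting disintegration would then be holonomy invariant, and by accessibility it would produce a non-hyperbolic MME, contradicting case (b).
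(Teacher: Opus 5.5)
\textbf{Step 3 is not proved.} Your overall architecture matches the cited works: pass to the Anosov quotient, get $h(f,\mu)=h(A,\pi_*\mu)$ and upper semicontinuity, and use Avila--Viana plus accessibility for the zero-exponent alternative. But the second item is not established. You correctly isolate the obstacle in Step~3, and the remedy you propose cannot work as formulated, for two reasons:
\begin{itemize}
\item The ``fiber entropy'' $h(f,\mu_n)-h(A,\pi_*\mu_n)$ vanishes identically by your own Step~1, so bounding it by $|\lambda^c(\mu_n)|$ carries no information.
\item ``Nearly $s$- and $u$-invariant'' center disintegrations cannot be passed to a weak limit, since conditional measures do not depend continuously on the measure.
\end{itemize}

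The missing idea (in substance, Tahzibi--Yang's) is to encode $u$-invariance in an entropy that \emph{is} upper semicontinuous. Use the partial entropy $h^u(f,\mu)$ along the strong unstable foliation, and set $h^s(f,\mu):=h^u(f^{-1},\mu)$. You need three facts:
\begin{itemize}
\item[(i)] $h(f,\mu)\le h^u(f,\mu)+\max(\lambda^c(\mu),0)$ for ergodic $\mu$ (Ledrappier--Young; the center is one-dimensional).
\item[(ii)] $h^u(f,\mu)\le h(A,\pi_*\mu)$, with equality if and only if the center disintegration of $\mu$ is $u$-invariant. This is the entropy form of the invariance principle.
\item[(iii)] $\mu\mapsto h^u(f,\mu)$ is affine and upper semicontinuous (Hu--Hua--Wu, Yang).
\end{itemize}
Apply (i)--(iii) to both $f$ and $f^{-1}$. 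The assumptions $h(f,\mu_n)\to\htop(f)$ and $\lambda^c(\mu_n)\to0$ then give $h^u(f,\mu)=h^s(f,\mu)=\htop(f)$. Hence \emph{every} ergodic component of $\mu$ has an $s$- and $u$-invariant disintegration.

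This is exactly what defeats the mixture problem you raise. A hyperbolic ergodic MME has finitely-atomic center conditionals. The continuous version of an $su$-invariant family of such conditionals would produce a closed $su$-saturated proper subset of $\mathbb T^3$, contradicting accessibility.

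\textbf{Step 2 has two further unsupported inferences.}
\begin{itemize}
\item \emph{Finiteness.} A bound on the number of atoms of \emph{each} negative-exponent MME does not bound the number of such measures. You would need to control the total number of atoms of all of them on a center circle, for instance by a uniform separation between atoms of distinct measures. Nothing in your sketch provides this.
\item \emph{Both signs.} ``The invariance principle for $f^{-1}$'' cannot show that both signs occur. For an MME with $\lambda^c<0$ it only returns the $u$-invariance you already have, and a weak limit of such MMEs is again a combination of them, so no contradiction appears. You must instead exhibit an MME whose components cannot have $\lambda^c<0$, for instance one with $s$-invariant center disintegration. With the tools above, a maximizer of $h^s(f,\cdot)$ does this; its maximal value is $\htop(f)$ by the variational principle for partial entropy.
\end{itemize}

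\textbf{A smaller point in case (a).} Uniqueness of the holonomy-invariant family only rules out other MMEs once you know they also have $\lambda^c=0$. You should derive this from the rotation-extension structure before invoking uniqueness.
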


Item (2) was proved before the discovery of the SPR property using the so-called invariance principle of Avila and Viana \cite{Avila-Viana-IP}. It is now easy to apply \cref{thm-EHEC} and get:

\begin{corollary}
In the setting of the above theorem, $f$ is SPR if and only if it is in case (b).
\end{corollary}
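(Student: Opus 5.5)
We need to prove: in the setting (partially hyperbolic on $\mathbb T^3$, one-dimensional circle center, accessible), $f$ is SPR if and only if we are in case (b).

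Only if: suppose we are in case (a), so some $m\in\MME(f)$ has $\lambda^c(m)=0$. The exponents of $m$ are $\lambda^s<0=\lambda^c<\lambda^u$, so $m$ is not $\chi$-hyperbolic for any $\chi>0$. But SPR requires, for some $\chi_0>0$, every ergodic $\mu$ with $h(f,\mu)>h_0$ to give mass at least $m_0>0$ to the Pesin block $\Lambda(\chi_0,\epsilon,C_0)$. Points of such a block satisfy uniform exponential estimates on both sides of a splitting. By Oseledets uniqueness, an ergodic measure charging the block has no exponent in $(-\chi_0+\epsilon,\chi_0-\epsilon)$. Taking $\mu=m$, whose entropy $\htop(f)$ exceeds $h_0$, gives a contradiction. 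Alternatively one may cite \cref{thmSPRfinBernoulli}: each MME is carried by a Borel homoclinic class, hence is hyperbolic. This direction is routine.

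If: in case (b), we verify (EH) and (EC) for $X=M$ and apply \cref{thm-EHEC}; $f$ is $C^{1+}$ by assumption.

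For (EH), take $\mu_n\in\Proberg(f)$ with $h(f,\mu_n)\to\htop(f)$ and $\mu_n\to\mu$. By the Tahzibi--Yang item, for large $n$ each $\mu_n$ is $\chi$-hyperbolic, so $|\lambda^c(\mu_n)|>\chi$. We must pass this to $\mu$. Since $E^c$ is a continuous one-dimensional bundle, $\lambda^c(\nu)=\int\log\|Df|_{E^c}\|\,d\nu$ is weakly continuous on all of $\Prob(f)$. Hence $\lambda^c(\mu)=\lim\lambda^c(\mu_n)$, but this only controls the average. For a pointwise statement, recall that $f$ is $C^{1+}$ on a 3-manifold, hence $C^{1+}$-smooth but not necessarily $C^\infty$, so Newhouse upper semicontinuity is not directly available. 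However, entropy is upper semicontinuous for partially hyperbolic maps with one-dimensional center (they are entropy-expansive, Cowieson--Young / Díaz--Fisher--Pacifico--Vieitez). Therefore $h(f,\mu)=\htop(f)$, and almost every ergodic component of $\mu$ is an MME.

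In case (b), $\MME(f)$ is finite and every element is hyperbolic. So each ergodic component has $|\lambda^c|\ge\chi'$, where $\chi'>0$ is the minimum over the finitely many MMEs. The stable and unstable exponents are bounded away from $0$ uniformly by the domination. The index $i(\mu)$ is the same $\mu$-a.e.: this needs the components with $\lambda^c>0$ and those with $\lambda^c<0$ not to be mixed in the limit. I expect this index constancy to be the main subtle point. If mixing does occur, I would handle it by passing to a subsequence where the sign of $\lambda^c(\mu_n)$ is constant and using the continuity of $\lambda^c$ together with the Tahzibi--Yang gap to get components of one sign only. If that fails, I would instead use the per-class version of \cref{thm-EHEC}, applying (EH) and (EC) class by class and then \cref{propGlobalSPR}.

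For (EC), $\Lambda^+(\nu)=\lambda^u(\nu)+\max(\lambda^c(\nu),0)$. Both $\lambda^u$ and $\lambda^c$ are integrals of continuous functions, since $E^u$ and $E^c$ are continuous line bundles, so each is weakly continuous. By the sign control just established, $\max(\lambda^c,0)$ behaves linearly along the sequence, and $\Lambda^+(\mu_n)\to\Lambda^+(\mu)$ follows.
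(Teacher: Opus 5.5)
\paragraph{Where the proposal falls short.} Your route is the paper's: the text only says that, given the Tahzibi--Yang gap, ``it is now easy to apply'' \cref{thm-EHEC}. Your ``only if'' direction is fine: an MME with $\lambda^c=0$ gives zero mass to every Pesin block, or equivalently one can invoke \cref{thmSPRfinBernoulli}.

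The gap is exactly the point you flagged in (EH). The index $i(\mu)$ must be constant $\mu$-a.e. By upper semicontinuity of entropy, the ergodic components of the limit $\mu$ are MMEs, so this means all those MMEs must have center exponents of the same sign.

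Your first fix does not work. Passing to a subsequence where the sign of $\lambda^c(\mu_n)$ is constant, and using weak continuity of $\lambda^c$, only controls the average $\int\lambda^c\,d\mu=\lim_n\lambda^c(\mu_n)$. The Tahzibi--Yang gap concerns ergodic measures only, so it says nothing about $\mu$. Hence nothing excludes $\mu=t\,m^++(1-t)\,m^-$, where $m^\pm$ are hyperbolic MMEs with $\lambda^c(m^+)>0>\lambda^c(m^-)$ (in case (b) both signs do occur). The average center exponent of such a mixture can have either sign.

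Your fallback does not help either. \cref{propGlobalSPR} needs as input that all ergodic measures of nearly maximal entropy lie in finitely many Borel homoclinic classes, with an entropy gap outside them. That is essentially the SPR conclusion, and you have no independent way to get it here. Moreover, (EH) for a single class meets the same obstruction: measures in a class of unstable dimension $1$ may a priori converge to a measure charging MMEs of unstable dimension $2$.

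\paragraph{What is missing.} You need a genuine extra input that transports the sign of $\lambda^c$ to the limit. Soft facts such as continuity of $\lambda^c$ and finiteness of $\MME(f)$ are not enough. One plausible way to close it runs as follows.
\begin{enumerate}
\item If $\lambda^c(\mu_n)<0$, then $h(f,\mu_n)$ equals the partial entropy $h^u(\mu_n)$ along the strong unstable foliation (Ledrappier--Young).
\item $h^u$ is affine and upper semicontinuous \cite{Hu-Hua-Wu-2017,Yang-adv-2021}. Since also $h^u\le h\le\htop(f)$, almost every ergodic component $\nu$ of $\mu$ satisfies $h^u(\nu)=\htop(f)$.
\item An MME $\nu$ with $\lambda^c(\nu)>0$ and $h^u(\nu)=h(f,\nu)$ would have a center disintegration invariant under both stable and unstable holonomies. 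This is the invariance-principle mechanism behind \cite{Tahzibi-Yang-TAMS-2019}, and accessibility excludes it in case (b).
\item Case (b) has no MMEs with zero center exponent. So almost every component has $\lambda^c<0$, and the symmetric argument with $h^s$ handles the other sign.
\end{enumerate}
Once index constancy is settled, your (EC) argument goes through. With the paper's definition of $\Lambda^+$ through the averaged exponents $\lambda^i(f,\mu)$, (EC) is even automatic from weak continuity of $\lambda^u$ and $\lambda^c$.
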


\medbreak
\paragraph{Discretized Anosov Flows.}
We consider another class of partially hyperbolic diffeomorphism with non-compact one-dimensional center leaves that includes perturbations of time-$1$ map of Anosov flows. This class was introduced by Martinchich \cite{Martinchich-JMD-2023}.

\begin{theorem}[B-Crovisier-Poletti-Tahzibi (in preparation)]
Let $f$ be a $C^{1+}$-smooth discretized Anosov flow in the sense of \cite{Martinchich-JMD-2023}. Assume that the underlying flow is topologically transitive and not a suspension.
 \begin{itemize}
  \item either (a) $f$ has a single MME $m$ with $\lambda^c(m)=0$, or (b) it has exactly two MMEs, both hyperbolic, onbe with positive, the other with negative exponent.
  \item in case (b), $f$ is SPR.
 \end{itemize}
\end{theorem}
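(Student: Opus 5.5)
The natural route is to derive the statement from \cref{thm-EHEC}, taking $X=M$, so the whole task is to check (EH) and (EC). The dichotomy in the first item I would prove separately by the classical route for partially hyperbolic systems with one-dimensional center. First, since $\dim E^c=1$ and the center is uniformly dominated, the entropy map is upper semicontinuous. This follows from entropy expansiveness: the center leaves are one-dimensional and the dynamics along them has no entropy at small scale. Hence $\MME(f)\neq\emptyset$. Next I would classify ergodic MMEs by the sign of $\lambda^c$.

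\textbf{Center exponent zero.} If some MME has $\lambda^c=0$, then by an Avila--Viana invariance principle argument (as in \cite{Tahzibi-Yang-TAMS-2019}) it has $su$-invariant disintegrations along center leaves. Transitivity of the underlying flow and the non-suspension assumption give accessibility-type rigidity. From this I would deduce that the MME is unique and corresponds to the MME of the underlying Anosov flow. This is case (a).

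\textbf{Hyperbolic MMEs.} Otherwise every MME is hyperbolic. By \cref{CorLocalMME}, each Borel homoclinic class carries at most one MME. Using the transitivity of the flow, I would show that all measures with $\lambda^c>0$ are homoclinically related, and likewise all those with $\lambda^c<0$. The point is that $su$-paths, together with the flow direction, connect Pesin manifolds transversally. This gives at most two MMEs. To get exactly two, I would use a symmetry argument comparing $f$ and $f^{-1}$. If only positive-exponent MMEs existed, I would push a sequence toward a zero-exponent limit and derive a contradiction with the absence of case (a).

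\textbf{Verifying (EH) and (EC) in case (b).} Let $\mu_n$ be ergodic with $h(f,\mu_n)\to\htop(f)$. Upper semicontinuity gives that every limit $\mu$ is a combination of MMEs. In case (b) these are $m_+$ and $m_-$, both hyperbolic with $|\lambda^c|\ge\chi$, so $\mu$ is hyperbolic. For (EH) I need the index $i(\mu)$ to be constant $\mu$-a.e., which means ruling out genuine mixtures of $m_+$ and $m_-$ as limits. To do this I would first establish, as in the $\mathbb T^3$ case of \cite{Tahzibi-Yang-TAMS-2019}, that measures of nearly maximal entropy are uniformly $\chi$-hyperbolic in the center. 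I would argue by contradiction via the invariance principle: a sequence with $\lambda^c(\mu_n)\to0$ would produce in the limit a measure with $su$-invariant center disintegration, contradicting case (b). Once $|\lambda^c(\mu_n)|\ge\chi$ for large $n$, I split the sequence by the sign of $\lambda^c$.

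For the positive-sign subsequence, $\Lambda^+(\mu_n)=\int\log|\det Df|_{E^u\oplus E^c}|\,d\mu_n$, and this integrand is continuous because the bundle $E^u\oplus E^c$ is continuous. So $\Lambda^+$ converges along that subsequence. The measures of this subsequence lie in one homoclinic class, and upper semicontinuity of $\lambda^c$ then forces the limit to be $m_+$, which gives the constancy of the index. The negative-sign subsequence is handled symmetrically using $E^u$ alone. Mixed limits are impossible because each subsequence converges to a single MME.

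\textbf{Main obstacle.} I expect the hardest step to be this uniform center hyperbolicity near maximal entropy, i.e.\ showing that $\lambda^c(\mu_n)$ cannot approach $0$ in case (b). Here the non-compactness of center leaves of discretized Anosov flows complicates the invariance principle, and I would try to work in the quotient by the flow-like center action first.
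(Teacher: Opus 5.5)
Reducing the SPR claim to (EH) and (EC) through Theorem~\ref{thm-EHEC} is the route the paper indicates. You rightly observe that (EH) requires excluding weak limits $\mu=t\,m_++(1-t)\,m_-$ with $0<t<1$, since the index $i(\mu)$ differs on the two components. However, your mechanism for excluding them fails.

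\begin{itemize}
\item Since $E^c$ is a continuous one-dimensional invariant bundle, $\lambda^c(\nu)=\int\log\|Df|_{E^c}\|\,d\nu$ is weakly \emph{continuous}. So $\lambda^c(\mu_n)\ge\chi$ only yields $t\,\lambda^c(m_+)+(1-t)\,\lambda^c(m_-)\ge\chi$. This does not force $t=1$: all $t$ near $1$ satisfy it when $\chi<\lambda^c(m_+)$.
\item It does not help that the $\mu_n$ lie in the Borel homoclinic class of $m_+$. These classes are not weakly closed, and nothing in your argument prevents a limit of measures of one class from charging another class.
\item The same computation shows that the step you call the main obstacle targets the wrong thing. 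A sequence with $\lambda^c(\mu_n)\to0$ and $h(f,\mu_n)\to\htop(f)$ accumulates on the single mixture with $t\,\lambda^c(m_+)+(1-t)\,\lambda^c(m_-)=0$. An invariance-principle contradiction there therefore disposes of only one value of $t$.
\item Conversely, once mixtures are excluded, continuity gives $\lambda^c(\mu_n)\to\lambda^c(m_\pm)\neq0$, and the uniform bound on $|\lambda^c(\mu_n)|$ is automatic.
\end{itemize}

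What is missing is a semicontinuous quantity that remembers the sign of the center exponent, applied to each sign separately. One candidate is the entropy $h^s(f,\nu)$ along the strong stable foliation, i.e.\ the unstable entropy of $f^{-1}$ in the sense of Hu--Hua--Wu. It is upper semicontinuous in $\nu$. It equals $h(f,\nu)$ whenever $\lambda^c(\nu)\ge0$, because the center is then non-expanding for $f^{-1}$. Hence any limit of ergodic $\mu_n$ with $\lambda^c(\mu_n)\ge0$ and $h(f,\mu_n)\to\htop(f)$ has maximal $s$-entropy. This forces $t=1$ once you know that $m_-$ does not have maximal $s$-entropy. In your invariance-principle language, the counterpart would be $s$-invariance of the center disintegration of such limits, a property $m_-$ must be shown to lack.

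Statements of this kind ($m_+$, resp.\ $m_-$, is the only measure of maximal $s$-, resp.\ $u$-entropy) are also what your dichotomy argument is missing. To exclude that all MMEs have $\lambda^c>0$, you need an independent supply of maximal-entropy measures with $\lambda^c\le0$. Measures of maximal $u$-entropy are a natural choice; they are MMEs here because the center carries no entropy. You would then need a proof that, outside case (a), these cannot have $\lambda^c>0$. ``Pushing a sequence toward a zero-exponent limit'' presupposes such measures, and the symmetry $f\leftrightarrow f^{-1}$ alone gives nothing.
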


\section{Questions.}
We present a selection of questions and problems that we believe to be of interest.
 
\subsection{Abundance of SPR diffeos.}
One can be very bold and ask whether SPR might succeed where uniform hyperbolicity failed:

\begin{question}
Do the SPR diffeomorphism contain an open and dense (or generic?) subset of $\Diff^\infty(M^d)$?
\end{question}

This question is probably extremely hard in such generality, or maybe destined to provoke the reader into finding counter-examples. We would be interested by any case (preferably with smoothness at least $C^{1+}$) and expect to find already interesting questions in special classes of surface diffeomorphisms.

\medbreak

Observe that we do not claim that the SPR property is robust: we believe with S. Crovisier that one can construct a zero entropy counter example on any surface. However, the construction is fragile and might be accumulated by robustly SPR diffeomorphisms.

The case where $r=\infty$ and $d=2$ might be tempting in light of our main result \cref{thmSurfaceSPR}, so that we would "only" have to prove:

\begin{question}
Is  (robust) SPR  dense in the set of surface $C^\infty$-diffeomorphisms with zero entropy?
\end{question}

However this seems dangerously close to the $C^\infty$ version of the weak Palis Conjecture (only known in $C^1$, see \cite{Crovisier-ast}): {\sl are the Morse-Smale diffeomorphisms dense in $\{f\in\Diff^\infty(M^2):\htop(f)=0\}$~?}

For the case of finite $r>1$, we note that Burguet \cite{Burguet-Decomposition} has strengthened  Theorem~\ref{thmECLE} and shown that all surface $C^r$-diffeomorphisms with $\htop(f)>\lambda/r$ are also SPR. 

In the case of small entropy, we ask:

\begin{problem} 
Find a  $C^r$-diffeomorphism with $\htop(f)=\lambda/r>0$ for some $1\le r<\infty$ that is SPR but with infinitely many ergodic measures maximizing the entropy.
\end{problem}

\subsection{SPR without coding}
We rely extensively on the coding by Markov shifts for the proof of the consequences of SPR. 

\begin{problem}
How much is it possible to replace the symbolic dynamics by more geometric arguments?
\end{problem}

The (one-sided) SPR coding provides us with a transfer operator having a spectral gap. Intrinsic transfer operators have been developed for uniformly hyperbolic situations only recently a non-uniform setting such as SPR seems very difficult. But other approaches exist, at least for some of the properties. In particular, we ask:

\begin{question}
Can one prove, without using symbolic dynamics, that an SPR surface $C^{1+}$-diffeomorphism: has some   MME?  has only finitely many MMEs? 
Can one establish the equivalence with the continuity of the exponents of \cref{thmECLE-htop-surf}?
\end{question}

The other consequences seem more dependent on the existence of a spectral gap and we only know of a suitable transfer operator in the setting of one-sided Markov shift. Maybe the exponential mixing could still be obtained from more geometric arguments, such as the coupling technique, see \cite{Young-IJM,Dolgopyat-Mostly-2000}.

\subsection{SPR for SRB}

We have mentioned (\cref{remSPRpot}) that one can defined a completely parallel theory of Strong Positive Recurrence with respect to a potential function $\phi$ in order to study equilibrium states beyond measures maximizing the entropy. The more difficult question is to show that this new $\phi$-SPR property is satisfied.

Some cases are easy: if $f\in\Diff^\infty(M^2)$ and $\phi:M\to\mathbb R$ is continuous and has a unique equilibrium state $m$ which moreover has positive entropy, then it is not difficult to see that $p_\phi(f,\nu_k)\to \sup_\mu p_\phi(f,\mu)$ implies that $\nu_k$ converge to $m$ weakly and in entropy, hence in Lyapunov exponents. This implies the $\phi$-SPR property by an adaptation of \ref{thmECLE}.

However, the most interesting case is that of the Sinai-Ruelle-Bowen (or SRB) measures (see, e.g., \cite{Young-SRB-2002}). Those are equilibrium states but with respect to a measurable potential given by the unstable Jacobian: $J^u(x):=-\log|\det Df|E^+_x|$. This potential is \emph{not} continuous on $M$. Therefore the previous analysis breaks down. In fact, it is well-known that some ergodic, hyperbolic SRB measures are not exponentially mixing (see \cite{BCS-SPR} for a statement), hence cannot be $J^u$-SPR.

\begin{problem}
Find sufficient conditions for the SRB measures to be $J^u$-SPR. Is this common or rare? Can it be checked for H\'enon maps for ``good parameters'' using the deep analysis of that fundamental example?
\end{problem}

There are two well-known classes of partially hyperbolic diffeomorphisms (see \cref{DefPH})  for which this notion of $J^u$-SPR is very efficient.  
These classes are defined by requiring all their $u$-Gibbs states, i.e., their invariant measures $m^u$ absolutely continuous along the strong unstable foliation, to satisfy:

$\bullet$ (Mostly contracting) all the Lyapunov exponents of $m^u$ along $E^c$ are negative;

$\bullet$ (Mostly expanding) all the Lyapunov exponents of $m^u$ along $E^c$ are positive.

(These are modifiations of the classes introduced by \cite{Alves-Bonatti-Viana-2000,Bonatti-Viana-IJM-2000}, see \cite{andersson-vasquez-2018}).
Using \cite{Hu-Hua-Wu-2017,Yang-adv-2021} one can show that in both mostly contracting and mostly expanding diffeomorphosms are $J^u$-SPR.

\subsection{Stronger forms of SPR}
We have seen that SPR is powerful and satisfied by many systems. A natural question is whether these systems satisfy in fact an even stronger property.

\medbreak
\paragraph{Spectral decomposition and entropy at infinity}
We have shown in \cite{BCS-MME} that surface $C^r$-diffeomorphisms admit a \emph{spectral decomposition} generalizing that of Smale. In particular, if $H_1,H_2,\dots$ are the homoclinic classes of measures, we have seen that $\limsup_{n\to\infty} \hTOP(H_n)\leq\lambda/r$. This suggests:

\begin{question}
Do surface diffeomorphism satisfy the SPR property with an entropy threshold $h_0<\hTOP(f)$ independent of $\epsilon>0$? Can this threshold be taken arbitrarily close to $\lambda/r$?
\end{question}

Another point of view is to consider the case of Markov shifts. For those one can define an \emph{entropy at infinity} (see, e.g., \cite{Buzzi-AIF,Iommi-Todd-Velozo-2022}) which is the least possible upper bound on the entropy of invariant measures that give an arbitrarily small measure to an arbitr	arily large finite set of vertices. 

\begin{question}
Can one define an entropy at infinity for surface $C^r$-diffeomorphisms? Is it bounded by $\lambda/r$?
\end{question}

We note that D. Yang and C. Luo \cite{luo-yang-arxiv-2025} have shown another type of control for measures with small, positive entropy. They get a lower bound on the size of unstable manifolds for a set of lower bounded measure.

\paragraph{Periodic points and zeta functions}
We already mentioned Artin-Mazur zeta functions, see \eqref{eqZeta}. They have infinite meromorphic extension for uniformly hyperbolic diffeomorphsms and none at all for many surface $C^\infty$-diffeomorphisms by a construction of Kaloshin \cite{Kaloshin-zeta}. 

For irreducible SPR Markov shifts, the zeta function is likewise not always defined, since there may be arbitrary numbers of small loops very far in the graph.

However, Burguet \cite{Burguet-Periodic} has shown that for surface $C^\infty$-diffeomorphism this problem disappears if one restricts oneselves to the periodic orbits that are $\chi$-hyperbolic (i.e., hyperbolic with exponents bounded away from zero). Then:
 $$
   \lim_{n\to\infty} \frac1n\log\#\operatorname{Fix}^\chi_n(f) = \htop(f)
   \text{ where } \operatorname{Fix}^\chi_n(f):=\{x\in M:f^n(x)=x,\; \text{ is $\chi$-hyperbolic}\}.
  $$

\begin{question}
Let $f\in\Diff^\infty(M^2)$ with $\htop(f)>0$. Fix $0<\chi<\htop(f)$ and consider:
 $$
   \zeta^\chi(z) := \exp\left(\sum_{n\ge1} \frac1n\#\operatorname{Fix}^\chi_n(f) \right)
 $$
Does it have a meromorphic extension beyond its obvious radius of converge $|z|=\htop(f)$? Does the maximal radius of meromorphic extension have a dynamical meaning?
\end{question}

In the case of SPR Markov shift, there is a local zeta function whose meromorphic radius is lower bounded by the entropy at infinity.

\subsection{SPR for other settings}

\paragraph{Flows.}
Flows admit a natural notion of Pesin block (at least in the nonsingular case, one can write $TM=E\oplus\mathbb R.X\oplus F$ with $X$ the flow direction). Therefore one can write down a natural definition of a SPR flow.  \cite{Zang-arxiv-2025} has extended Burguet's decomposition from surface diffeomorphisms to $3$-dimensional flows.

\begin{problem}
Develop the analoguous SPR theory for $C^\infty$ nonsingular flows on $3$-dimensional flows. Decide which properties of SPR diffeomorphisms carry over to this new situation (probably not exponential mixing).
\end{problem}

\paragraph{Noncompact manifolds}
One can understand the SPR property as a \emph{tightness} property, controlling the escape of measure away from the parts of the dynamics with uniform hyperbolic estimates. It would seem natural to use such condition to control actual lack of compactness. 

\begin{problem}
Define and study an SPR property using compact Pesin blocks for diffeomorphisms (and flows) on noncompact manifolds.
\end{problem}

One of the motivation would be to interact with the theory of the geodesic flows on SPR manifolds that has been developed by several authors including Barbara Schapira, S. Tapie, and S. Gou\"ezel \cite{Gouezel-Schapira-Tapie-2023}.

\section*{Acknowledgments.}
I feel greatly indebted to Sylvain Crovisier and Omri Sarig for 10 years of a very stimulating collaboration. I thank Sylvain Crovisier for various discussions about this text.

\bibliographystyle{siamplain}
\bibliography{Buzzi-ICM_references}
\end{document}